\tikzset{ 
table/.style={
  matrix of nodes,
  nodes={rectangle,text width=1.75em,align=center},
  text depth=1.25ex,
  text height=2.5ex,
  nodes in empty cells
}
}
\newtheorem{theorem}{Theorem}[section]
\newtheorem{lemma}[theorem]{Lemma}
\newtheorem{corollary}[theorem]{Corollary}
\newtheorem{proposition}[theorem]{Proposition}
\newtheorem*{claim}{Claim}
\newenvironment{subproof}[1][\proofname]{%
  \begin{proof}[#1]%
}{%
  \end{proof}%
}
\newcommand{\dd}{\textquotedblleft}
\newcommand{\ee}{\textquotedblright}
\newcommand{\rank}{\operatorname{rank}}
\newcommand{\mac}{\mathcal}
\newcommand{\mab}{\mathbb}
\newcommand{\eq}{\equiv}
\newcommand{\vep}{\varepsilon}
\newcommand{\nin}{\notin}
\newcommand{\lr}{\left\lfloor}
\newcommand{\rr}{\right\rfloor}
\newcommand{\ero}{\mab{E}\rho}
\newcommand{\mco}{\max\rho}
\newcommand\rw{\operatorname{rw}}
\newcommand\cw{\operatorname{cw}}
\newcommand\nd{\operatorname{nd}}
\newcommand\mr{\operatorname{mr}}
\newcommand\cd{\operatorname{cd}}
\renewcommand{\subset}{\subseteq}
\newcommand\abs[1]{\lvert #1\rvert}
\begin{document}

\title{The average cut-rank of graphs}
\author{Huy-Tung Nguyen}
\address[Nguyen]{Department of Mathematical Sciences, KAIST, Daejeon, Korea.}
\email{huytung.nht@gmail.com}
\author{Sang-il Oum}
\thanks{Nguyen was supported by KAIST Undergraduate Research Participation (URP) program.
Oum was supported by the Institute for Basic Science (IBS-R029-C1).}
\address[Oum]{Discrete Mathematics Group,
  Institute for Basic Science (IBS), Daejeon, Korea.}
\address[Oum]{Department of Mathematical Sciences, KAIST, Daejeon, Korea.}
\email{sangil@ibs.re.kr}
\date\today
\begin{abstract}
	The cut-rank of a set $X$ of vertices in a graph $G$ is defined as the
	rank of the $ X \times (V(G)\setminus X)$
	matrix over the binary field whose $(i,j)$-entry is $1$ if the vertex $i$ in $X$ is adjacent to the vertex $j$ in $V(G)\setminus X$ and $0$ otherwise.
	We introduce the graph parameter called the \emph{average cut-rank}
	of a graph, defined as the expected value of the
	cut-rank of a random set of vertices.
  We show that this parameter does not increase when taking vertex-minors of graphs
  and a class of graphs has bounded average cut-rank if and only if 
  it has bounded neighborhood diversity.
  This allows us to deduce that for each real $\alpha$,
	the list of induced-subgraph-minimal graphs having average cut-rank larger than (or at least) $\alpha$ is finite.
	We further refine this by providing an upper bound on the size of obstruction and a lower bound on the number of obstructions for average cut-rank at most (or smaller than) $\alpha$ for each real $\alpha\ge0$.
	Finally, we describe explicitly all graphs of average cut-rank at most $3/2$ and determine up to $3/2$ all possible values that can be realized as the average cut-rank of some graph.
\end{abstract}
\maketitle

\section{Introduction}\label{sec:introduction}

The \emph{cut-rank function} of a graph $G$ is a function $\rho_G:2^{V(G)}\to\mab{Z}$ that maps every subset $X$ of $V(G)$ to the rank of
an $X\times (V(G)\setminus X)$ matrix over the binary field
whose rows are indexed by $X$
and columns are indexed by $V(G)\setminus X$
such that the $(i,j)$-entry is $1$ if and only if the vertex $i$ in $X$ is adjacent to the vertex $j$ in $V(G)\setminus X$.
Roughly speaking, $\rho_G(X)$ is small if the set of all edges between $X$ and $V(G)\setminus X$ form a simple structure to be described, though it could be dense.
The rank-width of a graph, introduced by Oum and Seymour~\cite{os2006}, uses 
the cut-rank function in its definition.

One of the most important properties of the cut-rank function is that it is preserved under the operation called local complementation. 
The \emph{local complementation} at a vertex $v$ of a graph~$G$
is an operation to obtain a new graph $G*v$ from $G$ by complementing in the neighborhood of $v$. In other words, for all pairs $x$, $y$ of neighbors of $v$, we delete $xy$ if $x$, $y$ are adjacent and add an edge $xy$ otherwise to obtain $G*v$.
A graph $H$ is a \emph{vertex-minor} of a graph $G$ if $H$ is an induced subgraph of a graph that can be obtained from $G$ by some sequence of local complementations.
Since local complementation preserves the cut-rank function~\cite{oum2005},  if $H$ is a vertex-minor of $G$ and $X\subseteq V(H)$, then 
$\rho_H(X)\le \rho_G(X)$. It follows that the class of graphs of rank-width at most $k$ is closed under taking vertex-minors~\cite{oum2005}.

It turns out that some of the theory developed for graph minors by Robertson and Seymour (see a survey of Lov\'asz~\cite{Lovasz2006}) can be generalized for vertex-minors. For instance,  Oum~\cite{oum2008} showed that graphs of bounded rank-width are well-quasi-ordered under the vertex-minor relation and conjectured that graphs are well-quasi-ordered under the vertex-minor relation. If true, every class of graphs closed under taking vertex-minors would be characterized by a finite list of forbidden vertex-minors.
This is why for each $k$, the class of graphs of rank-width at most $k$ is characterized by finitely many forbidden vertex-minors~\cite{oum2005}.
Now there are many interesting problems regarding vertex-minors of graphs and yet we have only a few graph parameters that do not increase by taking vertex-minors. We need more examples to develop the theory of graph structure with respect to vertex-minors.

We aim to introduce one such graph parameter, called the average cut-rank. 
The \emph{average cut-rank} of a graph $G$, denoted by $\ero(G)$, is the expectation of $\rho_G(X)$
for a uniformly chosen random subset $X$ of $V(G)$.
We will show that 
if $H$ is a vertex-minor of $G$, then $\ero(H)\le \ero(G)$.

Initially rank-width was introduced to study clique-width of a graph, introduced by Courcelle and Olariu~\cite{co2000}. Oum and Seymour~\cite{os2006} showed that 
\[\rw(G)\le \cw(G)\le 2^{\rw(G)+1}-1,\] 
where $\rw(G)$,
$\cw(G)$ denotes the rank-width, the clique-width of $G$, respectively.
For each $k$, the class of graphs of clique-width at most $k$ is closed under taking induced subgraphs but not under vertex-minors.
Thus, rank-width is `tied' to clique-width and yet it behaves better with vertex-minors than clique-width.

The average cut-rank also has such tied parameters. First let us describe a few graph parameters.
\begin{itemize}
  \item The \emph{neighborhood diversity} of a graph $G$, denoted by $\nd(G)$, is the number of  equivalence classes on $V(G)$ where two vertices $x$, $y$ are equivalent if and only if $x=y$ or $x$, $y$ are twins in $G$. This was introduced by  Lampis~\cite{lampis2012} in 2012
  and an equivalent concept appeared earlier in Ding and Kotlov~\cite{DK2006}.
  \item The \emph{maximum cut-rank} of a graph $G$ is  $\mco(G):=\max_{S\subseteq V(G)}\rho_G(S)$.
  \item For a field $\mathbb F$, the \emph{minimum rank} of an $n$-vertex graph $G$, denoted by $\mr(\mathbb F,G)$ is the minimum rank of an $n\times n$ symmetric matrix $A=(a_{ij})$ over $\mathbb F$ such that for all $i\neq j$, $a_{ij}\neq 0$ if and only if the $i$-th vertex is adjacent to the $j$-th vertex. Note that any element of $\mathbb F$ is allowed in the diagonal entry of $A$. For more about the minimum rank of a graph, readers are referred to a survey by Fallat and Hogben~\cite{FH2007}. We write $\mathbb F_q$ to denote the finite field with $q$ elements.
  \item The \emph{clique delta-cover number} of a graph $G$, denoted by $\cd(G)$, is the minimum integer $t$ such that there exist $t$ complete graphs $G_1$, $G_2$, $\ldots$, $G_t$ 
  with the property that $E(G)=E(G_1)\triangle E(G_2)\triangle \cdots \triangle E(G_t)$,
  where $\triangle$ denotes the symmetric difference operation.
\end{itemize}
We prove that all these parameters are tied to each other, when $\mathbb F$ is a finite field as follows. 
\begin{restatable}{theorem}{thmnd}\label{thm:nd}
  Let $G$ be a graph with at least one edge. Then
  \begin{enumerate}[\rm (i)]
    \item $\ero(G)< \mco(G)\le \mr(\mathbb F_2,G)\le \nd(G)<2^{2\mco(G)+2}\le 2^{8\ero(G)+2}$, 
    \item $\ero(G)<\cd(G)\le\frac{3}{2} \mr(\mathbb F_2,G)\le\frac{3}{2}\nd(G)\le \frac{3}{2} 2^{\cd(G)}$, and 
    \item $\nd(G)\le \abs{\mathbb F}^{\mr(\mathbb F,G)}\le \abs{\mathbb F}^{\nd(G)}$ for every finite field $\mathbb F$.
  \end{enumerate}
\end{restatable}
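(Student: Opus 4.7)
The plan is to prove the theorem by handling each of the inequalities in chains~(i)--(iii) in a dependency-friendly order. The routine interior links rely on standard submatrix and twin-class constructions; the bound $\cd(G)\le\tfrac{3}{2}\mr(\mathbb{F}_2,G)$ uses a symmetric $\mathbb{F}_2$-matrix decomposition; and the two nontrivial quantitative bounds---namely $\mco(G)\le 4\ero(G)$ and $\nd(G)<2^{2\mco(G)+2}$---require separate arguments.

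I first dispose of the interior links. The bound $\mco(G)\le\mr(\mathbb{F},G)$ is immediate because the $X\times(V(G)\setminus X)$ cut matrix is an off-diagonal submatrix of any symmetric matrix witnessing $\mr(\mathbb{F},G)$. For $\mr(\mathbb{F},G)\le\nd(G)$, I construct a symmetric matrix whose rows agree within each twin class by choosing $a_{ii}=0$ for false-twin classes and $a_{ii}=1$ for true-twin classes, obtaining rank at most $\nd(G)$. For $\nd(G)\le|\mathbb{F}|^{\mr(\mathbb{F},G)}$, any rank-$r$ representation has rows living in an $r$-dimensional $\mathbb{F}$-subspace (of size $|\mathbb{F}|^r$), and one verifies that two equal rows force their vertices to be twins. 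Finally, $\nd(G)\le 2^{\cd(G)}$ follows by labeling each vertex with its membership vector in $\{0,1\}^{\cd(G)}$ over the $\cd(G)$ cliques; two vertices sharing a label have the same parity of co-memberships with every third vertex, hence are twins.

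For $\cd(G)\le\tfrac{3}{2}\mr(\mathbb{F}_2,G)$, I use the classical fact that any rank-$r$ symmetric $\mathbb{F}_2$-matrix decomposes either as $\sum_{i=1}^{r} v_i v_i^T$ (non-alternating case, each $v_iv_i^T$ being the adjacency of a clique) or as $\sum_{i=1}^{r/2}(u_i v_i^T+v_i u_i^T)$ (alternating case; each rank-two summand equals $(u_i+v_i)(u_i+v_i)^T+u_i u_i^T+v_i v_i^T$ over $\mathbb{F}_2$, a symmetric difference of three cliques), giving $\cd(G)\le 3r/2$. The strict probabilistic inequalities are short: $\ero(G)<\mco(G)$ holds because $\rho_G(\emptyset)=0<\mco(G)$, and $\ero(G)<\cd(G)$ follows since the cut matrix of $X$ in $G$ is the $\mathbb{F}_2$-sum of cut matrices in the cliques $G_i$, giving $\rho_G(X)\le\sum_i\rho_{G_i}(X)$ with $\mathbb{E}[\rho_{G_i}(X)]=1-2^{1-|S_i|}<1$ for each nontrivial clique on $S_i$.

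The two remaining quantitative bounds form the core of the argument. For $\mco(G)\le 4\ero(G)$ (equivalent to $2^{2\mco(G)+2}\le 2^{8\ero(G)+2}$), I fix $X^*$ with $\rho_G(X^*)=k$ and pivot subsets $P\subseteq X^*$, $Q\subseteq V(G)\setminus X^*$ of size $k$ such that the $k\times k$ submatrix $M^*[P,Q]$ of the cut matrix is invertible. For uniformly random $X$, the submatrix $M^*[X\cap P,Q\setminus X]$ appears inside the cut matrix of $X$, so $\rho_G(X)\ge\rank M^*[X\cap P,Q\setminus X]$; conditioning on $X\cap P$, the rows in question are linearly independent (as rows of an invertible matrix), and a pivot-column argument inside $M^*[P,Q]$ gives the conditional expectation at least $|X\cap P|/2$; taking expectation over $|X\cap P|$ (mean $k/2$) yields $\ero(G)\ge k/4$. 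The main obstacle will be $\nd(G)<2^{2\mco(G)+2}$. I plan to pass to the twin-reduction of $G$---an induced subgraph preserving $\nd$ and not increasing $\mco$, and one verifies is twin-free because every distinguishing vertex for a pair of representatives lies in another twin class whose own representative also distinguishes them---then show that a twin-free graph with $\mco\le k$ has fewer than $2^{2k+2}$ vertices by iteratively refining a maximum cut-rank bipartition $(X^*,Y^*)$, whose cut matrix has at most $2^k$ distinct rows indexed by $X^*$ and $2^k$ distinct columns indexed by $Y^*$. The delicate part will be keeping the number of refinement rounds small enough to achieve the tight exponent $2k+2$ rather than a weaker bound that is polynomial in $|V(G)|$.
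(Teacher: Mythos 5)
Most of your chain agrees with the paper's proof: the interior links ($\mco\le\mr$ by a submatrix argument, $\mr\le\nd$ by choosing diagonal entries per twin class, $\nd\le|\mathbb F|^{\mr}$ by counting distinct rows, $\nd\le 2^{\cd}$ by membership vectors), the $\cd\le\tfrac32\mr(\mathbb F_2,G)$ decomposition of a symmetric $\mathbb F_2$-matrix into rank-one pieces, the strict bounds $\ero<\mco$ and $\ero<\cd$, and the bound $\mco\le 4\ero$ (your conditioning on $X\cap P$ is the same argument as the paper's fixing $X\subseteq A$ and noting that the columns of a full-rank submatrix remain independent after random restriction) are all essentially the paper's route.

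The genuine gap is the bound $\nd(G)<2^{2\mco(G)+2}$. You propose to pass to a twin-free induced subgraph and then ``iteratively refine a maximum cut-rank bipartition $(X^*,Y^*)$,'' but you give no recursion that actually closes. The natural recursion fails: after partitioning $X^*$ into its $\le 2^k$ row classes, two vertices in the same class must be distinguished by a vertex inside $X^*$, so you are forced to analyze $G[X^*]$ --- but $G[X^*]$ need not be twin-free, and applying the same estimate only yields $|X^*|\le 2^k\cdot\nd(G[X^*])$ with $\nd(G[X^*])$ bounded by the very quantity you are trying to control, so the inequality is vacuous and the ``number of refinement rounds'' is not even finite without additional ideas. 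You flag this yourself as ``the delicate part,'' but it is in fact the entire content of the lemma. The paper avoids iterated refinement altogether: starting from a maximal set $A$ without twin pairs, it labels each pair $\{u,v\}\subseteq A$ by a distinguishing vertex $w$, then uses a one-shot probabilistic deletion (random inclusion with $p=1/\sqrt{|A|}$, followed by removing one endpoint per ``bad'' triple) to produce a subset $T$ of size greater than $\sqrt{|A|}/2$ in which every pair's distinguishing vertex lies outside $T$. Then the rows of $A_G[T,V(G)\setminus T]$ are pairwise distinct, and the elementary bound that a rank-$r$ $0$-$1$ matrix has at most $2^r$ distinct rows gives $\sqrt{|A|}/2<2^{\mco(G)}$ directly. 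This single probabilistic step is what delivers the $2^{2\mco(G)+2}$ bound and is missing from your proposal; you would need to replace your refinement sketch with a comparable argument.
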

Ding and Kotlov~\cite[Lemma 2.3]{DK2006} showed that 
graphs of bounded neighborhood diversity are well-quasi-ordered under the induced subgraph relation.
Independently, Ganian, Hlin\v{e}n\'y, Ne\v{s}et\v{r}il, Obdr\v{z}\'{a}lek, and Ossona de Mendez~\cite{GHNOO2019}
showed that a class of graphs has bounded neighborhood diversity if and only if it has \emph{shrub-depth} $1$
and proved that every class of graphs of bounded shrub-depth
is well-quasi-ordered under the induced subgraph relation. 
Therefore we deduce the following corollary.
\begin{corollary}\label{cor:wqo}
  Every class of graphs of bounded average cut-rank
  is well-quasi-ordered under the induced subgraph relation.
\end{corollary}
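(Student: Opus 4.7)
The plan is to reduce the corollary to the known well-quasi-ordering theorem for bounded neighborhood diversity by applying the tie between $\ero$ and $\nd$ furnished by Theorem~\ref{thm:nd}. Suppose $\mathcal{C}$ is a class of graphs with $\ero(G)\le c$ for every $G\in\mathcal{C}$. I would first split off the (trivial) case of edgeless graphs: if $G$ has no edges then every vertex is a twin of every other vertex, so $\nd(G)\le 1$, and such graphs contribute nothing problematic to a well-quasi-ordering argument. For graphs with at least one edge, Theorem~\ref{thm:nd}(i) yields $\nd(G)<2^{8\ero(G)+2}\le 2^{8c+2}$, so every member of $\mathcal{C}$ has neighborhood diversity strictly less than $N:=2^{8c+2}$.

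Next I would invoke the known result that the class of graphs of neighborhood diversity at most $N$ is well-quasi-ordered under the induced subgraph relation. Two references mentioned in the introduction can be used: Ding and Kotlov~\cite{DK2006} prove this directly (Lemma~2.3), while Ganian, Hlin\v{e}n\'y, Ne\v{s}et\v{r}il, Obdr\v{z}\'alek, and Ossona de Mendez~\cite{GHNOO2019} show that bounded neighborhood diversity coincides with shrub-depth~$1$ and that every class of bounded shrub-depth is well-quasi-ordered under induced subgraphs. Either way, the subclass of $\mathcal{C}$ consisting of graphs with at least one edge is well-quasi-ordered.

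Finally, combining the two parts is routine: an infinite sequence $G_1,G_2,\dots$ in $\mathcal{C}$ either has infinitely many edgeless terms, in which case two of them (with the smaller one necessarily embedding into the larger one as induced subgraph) give the desired pair, or it has only finitely many edgeless terms and we apply the bounded-$\nd$ wqo to the cofinite tail. There is no real obstacle here; the entire content of the corollary has been pushed into the quantitative bound $\nd\le 2^{O(\ero)}$ proved inside Theorem~\ref{thm:nd}, together with a citation to existing wqo theorems. The only step that merits a brief remark in the write-up is the edgeless case, since Theorem~\ref{thm:nd} is stated under the hypothesis of having at least one edge.
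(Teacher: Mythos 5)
Your proof follows exactly the route the paper intends: use Theorem~\ref{thm:nd} to translate a bound on $\ero$ into a bound on $\nd$, then cite the known well-quasi-ordering results for bounded neighborhood diversity (Ding--Kotlov or Ganian et al.). The extra paragraph handling edgeless graphs is a careful touch since Theorem~\ref{thm:nd} assumes at least one edge, but this does not change the substance; the argument is the same as the paper's.
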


Note that for a vertex $v$ of $G$, $\ero(G- v)\le \ero(G)\le \ero(G-v)+1$.
Together with this easy fact, Corollary~\ref{cor:wqo} implies that
for each real $\alpha$,
there are only finitely many induced-subgraph-minimal graphs of average cut-rank at least $\alpha$ up to isomorphism,
because those graphs have average cut-rank at most $\alpha+1$.

We not only prove that there are finitely many of those graphs, but also provide an explicit upper bound on the number of vertices in each of them.
Let us write $\log$ to denote $\log_2$, the binary logarithm. For every real $x$, let $\lfloor x\rfloor$ be the greatest integer not exceeding $x$ and $\{x\}:=x-\lfloor x\rfloor$ be the fractional part of $x$. For $\vep\in[0,1)$, we define a sequence $\{x_n(\vep)\}_{n\ge0}$ by
\begin{align*} x_0(\vep)&=\max(\lfloor2-\log(1-\vep)\rfloor,5),\\
  x_{n}(\vep)&=2^{8n+10}\lfloor x_{n-1}(\vep)-\log (1-\{2^{x_{n-1}(\vep)}\vep/2\})+1\rfloor.
\end{align*}
It is not hard to see that $x_n(\vep)\ge 2^{\Omega(n^2)}$
where the constant factor in the exponential term depends on~$\vep$.
Now we are ready to present our second theorem.
\begin{restatable}{theorem}{sizeobstruction}\label{thm:main2}
  Let $\alpha\ge 0$ and $G$ be a graph with no isolated vertices.
  If $\ero(G)\ge \alpha$
  and $\ero(G-v)\le \alpha$ for all vertices $v$ of $G$, 
  then $\abs{G}< x_{\lfloor \alpha\rfloor}(\{\alpha\})$.
\end{restatable}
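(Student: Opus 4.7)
The plan is to induct on $n=\lfloor\alpha\rfloor$, writing $\varepsilon=\{\alpha\}$.

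\emph{Base case $n=0$.} For $\alpha=\varepsilon\in[0,1)$, I would classify the minimal obstructions directly. Since $\ero(G)\le\ero(G-v)+1\le\alpha+1\le 2$, Theorem~\ref{thm:nd}(i) strongly constrains $\mco(G)$ and $\nd(G)$, confining $G$ to a small family of graphs; the extremal case is the star $K_{1,k}$, for which $\ero(K_{1,k})=1-2^{-k}$, so minimality ($\ero(K_{1,k-1})\le\varepsilon$) forces $k\le 1+\log(1/(1-\varepsilon))$. A case analysis over the remaining small obstructions (small matchings, $P_4$-like trees, small disjoint unions of stars and edges) then yields $|V(G)|<x_0(\varepsilon)$, with the $\max(\cdot,5)$ in $x_0$ absorbing the finitely many low-$\varepsilon$ anomalies.

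\emph{Inductive step.} Assume the bound for all $\alpha'$ with $\lfloor\alpha'\rfloor<n$, and let $G$ be an obstruction at level $\alpha=n+\varepsilon$. The strategy is to bound $|V(G)|$ as $\nd(G)\cdot\max_i|V_i|$, where $V_1,\dots,V_t$ are the twin classes of $G$. Combining $\ero(G)\le\alpha+1$ with Theorem~\ref{thm:nd}(i) and using the integrality of $\mco$ and $\mr(\mathbb{F}_2,G)$ together with $\mco(G)\le\mco(G-v)+1$, I would refine $\nd(G)<2^{8\ero(G)+2}$ to $\nd(G)\le 2^{8n+10}$, which supplies the outer factor in $x_n(\varepsilon)$.

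For the maximum twin class size, I would fix a class $V_i$ and analyse the drop in $\ero$ when a single vertex of $V_i$ is removed. Because $V_i$ behaves star-like with respect to the quotient graph $\bar G$ on $\{V_1,\dots,V_t\}$, this drop is comparable to $2^{-|V_i|}$ scaled by a factor encoding $\bar G$, and minimality forces it to be at least $\ero(G)-\alpha$. To sharpen this into the required $\lfloor x_{n-1}(\varepsilon)-\log(1-\{2^{x_{n-1}(\varepsilon)}\varepsilon/2\})+1\rfloor$, I would identify an induced subgraph $H\subseteq G$ of size at most $x_{n-1}(\varepsilon)$ that, after a suitable modification, serves as an obstruction at a strictly smaller level $n-1+\varepsilon'$ to which the inductive hypothesis applies; since $\ero(H)$ is a dyadic rational with denominator at most $2^{x_{n-1}(\varepsilon)}$, the residual $\varepsilon'$ naturally arises as $\{2^{x_{n-1}(\varepsilon)}\varepsilon/2\}$, and the star-analysis correction $-\log(1-\varepsilon')$ from the base case supplies the additive term. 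Multiplying the twin-class count and the per-class bound then gives $|V(G)|<x_n(\varepsilon)$.

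The \emph{main obstacle} is this twin-class size bound: one must choose the auxiliary subgraph $H$ so that it is genuinely an obstruction at level $\lfloor\alpha'\rfloor=n-1$ (rather than some coarser level where the induction is weaker), and one must track the dyadic arithmetic precisely so that the residual fractional part of $\varepsilon$ collapses exactly to $\{2^{x_{n-1}(\varepsilon)}\varepsilon/2\}$. The underlying technical difficulty is the nonlinear dependence of $\ero(G)$ on a single twin class, which must be compared against the reciprocal-logarithmic correction familiar from the star analysis in the base case, while simultaneously respecting the integrality and dyadic denominator constraints that govern the arithmetic of $\ero$.
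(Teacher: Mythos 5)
Your high-level skeleton matches the paper's: fix $\vep=\{\alpha\}$, induct on $n=\lfloor\alpha\rfloor$, bound the number of twin classes via Theorem~\ref{thm:nd}, then bound the size of a large twin class using the inductive hypothesis. However, there are concrete gaps in how you propose to execute the two key steps.

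First, the base case needs no case analysis over a family of small graphs. The paper observes a dichotomy: either some $G-v$ has no isolated vertices, in which case Proposition~\ref{prop:avgsmallest} gives $\vep\ge\ero(G-v)\ge 1-2^{2-\abs G}$ and hence $\abs G\le\lfloor 2-\log(1-\vep)\rfloor$, or else deleting any vertex always creates an isolated vertex, which forces $E(G)$ to be a perfect matching, whence $\vep\ge\ero(G-v)=\tfrac{\abs G-1}{4}$ gives $\abs G<5$. Your appeal to $\mco$ and $\nd$ constraints is unnecessary and would be harder to make quantitative.

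Second — and this is the substantive gap — your mechanism for bounding the large twin class $C$ is off. You propose to find an auxiliary graph $H$ and apply the induction at a shifted level $n-1+\vep'$ with $\vep'=\{2^{x_{n-1}(\vep)}\vep/2\}$. But the induction hypothesis at level $n-1+\vep'$ would yield the bound $x_{n-1}(\vep')$, which is a completely different quantity from $x_{n-1}(\vep)$, and there is no reason for the two to be comparable. The induction in the paper is applied at level $(n-1)+\vep$ with the \emph{same} fractional part. The quantity $\{2^{x_{n-1}}\vep/2\}$ does not come from a modified level; it comes from dyadic arithmetic. Concretely: the paper locally complements $G$ so that the twin class $C$ becomes an attached star (this requires one or two complementations depending on whether $C$ is a clique or an independent set), deletes all of $C$ except one vertex to get a graph $H'-S$ satisfying $\ero(H'-S)>\vep+n-1$ (Proposition~\ref{prop:attachedstar}, lower direction), locates inside it an induced-subgraph-minimal graph $F$ with $\ero(F)>\vep+n-1$, applies the inductive hypothesis at level $\vep+(n-1)$ to conclude $\abs F<x_{n-1}(\vep)$, and then uses the fact that $\ero(F)$ is a dyadic rational with denominator dividing $2^{x_{n-1}-1}$ to sharpen the strict inequality to $\ero(F)\ge\vep+n-1+\frac{1-\{2^{x_{n-1}}\vep/2\}}{2^{x_{n-1}}/2}$. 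Feeding this back through the upper direction of Proposition~\ref{prop:attachedstar}, $\ero(H')\ge\ero(H'-S)+1-2^{1-\abs S}$, and using $\ero(H')=\ero(H)=\ero(G-z)\le\vep+n$, forces $\abs S$ to be small, contradicting $\abs C>\abs G/2^{8n+10}\ge x_n/2^{8n+10}$. Without the attached-star preparation and the two-sided use of Proposition~\ref{prop:attachedstar}, your sketch does not close; and without recognizing that the fractional part of the level is preserved under the induction, the recursion defining $x_n(\vep)$ cannot be recovered.

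Finally, your route to $\nd(G)\le 2^{8n+10}$ (via integrality of $\mco$ and $\mr$) is unnecessary: the paper simply uses $\nd(G)\le 2\nd(G-v)+1<\abs G$ to conclude some vertex has a twin, whence $\nd(G)=\nd(G-v)<2^{8\ero(G-v)+2}\le 2^{8(n+\vep)+2}<2^{8n+10}$.
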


Theorem~\ref{thm:main2} implies that
induced-subgraph-minimal graphs of average cut-rank at least $\alpha$
have bounded number of vertices for each $\alpha$.
Our third theorem shows that  the number of such graphs cannot be too small.
Indeed we prove a stronger statement in terms of vertex-minors.
Our third theorem says that
if we have a set $\mathcal S$ of graphs characterizing average
cut-rank at most $\alpha$ in terms of forbidding graphs in $\mathcal
S$ as a vertex-minor,
then $\abs{\mathcal S}$ cannot be too small.
We remark that $\mathcal S$ does not need to contain all vertex-minor-minimal graphs having average cut-rank more than $\alpha$, because if two graphs are locally equivalent (which we define in Section~\ref{sec:basic}), then $\mathcal S$ does not need to have both of them.

\begin{restatable}{theorem}{boundforforbiddenvertexminor}\label{thm:main4}
  There is some universal constant $c>0$ so that the following holds.
  For every $\vep\in [0,1)$ and $n\ge0$,
  let $\mathcal S$ be a set of graphs such that
  the average cut-rank of a graph $G$ is at most (or less than) $\vep+n$
  if and only if no graph in $\mathcal S$ is isomorphic to a vertex-minor of $G$.
  Then $\mathcal S$ contains at least $2^{cn\log (n+1)}$ graphs.

\end{restatable}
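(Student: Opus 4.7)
I would prove Theorem~\ref{thm:main4} by exhibiting an explicit family $\mathcal{F}$ of at least $2^{cn\log(n+1)}$ pairwise non-locally-equivalent graphs, each satisfying $\ero(G) > \vep + n$ and being vertex-minor-minimal with respect to this property (the ``less than'' variant of the statement is handled in the same way with $\ge$ in place of $>$). Given $\mathcal{F}$, the hypothesis on $\mathcal{S}$ forces, for every $G \in \mathcal{F}$, some vertex-minor of $G$ to belong to $\mathcal{S}$; by vertex-minor-minimality this vertex-minor is locally equivalent to $G$; and since members of $\mathcal{F}$ are pairwise non-locally-equivalent, the corresponding members of $\mathcal{S}$ are distinct, yielding $\abs{\mathcal{S}} \ge \abs{\mathcal{F}}$.

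The natural building blocks are disjoint unions of stars $G = K_{1,a_1} \sqcup \cdots \sqcup K_{1,a_t}$, since (i) $\ero$ is additive under disjoint union, so $\ero(G) = t - \sum_i 2^{-a_i}$ after verifying $\ero(K_{1,m}) = 1 - 2^{-m}$ (every nontrivial cut in a star has rank exactly one); (ii) the local-equivalence class of $K_{1,m}$ is exactly $\{K_{1,m}, K_{m+1}\}$, so a single vertex-deletion from a graph locally equivalent to $G$ either deletes a leaf (or a vertex of the complete-graph form), dropping $\ero$ by $2^{-a_i}$, or deletes a center, dropping $\ero$ by $1-2^{-a_i}$; and (iii) distinct multisets $(a_i)$ yield non-locally-equivalent graphs, since local complementations preserve the multiset of connected-component sizes. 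Consequently, vertex-minor-minimality of $G$ for the property $\ero > \vep + n$ reduces to checking single vertex-deletions, and so amounts to the pair of conditions $t - \sum_i 2^{-a_i} > \vep + n$ together with $2^{-\max_i a_i} \ge t - \sum_i 2^{-a_i} - \vep - n$ (since the smallest possible drop among all single vertex-deletions is $2^{-\max_i a_i}$).

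The main obstacle is the tight coupling between the slack $\mu := \ero(G) - \vep - n > 0$ and the constraint $\max_i a_i \le \log(1/\mu)$: a smaller slack enlarges the permissible range of entries $a_i$ but narrows the window $\sum_i 2^{-a_i} \in (t - \vep - n - \mu,\, t - \vep - n)$ into which the dyadic sum must land. To reach $2^{cn\log(n+1)}$ configurations, I would take $t = n + \Theta(1)$, set $L := \lfloor\log(1/\mu)\rfloor = \Theta(\log n)$ so that $a_i \in \{1, \ldots, L\}$, and count multisets whose rescaled sum $\sum_i 2^{L-a_i}$ equals a specific target integer of order $n \cdot 2^L = n^{1+\Theta(1)}$. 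Establishing that the number of such multisets is at least $2^{cn\log(n+1)}$ reduces to a restricted binary-partition counting estimate (bounding the number of ways to write a given integer as a multiset of $t$ powers of $2$ drawn from $\{1,2,\ldots,2^{L-1}\}$); this is the technical heart of the argument, and combined with the routine checks of feasibility and local-nonequivalence gives the desired lower bound.
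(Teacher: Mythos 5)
Your high-level framework matches the paper's: exhibit a large family of pairwise non-locally-equivalent vertex-minor-minimal graphs with $\ero>\vep+n$ and deduce that $\mathcal S$ must contain a distinct representative for each. Your reduction of vertex-minor-minimality to single vertex deletions, the formula $\ero(K_{1,m})=1-2^{-m}$, and the claim that distinct size multisets give non-locally-equivalent star forests are all correct. But the family you pick -- disjoint unions of stars -- is provably too small, so the ``technical heart'' you defer cannot be filled in. With $t=n+\Theta(1)$ components and $a_i\in\{1,\dots,L\}$ with $L=\Theta(\log n)$, the \emph{total} number of multisets of size $t$ from an $L$-letter alphabet is $\binom{t+L-1}{L-1}=2^{O(\log^2 n)}$, already far below $2^{cn\log(n+1)}$ before the sum constraint is imposed. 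And one cannot escape by enlarging $L$: any disjoint union of $t$ stars with $\vep+n<\ero(G)<\vep+n+1$ satisfies $t\le 2\ero(G)<2n+4$ (since $\sum 2^{-a_i}\le t/2$), and the minimality requirement $\ero(G)-\vep-n\le 2^{-\max a_i}$ pins $\sum 2^{-a_i}$ to a single dyadic value and (e.g.\ for $\vep=0$, $t=n+1$) forces $\sum 2^{\max a_i-a_i}=2^{\max a_i}-1$, which needs at least $\max a_i$ powers of $2$; hence $\max a_i\le t=O(n)$. So the entire family has at most $\binom{O(n)}{O(n)}=2^{O(n)}$ members -- exponentially short of $2^{cn\log(n+1)}$.

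The paper's construction $\mac{F}_{\vep+n}$ escapes this by using forests that are \emph{not} disjoint unions of stars: starting from $K_{1,\lfloor 1-\log(1-\vep)\rfloor}$, it alternates between a disjoint-union step $F\mapsto F+K_{1,q_1(F)}$ and an attachment step $F\mapsto F+_vK_{1,q_2}$ in which a new star is joined by an edge to a \emph{chosen} central vertex $v\in R(F)$. The freedom in the choice of $v$ (there are $2k+1$ options at stage $k$, by Lemma~\ref{lemma:diffweight}) is exactly what yields the factorial growth $\abs{\mac{F}_{\vep+2k}}=(2k-1)!!=2^{\Theta(k\log k)}$ in Corollary~\ref{cor:cardinality}. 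The nontrivial part is Lemma~\ref{lemma:attach}, which shows that the attachment $+_v K_{1,q_2}$ produces a graph in $\mac{L}_{\le\vep+n+1}$ -- one must rule out all elementary vertex-minors using Bouchet's characterization (Proposition~\ref{prop:elementary}), and the pivot and local-complementation cases make this substantially harder than the disjoint-star analysis you carried out. Your proposal in effect keeps only the disjoint-star steps, which offer $O(1)$ choices each, and so cannot produce the required super-exponential count.
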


Our final theorem characterizes graphs of average cut-rank at most $3/2$ completely
and determines all possible reals up to $3/2$ that can be realized as the average cut-rank
of some graph.
For two graphs $G$ and $H$, let $G+H$ be the \emph{disjoint union} of $G$ and $H$, and for an integer $m$, let $mG$ be the disjoint union of $m$ copies of $G$. For every $k\ge 0$, let $E_k$ be $K_{1,k+1}$ with one edge subdivided. 
\begin{restatable}{theorem}{smallgraphs}\label{thm:3/2}
  Let $G$ be a graph with no isolated vertices.
  Then $G$ has average cut-rank at most $3/2$
  if and only if it is 
  isomorphic to a vertex-minor of one of $P_5$, $3K_2$, $2P_3$, $K_{1,k+1}$, $K_2+K_{1,k+1}$, and $E_k$ for $k\ge 0$.
  In addition, the set of all possible values for average cut-rank of graphs in the interval $[0,3/2]$ is
\[ \left\{1-\frac{1}{2^k}:k\ge0\right\}\cup\left\{\frac{3}{2}-\frac{1}{2^{k+1}}:k\ge0\right\}\cup\left\{\frac{3}{2}-\frac{3}{2^{k+2}}:k\ge0\right\}\cup\left\{\frac{3}{2}\right\}. \]
\end{restatable}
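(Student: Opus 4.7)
The plan is to combine the additivity of $\ero$ over disjoint union with its monotonicity under vertex-minors, both established earlier, and carry out a case analysis on the component structure of $G$. The proof splits naturally into a forward direction (each listed graph satisfies $\ero\le 3/2$), a backward direction (every such $G$ is a vertex-minor of a listed graph), and a final enumeration of realized values.

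For the forward direction, I would compute $\ero$ directly. From $\ero(K_{1,k+1})=1-2^{-(k+1)}$ and additivity over components one gets $\ero(3K_2)=\ero(2P_3)=3/2$ and $\ero(K_2+K_{1,k+1})=3/2-2^{-(k+1)}$. A direct count over the $32$ subsets of $V(P_5)$ gives $\ero(P_5)=23/16$, and an induction on $k$ produces $\ero(E_k)=3/2-3\cdot 2^{-(k+2)}$. Hence all six families satisfy $\ero\le 3/2$, and every vertex-minor inherits this by monotonicity.

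For the backward direction, let $G$ have no isolated vertices and $\ero(G)\le 3/2$. Each non-trivial component has $\ero\ge 1/2$ with equality only for $K_2$, so $G$ has at most three components. For a connected $n$-vertex graph $C$ the estimate $\ero(C)\ge 1-2^{1-n}$ (since $\rho_C(S)=0$ only for $S=\emptyset$ or $S=V(C)$) further constrains component sizes. If $G$ has three components then $G=3K_2$. If $G$ has two components $C_1,C_2$ with $\ero(C_1)\le\ero(C_2)$, then $\ero(C_1)\le 3/4$ forces $\abs{V(C_1)}\le 3$, so $C_1$ is locally equivalent to $K_2$ or $P_3$, and the residual budget $\ero(C_2)\le 3/2-\ero(C_1)$ combined with a classification of connected graphs of small $\ero$ places $G$ inside $K_2+K_{1,k+1}$ or $2P_3$. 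If $G$ is connected, I further split on whether $\ero(G)\le 1$ or $1<\ero(G)\le 3/2$: in the first case, the connected graphs with $\ero\le 1$ turn out to be exactly those locally equivalent to a complete graph or a star, so $G$ is a vertex-minor of some $K_{1,k+1}$; in the second case, a detailed analysis using local complementations shows that every such $G$ is a vertex-minor of $P_5$ or some $E_k$.

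Finally, the set of realized $\ero$ values on $[0,3/2]$ is read off by enumeration over vertex-minors of each listed family: vertex-minors of $K_{1,k+1}$ contribute $\{1-1/2^k:k\ge 0\}$, those of $K_2+K_{1,k+1}$ contribute $\{3/2-1/2^{k+1}:k\ge 0\}$, those of $E_k$ contribute $\{3/2-3/2^{k+2}:k\ge 0\}$, and $3K_2,2P_3$ give $3/2$. The hardest step will be the connected sub-case with $1<\ero(G)\le 3/2$: one must systematically exclude every sporadic connected graph that is not already a vertex-minor of $P_5$ or of some $E_k$, which is done via explicit local-complementation analysis on the finitely many local-equivalence classes of small candidates.
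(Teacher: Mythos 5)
Your overall decomposition (additivity over components, monotonicity under vertex-minors, case analysis on component count, then a connected-case analysis split on $\ero\le 1$ versus $1<\ero\le 3/2$) matches the skeleton of the paper's proof, and the forward direction and disconnected cases are handled correctly. The classification of connected graphs with $\ero\le 1$ as locally equivalent to stars/complete graphs is indeed what the paper's Lemma~\ref{lemma:1} establishes (via Lemma~\ref{lem:cograph} and the fact that $\ero(P_4)=9/8>1$), and the realized-values enumeration at the end is sound once the classification is in hand.

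However, there is a genuine gap in your handling of the core step, the connected case with $1<\ero(G)\le 3/2$. You defer the entire argument to an ``explicit local-complementation analysis on the finitely many local-equivalence classes of small candidates,'' but the target graphs $E_k$ form an infinite family of unbounded size, so one cannot conclude anything by checking a finite set of small graphs. What is actually needed here is a \emph{structural} reduction, which is what the paper supplies. Concretely, the paper introduces $p(G)$, the maximum length of a path occurring as a vertex-minor, shows $p(G)\ge 3$ when $\ero(G)>1$ (Lemma~\ref{lem:cograph}), rules out $p(G)\ge 5$ (since a $P_6$ vertex-minor forces $\ero>3/2$), proves that $p(G)=4$ forces a vertex-minor among $\{C_5,P_6,P_{5,1},P_{5,2},C_{4,1},C_{3,1}\}$ each with $\ero>3/2$, and finally, when $p(G)=3$, analyzes how the other vertices attach to an induced $P_4$ to conclude $G$ is locally equivalent to some $E_k$ (or to $P_{4,1}$, which has $\ero=49/32>3/2$). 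This path-based argument, including the auxiliary Lemmas~\ref{lemma:five}--\ref{lem:fourandfive} for graphs on at most $5$ vertices, is the bulk of the proof of Lemma~\ref{lemma:3/2}, and it is precisely the step your proposal leaves out. To fix the proposal, you would need to supply such a structural bound (e.g.\ showing that a connected graph with $\ero\le 3/2$ has no long induced path after local complementation, and that the remaining vertices attach in a constrained way), rather than appealing to a finite search.
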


This paper is organized as follows.
In Section~\ref{sec:prelim} we recall basic definitions and results.
In Section~\ref{sec:equiv} we discuss an equivalence relation involving cut-rank functions.
We introduce and prove basic tools on the average cut-rank in Section~\ref{sec:basic}.
Sections~\ref{sec:wqo},~\ref{sec:upperbound},~\ref{sec:forbidden},~and~\ref{sec:3/2} present the proofs of Theorems~\ref{thm:nd},~\ref{thm:main2},~\ref{thm:main4},~and~\ref{thm:3/2} respectively.

\section{Preliminaries}\label{sec:prelim}
\subsection{Basic notions on graphs}
For all positive integers $k$,
let $P_k$ be the path on $k$ vertices, $C_k$ be the cycle on $k$ vertices,
$K_k$ be the complete graph on $k$ vertices, and
$K_{m,k}$ be the complete bipartite graph on $m$ vertices one side and $k$ vertices the other side. For the star $K_{1,k}$, we call the vertex at the singleton side the \emph{central vertex}. (If $k=1$ then we fix one vertex to be called central.)

For a graph $G$, denote $V(G),E(G),A(G)$, respectively, for its vertex set, edge set, and adjacency matrix. For disjoint sets $S,T\subseteq V(G)$, let $N_G(S,T)$ be the set of vertices in
 $T$ adjacent to at least one member in $S$.
For $v\in V(G)$, let $N_G(v,S):=N_G(\{v\},S)$ and let $N_G(v):=N_G(v,V(G))$ be the set of all  \emph{neighbors} of $v$ in $G$. Let 
 $d_G(v):=|N_G(v)|$ be the \emph{degree} of $v$ in $G$.
A vertex is \emph{isolated} if it has degree zero, and a \emph{leaf} if it has degree one.

Let $G[S]$ be the subgraph of $G$ induced on the vertex set $S$; in this case we say $G[S]$ is an \emph{induced subgraph} of $G$, and set $G-S:=G[V(G)\setminus S]$ as well as $G-v:=G-\{v\}$.
For any two disjoint subsets $X,Y$ of $G$, denote by $G[X,Y]$ the induced bipartite subgraph of $G$ with bipartition $(X,Y)$ consisting of edges having one end in $X$ and the other in $Y$.
For simplicity, set $|G|:=|V(G)|$, and we sometimes write $A_G$ instead of $A(G)$.

Let the \emph{complement} of $G$, denoted by $\overline{G}$, be the graph with vertex set $V(G)$ and edge set $\{uv:u\ne v,uv\nin E(G)\}$.

Two distinct vertices $x$, $y$ of $G$ are called \emph{twins} if
$N_G(x)\setminus\{x,y\}=N_G(y)\setminus\{x,y\}$.
If, in addition, they are adjacent then we call them \emph{true twins}, otherwise we call them \emph{false twins}.

In $G$, a subset $S\subseteq V(G)$ is a \emph{clique} if every two vertices in $S$ are adjacent, and an \emph{independent set} every two vertices in $S$ are nonadjacent.

For two disjoint subsets $A,B\subseteq V(G)$,
$A$ is \emph{complete} to $B$ if every vertex in $A$ is adjacent to all vertices of $B$
and
$A$ is \emph{anticomplete} to $B$ if every vertex in $A$ is nonadjacent to all vertices of $B$.

For two sets $A$ and $B$, let
$A\triangle B:=(A\setminus B)\cup(B\setminus A)$.
For two graphs $G_1$ and $G_2$,
let the \emph{symmetric difference} of $G_1$ and $G_2$,
denoted by $G_1\triangle G_2$, be the graph with vertex set $V(G_1)\cup V(G_2)$
and edge set $E(G_1)\triangle E(G_2)$.
When $E_1\cap E_2=\emptyset$ and $G=G_1\triangle G_2$
we say $G$ admits a \emph{decomposition} into $G_1$ and $G_2$.

For a subset $S$ of $V(G)$, \emph{identifying} $S$ is the operation of replacing all vertices in $S$ by a new vertex and joining it to every vertex in $N_G(S,V(G)\setminus S)$. For an equivalence relation $\equiv$ on $V(G)$, the \emph{quotient graph} of $G$ induced by $\equiv$ is the graph obtained from $G$ by identifying each equivalence class $C$ of $(V(G),\equiv)$ to a vertex denoted by $C$.

For two graphs $G_1$ and $G_2$, we say $G_1$ is \emph{isomorphic} to $G_2$,
if there is a bijection $\varphi:V(G_1)\to V(G_2)$ satisfying
for $u,v\in V(G)$, 
$\varphi(u)\varphi(v)$ is an edge of $G_2$ if and only if $uv$ is an edge of~$G_1$. 
\subsection{Local complementations and vertex-minors}
For a graph $G$ and its vertex $v$, let $G*v$ be the graph obtained from $G$ by switching all adjacencies between neighbors of~$v$. To be precise, two vertices $x$ and $y$ are adjacent in $G*v$ if and only if in $G$, either
\begin{enumerate}
\item they are adjacent and at least one of them is non-adjacent to $v$, or
\item they are nonadjacent but both are adjacent to $v$. 
\end{enumerate}
Indeed, $V(G*v)=V(G)$ and $G*v*v$ is $G$ itself for every $v\in V(G)$. We call such an operation the \emph{local complementation} at $v$. We say two graphs are \emph{locally equivalent} if one can be obtained from the other by a series of local complementations. 

We say that a graph $H$ is a \emph{vertex-minor} of $G$ if it can be obtained from $G$ by a series of local complementations and vertex deletions. A simple observation points out that given such a series, we may rearrange the operations so that all the local complementations are executed before the vertex deletions without changing the output graph. Thus, if $H$ is a vertex-minor of $G$, then $H$ is actually an induced subgraph of a graph locally equivalent to $G$.

For an edge $uv$ of $G$, the \emph{pivot} of $G$ on $uv$ is
an operation to obtain a graph, denoted by $G\wedge uv$, from $G$ by three local complementations, $G*u*v*u$.
This is well defined because $G*u*v*u=G*v*u*v$ whenever $u$, $v$ are adjacent, see~\cite[Proposition 2.1]{oum2005}.
\subsection{Cut-rank}
For a matrix $M:=(m_{ij}:i\in R, j\in C)$, let $\rank(M)$ be its rank. If $X\subseteq R$ and $Y\subseteq C$, denote by $M[X,Y]$ the submatrix of $M$ obtained by taking the rows indexed by $X$ and the columns indexed by $Y$ so that $M[X,Y]=(m_{ij}:i\in X,j\in Y)$.

For a graph $G$ and two disjoint subsets $X$ and $Y$,
let us write $\rho_G^*(X,Y)=\rank (A_G[X,Y])$
where $A_G$ is considered as a matrix over the binary field.
The \emph{cut-rank function} of a graph $G$
is a function $\rho_G:2^{V(G)}\to\mathbb Z$ such that  
$\rho_G(S):=\rho_G^*(S,V(G)\setminus S)$.
This implies immediately that $\rho_G$ is symmetric, that is, $\rho_G(S)=\rho_G(V(G)\setminus S)$ for all $S\subseteq V(G)$.

In this paper we need the following property of cut-rank functions, which shows that local complementations preserve the cut-rank function of a graph $G$.
\begin{proposition}[{{Oum~\cite[Proposition 2.6]{oum2005}}}]\label{prop:cutrank}For a graph $G$
and $v\in V(G)$,
we have $\rho_G(S)=\rho_{G*v}(S)$ for all $S\subseteq V(G)$.
\end{proposition}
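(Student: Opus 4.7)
The plan splits into three parts: sufficiency, necessity, and value-set enumeration.

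For sufficiency, I would compute $\ero$ of each of the six families and apply the fact that vertex-minors cannot increase $\ero$. Using the additivity $\ero(G_1 + G_2) = \ero(G_1) + \ero(G_2)$, which follows from the block-diagonal structure of the adjacency matrix of a disjoint union, direct counting yields $\ero(K_{1,k+1}) = 1 - 1/2^{k+1}$ (since every nontrivial bipartition of a star has cut-rank exactly $1$), hence $\ero(K_2 + K_{1,k+1}) = 3/2 - 1/2^{k+1}$. For $E_k$, a short case analysis on the three special vertices (center, subdivision vertex, and subdivided leaf) gives $\ero(E_k) = 3/2 - 3/2^{k+2}$. The values $\ero(3K_2) = 3/2$ and $\ero(2P_3) = 3/2$ follow from additivity, while $\ero(P_5) \le 3/2$ is verified by enumerating its $32$ subsets. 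All values are at most $3/2$, so all vertex-minors satisfy the same bound by Proposition~\ref{prop:cutrank}.

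For necessity, assume $G$ has no isolated vertex and $\ero(G) \le 3/2$. I would organize the analysis via the twin quotient: partition $V(G)$ into twin classes and consider the quotient graph $Q$ of size $\nd(G)$. Theorem~\ref{thm:nd} gives $\nd(G) < 2^{14}$, which is much too weak, but inspecting the six listed families shows $\nd \le 5$ in each case. I would therefore aim for a sharper bound: any twin-free induced subgraph of $G$ has at most five vertices. Given this, the proof proceeds in three steps: (a) prove the sharp bound on the twin-free core; (b) classify the quotients $Q$ with $\abs{Q} \le 5$ whose twin-blow-ups can satisfy $\ero \le 3/2$; (c) check that each such blow-up family is a vertex-minor of one of the six listed graphs.

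The main obstacle is step (a) together with tracking how $\ero$ behaves under twin-blow-ups in step (b). For step (a), I would argue by contradiction: in a twin-free graph on six or more vertices, every pair of vertices is separated by a third, and these separations can be combined into many bipartitions of cut-rank at least $2$, eventually pushing $\ero$ above $3/2$. For step (b), I would compute the contribution to $\ero$ of blowing up a given vertex of $Q$ into $k$ twins; most quotients give an unbounded contribution (so those blow-ups exceed $3/2$ for large $k$), and the only small quotients whose blow-ups remain bounded by $3/2$ are $P_3$ (leading to stars $K_{1,k+1}$), $K_2 + P_3$ (leading to $K_2 + K_{1,k+1}$), and $P_4$ (leading to $E_k$), together with the twin-free exceptional cases $P_5$, $2P_3$, $3K_2$.

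For the value set, once the classification is established, the realized values of $\ero$ are precisely the closed-form values computed in the sufficiency step: the three infinite families yield the three progressions $\{1 - 1/2^k\}$, $\{3/2 - 1/2^{k+1}\}$, $\{3/2 - 3/2^{k+2}\}$, and the finite graphs $3K_2$, $2P_3$ (together with their vertex-minors, which one verifies lie in the same progressions via local complementation at a star-center turning $K_{1,j}$ into $K_{j+1}$ and hence preserving $\ero$) realize the value $3/2$. Thus the value set is exactly as stated.
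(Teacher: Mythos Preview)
Your proposal addresses the wrong statement. The proposition in question is Proposition~\ref{prop:cutrank}: that local complementation preserves the cut-rank function, i.e.\ $\rho_G(S)=\rho_{G*v}(S)$ for every $S\subseteq V(G)$ and every $v\in V(G)$. Your write-up is instead an outline of a proof of Theorem~\ref{thm:3/2}, the classification of graphs of average cut-rank at most $3/2$. Indeed, you even \emph{invoke} Proposition~\ref{prop:cutrank} as a tool in your argument (``all vertex-minors satisfy the same bound by Proposition~\ref{prop:cutrank}''), which confirms you are not attempting to prove it.

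For the record, the paper does not prove Proposition~\ref{prop:cutrank} at all; it is quoted from Oum~\cite[Proposition~2.6]{oum2005} without argument. A correct proof would work directly with the matrix $A_G[S,V(G)\setminus S]$: applying local complementation at $v$ amounts to adding the row (or column) indexed by $v$ to certain other rows (or columns) of the adjacency matrix, and such elementary row and column operations over $\mathbb F_2$ do not change the rank of any off-diagonal block $A_G[S,V(G)\setminus S]$. None of the twin-quotient analysis, blow-up computations, or value-set enumeration in your proposal is relevant to this.
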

\subsection{Well-quasi-ordering and forbidden lists}
Given a set $\mac{X}$ and a relation $\le$ on $\mac{X}$,
$(\mac{X},\le)$ is a \emph{quasi-order} if
\begin{enumerate}[(i)]
\item for every $x\in \mac{X}$ we have $x\le x$;
\item for any $x,y,z\in \mac{X}$, if $x\le y$ and $y\le z$ then $x\le y\le z$.
\end{enumerate}
We say two elements $x,y$ of $\mac{X}$ are \emph{comparable} if $x\le y$ or $y\le x$.

We say 
$\le$ is a \emph{well-quasi-ordering} on $\mac{X}$, or $\mac{X}$ is \emph{well-quasi-ordered} under $\le$, or $(\mac{X},\le)$ is a \emph{well-quasi-order}, if for every infinite sequence $\{x_n\}_{n\ge0}$ of elements of $\mac{X}$, there are indices $i<j$ satisfying $x_i\le x_j$.

An \emph{antichain} is a subset of $\mac{X}$ having no two distinct comparable elements.
A subclass $S$ of $\mac{X}$ is \emph{closed} by $\le$ if $y\in S$ and $x\le y$ imply $x\in S$. An antichain $\mac{C}$ is called a \emph{forbidden list for $S$ by $\le$} if for all $x\in\mac{X}$, $x$ belongs to $S$ if and only if there is no $y\in\mac{C}$ satisfying $y\le x$. When $\mac{X}$ is a class of graphs, $\mac{X}$ is \emph{hereditary} if $\mac{X}$ is closed under induced subgraphs; that is, if $G\in\mac{X}$ and $H$ is isomorphic to an induced subgraph of $G$ then $H\in\mac{X}$.

\section{An equivalence relation involving cut-rank functions}\label{sec:equiv}
An \emph{attached star} in a graph $G$ is an induced subgraph isomorphic to a star whose noncentral vertices are leaves in $G$. In other words, an attached star in $G$ is an induced subgraph, say $G[S]$, isomorphic to a star such that the set of noncentral vertices is anticomplete to $V(G)\setminus S$. The \emph{size} of an attached star is the number of its vertices.

In $G$, let $\equiv_G$ be a binary relation on $V(G)$ such that for $x,y\in V(G)$, $x\equiv_G y$ if $\rho_G(\{x\})=\rho_G(\{y\})\ge\rho_G(\{x,y\})$. It is easy to see that $x\equiv_G y$ if and only if one of the following~holds:
\begin{enumerate}[(i)]
\item $x$ and $y$ are twins in $G$, or
\item one of them is a leaf in $G$ whose unique neighbor is the other.
\end{enumerate}
Furthermore, $\equiv_G$ is in fact an equivalence relation on $V(G)$, as shown by the following. 
\begin{proposition}\label{prop:equivcut}The relation $\equiv_G$ is an equivalence relation on $V(G)$. Moreover, each equivalence class of $(V(G),\equiv_G)$ is one of the following types in $G$: the vertex set of an attached star, a clique of true twins, and an independent set of false twins.
\end{proposition}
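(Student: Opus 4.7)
The plan is to first verify the combinatorial characterization claimed in the statement: for distinct $x,y\in V(G)$, $x\equiv_G y$ holds if and only if (i) $x$ and $y$ are twins in $G$, or (ii) one of them is a leaf whose unique neighbor is the other. Since $\rho_G(\{x\})$ equals $1$ when $x$ has a neighbor and $0$ otherwise, and $\rho_G(\{x,y\})$ is the $\mathbb{F}_2$-rank of the $2\times(\abs{G}-2)$ matrix whose rows are the indicator vectors of $N_G(x)\setminus\{y\}$ and $N_G(y)\setminus\{x\}$ on $V(G)\setminus\{x,y\}$, the condition $\rho_G(\{x\})=\rho_G(\{y\})\ge\rho_G(\{x,y\})$ forces these two rows either to both vanish (the leaf case) or to be equal and nonzero (the twin case), which is a direct unpacking.

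With the characterization in hand, I would fix a vertex $v$, describe explicitly the set $C_v:=\{w\in V(G):w\equiv_G v\}$, and show $C_v$ is of one of the three prescribed types. Transitivity, and hence the fact that $\equiv_G$ is an equivalence relation, will then follow because every pair of vertices inside an attached star, a clique of true twins, or an independent set of false twins is visibly $\equiv_G$-related, so $C_w=C_v$ whenever $w\in C_v$.

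The description of $C_v$ splits into three cases. If $v$ is isolated, then $C_v$ is exactly the set of all isolated vertices, an independent set of false twins. If $v$ has a leaf-neighbor or is itself a leaf, let $c$ be $v$ in the former case and the unique neighbor of $v$ in the latter; I would prove $C_v=\{c\}\cup L_c$, where $L_c$ is the set of leaves of $G$ adjacent to $c$ (noting that if $c$ is itself a leaf then $C_v=\{v,c\}$ is a $K_2$). The inclusion $\supseteq$ is immediate from the characterization, while for $\subseteq$ any hypothetical $z\in C_v\setminus(\{c\}\cup L_c)$ combined with an actual leaf of $c$ lying in $C_v$ forces a contradiction via either the twin relation (the leaf would have to neighbor $z$, collapsing to $z=c$) or the leaf relation; so $G[C_v]$ is an attached star. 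Finally, if $v$ has degree at least two and no leaf-neighbor, every $w\in C_v\setminus\{v\}$ must be a twin of $v$, and the key substep is to rule out the coexistence of a true twin $u$ and a false twin $w$ of $v$: combining $N_G(u)\setminus\{u,v\}=N_G(v)\setminus\{u,v\}$ with $N_G(w)=N_G(v)$ forces $uw\in E(G)$ (from $u\in N_G(v)=N_G(w)$) and simultaneously $uw\notin E(G)$ (from $w\notin N_G(v)$, while $N_G(u)=\{v\}\cup(N_G(v)\setminus\{u,v\})$), a contradiction. Hence all twins of $v$ are of the same parity, and one checks directly that any two true (resp.\ false) twins of $v$ are true (resp.\ false) twins of each other, yielding a clique of true twins or an independent set of false twins.

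The main bookkeeping obstacle I foresee is the exclusion, in the second case, of spurious vertices from the attached-star class without already knowing transitivity, together with the parity argument in the third case ruling out mixed true/false twins; both reduce to carefully tracking which vertices lie in which neighborhood after the twin or leaf relations are rewritten as set equalities.
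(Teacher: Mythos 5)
Your proof is correct, but the architecture is genuinely different from the paper's. The paper proves transitivity first, by a direct three-way case analysis on the degree of the middle vertex $y$ (where $x\equiv_G y$ and $y\equiv_G z$), and only afterwards classifies the equivalence classes: a class containing a leaf is shown to be an attached star, and a class with no leaf consists of pairwise twins and hence is a clique of true twins or an independent set of false twins. You instead start by describing the set $C_v$ of all vertices related to a fixed $v$ (before it is known to be an equivalence class), show directly that $C_v$ is one of the three prescribed types by splitting on whether $v$ is isolated, is a leaf or has a leaf-neighbor, or has degree at least two with no leaf-neighbor, and then deduce transitivity as a corollary: since any two vertices inside such a set are visibly $\equiv_G$-related, $C_v\subseteq C_w$ whenever $w\in C_v$, and applying the same classification to $w$ gives $C_w\subseteq C_v$, so the sets $C_v$ actually partition $V(G)$. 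The trade-off is that the paper's transitivity proof is self-contained and a bit shorter, whereas your route earns the structural description and the equivalence-relation property simultaneously, at the cost of the extra bookkeeping you already flag (excluding spurious members of $C_v$ in the attached-star case without assuming transitivity, and the parity argument ruling out a mix of true and false twins); both of those substeps check out as you sketch them. One small point worth making explicit when you write this up: the "every pair in $C_v$ is visibly related" observation gives only $C_v\subseteq C_w$ directly, and the reverse inclusion requires running the classification on $w$ too — this is what ultimately justifies writing $C_w=C_v$.
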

\begin{proof}By definition, it is obvious that for $x,y\in V(G)$, $x\equiv_G x$, and if $x\equiv_G y$, then $y\equiv_G x$. Thus $\equiv_G$ is reflexive and symmetric.
To prove that $\equiv_G$ is an equivalence relation on $V(G)$, it remains to show that $x\equiv_G y$ and $y\equiv_G z$ imply $x\equiv_G z$. We may assume that $x$, $y$, $z$ are distinct. 
We have three cases to consider.
\begin{enumerate}
\item $d_G(y)=0$. We have $x,y,z$ are isolated in $G$ and $x\equiv_Gz$.
\item $d_G(y)=1$. If $N_G(y)\not\subseteq\{x,z\}$ then trivially $x,y,z$ are leaves in $G$ with a unique common neighbor, so $x\equiv_Gz$. If $N_G(y)\subset\{x,z\}$, we may assume that $xy\in E(G)$ and $yz\nin E(G)$, then $y,z$ are twins in $G$ which implies that $z$ is a leaf in $G$ whose unique neighbor is $x$, and thus $x\equiv_Gz$.
\item $d_G(y)\ge2$. Now, 
if $x$ is a leaf in $G$, then $y$ is the unique neighbor of $x$ in $G$ and so $x$ is non-adjacent to $z$, which implies $z$ is a leaf whose unique neighbor is $y$ because $y\equiv_Gz$ and therefore $x\equiv_G z$.
By symmetry, if $z$ is a leaf in $G$, then $x\eq_G z$.
If neither $x$ nor $z$ is a leaf in $G$, then $\{x,y\}$ and $\{y,z\}$ are two pairs of twins in $G$, thus $x$ and $z$ are also twins in $G$, and so $x\equiv_Gz$.
\end{enumerate}

Now let $C$ be an equivalence class in $(V(G),\equiv_G)$.
If there is a vertex $x$ in $C$ which is a leaf in $G$ then its unique neighbor, say $y$, must also be in $C$;
so every vertex in $C\setminus\{x,y\}$, being a twin of~$x$,
is also a leaf in $G$ whose unique neighbor is $y$, which implies that $G[C]$ is an attached star in~$G$.
On the other hand, if $C$ contains no leaves in $G$, then necessarily they are pairwise twins.
It is well known that a set of pairwise twins is either a clique of true twins or an independent set of false twins in $G$.
So, we conclude that $C$ is the vertex set of an attached star, a clique of true twins, or an independent set of false twins.
\end{proof}

In addition, an immediate consequence of Proposition~\ref{prop:cutrank} is that local complementations preserve every equivalence class in $(V(G),\equiv_G)$.
\begin{corollary}\label{cor:equivlocal}For every $x,y,v\in V(G)$, $x\equiv_G y$ if and only if $x\equiv_{G*v}y$. In other words, the equivalence classes of $(V(G),\equiv_G)$ remain unchanged in $(V(G),\equiv_{G*v})$.
\end{corollary}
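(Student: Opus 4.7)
The plan is essentially to observe that the relation $\equiv_G$ is defined purely in terms of three values of the cut-rank function, namely $\rho_G(\{x\})$, $\rho_G(\{y\})$, and $\rho_G(\{x,y\})$. Since Proposition~\ref{prop:cutrank} says that local complementation at any vertex $v$ preserves $\rho_G(S)$ on every subset $S$, these three quantities are invariant under passing from $G$ to $G*v$. So the defining condition of $\equiv_G$ transports verbatim to $G*v$, and there is really nothing to compute.

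More concretely, the first step is to apply Proposition~\ref{prop:cutrank} three times, with $S=\{x\}$, $S=\{y\}$, and $S=\{x,y\}$, to obtain
\[
\rho_G(\{x\})=\rho_{G*v}(\{x\}),\quad \rho_G(\{y\})=\rho_{G*v}(\{y\}),\quad \rho_G(\{x,y\})=\rho_{G*v}(\{x,y\}).
\]
The second step is then to unwind the definition: $x\equiv_G y$ holds if and only if $\rho_G(\{x\})=\rho_G(\{y\})\ge \rho_G(\{x,y\})$, and by the displayed equalities this is equivalent to $\rho_{G*v}(\{x\})=\rho_{G*v}(\{y\})\ge \rho_{G*v}(\{x,y\})$, which is exactly $x\equiv_{G*v} y$.

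For the second sentence of the corollary, once the two relations $\equiv_G$ and $\equiv_{G*v}$ on the common vertex set $V(G)=V(G*v)$ are shown to coincide as sets of pairs, it is automatic that they induce the same partition into equivalence classes, so no further argument is needed.

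There is no real obstacle here; the only thing to be careful about is to verify that the definition of $\equiv_G$ genuinely depends only on cut-rank values (not, for instance, on adjacency information that could shift under local complementation), which is immediate from the definition given just above Proposition~\ref{prop:equivcut}. In particular, one does not need to go through the case analysis of Proposition~\ref{prop:equivcut} (attached star, true twins, false twins) at all; that case analysis might superficially seem relevant since local complementation can convert, e.g., false twins into true twins, but the structural type of an equivalence class is not part of the definition of $\equiv_G$, so such changes are irrelevant to the conclusion.
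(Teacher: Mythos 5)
Your proof is correct and follows exactly the paper's intended route: the paper states the corollary as ``an immediate consequence of Proposition~\ref{prop:cutrank}'' without writing out a proof, and you have simply spelled out why it is immediate—namely, that $\equiv_G$ is defined solely via $\rho_G$ on $\{x\}$, $\{y\}$, $\{x,y\}$, all of which are invariant under local complementation.
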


\section{Average cut-rank}\label{sec:basic}
The \emph{average cut-rank} of $G$ is defined as
\[
\ero(G):=\frac{1}{2^{|G|}}\sum_{S\subseteq V(G)}\rho_G(S).
\]
In other words, $\ero(G)$ is the expected value of $\rho_G(S)$ where $S$ is chosen uniformly at random among all subsets of $V(G)$. Note that due to the symmetry of $\rho_G$, $\ero(G)$ is a rational number whose denominator in closed form is a positive integer dividing $2^{|G|-1}$.

One of the reasons to study the average cut-rank is that it does not increase 
when taking vertex-minors. The following theorem not only shows this but also shows that the average cut-rank strictly decreases whenever we take a vertex-minor except for some trivial cases.
\begin{theorem}\label{thm:avgvertexminor}
  If $H$ is a vertex-minor of a graph $G$, then 
  \[\ero(H)\le \ero(G).\]
  In addition, if 
  $V(G)\setminus V(H)$ has at least one non-isolated vertex, 
  then 
  \[\ero(H)\le \ero(G)-2^{-\abs{H}}.\]
\end{theorem}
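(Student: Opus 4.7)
The plan is to reduce to the induced-subgraph case and then derive an averaged rank inequality. By Proposition~\ref{prop:cutrank}, local complementations preserve $\rho_G$ pointwise and hence preserve $\ero$, so after replacing $G$ by an appropriate locally equivalent graph we may assume that $H$ is an induced subgraph of $G$; set $X := V(G) \setminus V(H)$. Since local complementation at a vertex only alters edges among that vertex's neighborhood, a vertex isolated in a graph stays isolated after any local complementation, so the hypothesis that some vertex of $V(G) \setminus V(H)$ is non-isolated transfers to the reduced setting.

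The starting point is the single-deletion rank inequality
\[
  2\,\rho_{G-v}(S) \le \rho_G(S) + \rho_G(S \cup \{v\}) \qquad \text{for } v \in V(G),\; S \subseteq V(G) \setminus \{v\},
\]
which holds because the matrix defining $\rho_{G-v}(S)$ is obtained from each matrix on the right by deleting a single row or column, and rank does not increase when passing to a submatrix. Applying this iteratively as we strip the vertices of $X$ gives
\[
  2^{\abs{X}}\,\rho_H(S) \le \sum_{U \subseteq X} \rho_G(S \cup U) \qquad \text{for every } S \subseteq V(H).
\]
Summing over $S \subseteq V(H)$ and dividing by $2^{\abs{G}}$ yields $\ero(H) \le \ero(G)$.

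For the strict bound I would analyze the non-negative residual
\[
  \Delta \,:=\, \sum_{S \subseteq V(H)} \Bigl(\sum_{U \subseteq X} \rho_G(S \cup U) - 2^{\abs{X}}\rho_H(S)\Bigr) \,=\, 2^{\abs{G}}\bigl(\ero(G) - \ero(H)\bigr)
\]
and prove $\Delta \ge 2^{\abs{X}}$. Fix a vertex $v \in X$ non-isolated in $G$ and a neighbor $w$ of $v$. For the $S = \emptyset$ slice of $\Delta$, I would show $\sum_{U \subseteq X} \rho_G(U) \ge 2^{\abs{X}-1}$ by two cases: if $w \in V(H)$, every $U \subseteq X$ with $v \in U$ satisfies $\rho_G(U) \ge 1$ because the $(v,w)$ entry of $A_G[U, V(G) \setminus U]$ equals $1$; if $w \in X$, every $U \subseteq X$ with exactly one of $v$, $w$ in $U$ satisfies $\rho_G(U) \ge 1$ for the same reason. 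Each case contributes $2^{\abs{X}-1}$ subsets. By the symmetry $\rho_G(T) = \rho_G(V(G) \setminus T)$, the $S = V(H)$ slice reduces to the $S = \emptyset$ computation and contributes another $2^{\abs{X}-1}$. Adding these two slices gives $\Delta \ge 2^{\abs{X}}$, equivalently $\ero(H) \le \ero(G) - 2^{-\abs{H}}$.

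The step I expect to be the main obstacle is this strict bound. A naive one-vertex-at-a-time induction fails to deliver the right constant, since a vertex non-isolated in $G$ can become isolated once its neighbors in $X$ are peeled away, so the per-step drops $2^{-\abs{G_i}}$ need not accumulate to $2^{-\abs{H}}$ under an unfavorable deletion order. The global argument above bypasses this by extracting the savings from the two extreme slices $S = \emptyset$ and $S = V(H)$, whose contributions depend only on the existence of a single non-isolated vertex in $X$ and scale as $2^{\abs{X}-1}$ each.
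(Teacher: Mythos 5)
Your proposal is correct and follows essentially the same approach as the paper: both reduce to the induced-subgraph case via local complementations, both rely on the pointwise submatrix inequality (your iterated single-deletion bound is just an averaged, slightly weaker form of $\rho_G(T)\ge\rho_H(T\cap V(H))$), and both extract the additive $2^{-\abs{H}}$ saving by exhibiting $2^{\abs{G}-\abs{H}}$ sets $T$ with $T\cap V(H)\in\{\emptyset,V(H)\}$ on which $\rho_G(T)\ge 1$ while $\rho_H(T\cap V(H))=0$, split into the same two cases according to whether the chosen non-isolated vertex of $V(G)\setminus V(H)$ has a neighbor inside or outside $V(H)$. Your residual $\Delta$ and the two boundary slices $S=\emptyset$ and $S=V(H)$ are a clean repackaging of the paper's direct count over subsets contained in $V(G)\setminus V(H)$ or containing $V(H)$, not a genuinely different argument.
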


\begin{proof}Since $H$ is a vertex-minor of $G$, $H$ is an induced subgraph of some graph $G'$ which is locally equivalent to~$G$.
    By Proposition~\ref{prop:cutrank},
    $\ero(G)=\ero(G')$.
Because an isolated vertex remains isolated after each local complementation, we may assume that $H$ is an induced subgraph of $G$.
Let $S$ be a subset of $V(G)$ chosen uniformly at random.
  So $S\cap V(H)$ is a random subset of $V(H)$.
  We have
\[ \rho_G(S)=\rank(A_{G}[S,V(G)\setminus S])\ge \rank(A_{H}[S\cap V(H),V(H)\setminus S)=\rho_H(S\cap V(H)), \]
so $\ero(G)=\mab{E}_S\rho_{G}(S)\ge \mab{E}_{T}\rho_H(T)=\ero(H)$. 
  
Suppose that $V(G)\setminus V(H)$ has at least one non-isolated vertex. 
If there is some vertex in $V(G)\setminus V(H)$, say $v$, having at least one neighbor in $V(H)$, then any subset $S$ of $V(G)\setminus V(H)$ containing~$v$ or any subset $S$ of $V(G)$ containing $V(H)$ but not~$v$ satisfies $\rho_{G}(S)\ge 1$ while $\rho_H(S\cap V(H))=0$.

If no vertex in $V(G)\setminus V(H)$ has a neighbor in $V(H)$, then $G\setminus V(H)$ has at least one edge, say $uv$ for $u,v\in V(G)\setminus V(H)$. Then any subset $S$ of $V(G)$ such that $S$ contains only one of $u,v$ and $S\cap V(H)$ is $\emptyset$ or $V(H)$ satisfies $\rho_G(S)\ge1$ and $\rho_G(S\setminus U)=0$.

In both cases, we have
\[ \ero(G)\ge \ero(H)+\frac{2\cdot 2^{|G|-|H|-1}}{2^{|G|}}=\ero(H)+2^{-|H|}.\qedhere\]
\end{proof}

As an example, we compute the average cut-rank of complete graphs and complete bipartite graphs. We omit its easy proof.
\begin{lemma}\label{lem:avgcliques}For integers $m,k\ge 1$,
\[\ero(K_k)=1-2^{1-k},\quad \ero(K_{m,k})=\frac{(2^m-1)(2^k-1)}{2^{m+k-1}}.\]
In particular $\ero(K_{1,k})=1-2^{-k}=\ero(K_{k+1})$.
\end{lemma}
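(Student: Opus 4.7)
The plan is to compute $\rho_G(S)$ explicitly for each subset $S$ in the two cases and then take the average. Both cases rely on the simple observation that the cut matrix $A_G[S,V(G)\setminus S]$ has a very rigid block structure, which makes its $\mathbb{F}_2$-rank easy to read off.

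For $G=K_k$, I would first observe that for every $S\subseteq V(K_k)$ with $\emptyset\subsetneq S\subsetneq V(K_k)$, the matrix $A_G[S,V(G)\setminus S]$ is the all-ones matrix, hence has rank $1$; for the two remaining sets $S=\emptyset$ and $S=V(K_k)$ the cut-rank is $0$. Summing gives $\sum_S \rho_G(S)=2^k-2$, and dividing by $2^k$ yields $\ero(K_k)=1-2^{1-k}$.

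For $G=K_{m,k}$ with bipartition $(A,B)$, I would split an arbitrary $S\subseteq V(G)$ as $S=S_A\cup S_B$ with $S_A\subseteq A$, $S_B\subseteq B$. The cut matrix then decomposes, up to row and column permutations, into the direct sum of an $S_A\times(B\setminus S_B)$ all-ones block and an $S_B\times(A\setminus S_A)$ all-ones block, since the zero blocks within each part contribute no rank. Hence
\[\rho_G(S)=[S_A\neq\emptyset\text{ and }S_B\neq B]+[S_B\neq\emptyset\text{ and }S_A\neq A].\]
Counting the pairs $(S_A,S_B)$ satisfying each indicator gives $(2^m-1)(2^k-1)$ for each, for a total of $2(2^m-1)(2^k-1)$, and dividing by $2^{m+k}$ yields the claimed formula. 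The final identity $\ero(K_{1,k})=1-2^{-k}=\ero(K_{k+1})$ is then a direct substitution.

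There is no real obstacle here: the only thing to check carefully is that the two blocks in the $K_{m,k}$ cut matrix genuinely lie in disjoint rows and disjoint columns, so that their ranks add. Everything else is bookkeeping, which is why the excerpt omits the proof.
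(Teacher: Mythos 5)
Your proof is correct. The paper omits the proof of this lemma (``We omit its easy proof''), and your argument is precisely the direct calculation one would expect: for $K_k$ every nonempty proper $S$ gives an all-ones cut matrix of rank $1$, and for $K_{m,k}$ the cut matrix is a two-block anti-diagonal structure whose nonzero all-ones blocks occupy disjoint rows and columns, so the rank is the sum of the two indicator terms you wrote. The counting then gives the stated formulas, and the final identity follows by substitution.
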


The following result shows that $1-2^{1-|G|}$ is in fact the smallest possible average cut-rank of any graph $G$ with no isolated vertices. The equality holds if $G$ is a complete graph or a star, by Lemma~\ref{lem:avgcliques}.
\begin{proposition}\label{prop:avgsmallest}A graph $G$ without isolated vertices has average cut-rank at least $1-2^{1-|G|}$. The equality holds if and only if $G$ is a star or a complete graph.
\end{proposition}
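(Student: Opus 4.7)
The plan is to prove the bound and the equality characterization by splitting on whether $G$ is connected. The key auxiliary observation is that $\ero$ is additive over disjoint unions: if $G$ decomposes as $G_1 + G_2$ and $S = S_1 \cup S_2$ with $S_i \subseteq V(G_i)$, then the cut matrix $A_G[S, V(G)\setminus S]$ is block-diagonal, so $\rho_G(S) = \rho_{G_1}(S_1) + \rho_{G_2}(S_2)$, and averaging gives $\ero(G) = \ero(G_1) + \ero(G_2)$. This lets us reduce the disconnected case to the connected one.

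Suppose first that $G$ is connected. Since $G$ has no isolated vertices, every nonempty proper subset $S \subsetneq V(G)$ has at least one edge crossing to $V(G)\setminus S$, so $\rho_G(S) \ge 1$, while only the two trivial subsets $\emptyset$ and $V(G)$ give rank $0$. This yields
\[\ero(G) \ge \frac{2^{|G|}-2}{2^{|G|}} = 1 - 2^{1-|G|}.\]
Equality forces $\rho_G(S) = 1$ for every nontrivial $S$. Applying this to doubleton subsets $\{u,v\}$ together with $\rho_G(\{u\}) = \rho_G(\{v\}) = 1$ (no isolated vertices) gives $u \equiv_G v$ for all distinct $u, v$, so $V(G)$ is a single $\equiv_G$-equivalence class. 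Proposition~\ref{prop:equivcut} then forces $V(G)$ to be the vertex set of an attached star, a clique of true twins, or an independent set of false twins; the last possibility is incompatible with having no isolated vertices, so $G$ is a star or a complete graph. The converse direction---that stars and complete graphs attain the minimum---follows directly from Lemma~\ref{lem:avgcliques}, which records $\ero(K_n) = \ero(K_{1,n-1}) = 1 - 2^{1-n}$.

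For the disconnected case, write $G = G_1 + \cdots + G_c$ with $c \ge 2$ components, each of size at least $2$ since $G$ has no isolated vertices. Combining additivity with the connected-case bound on each $G_i$ yields
\[\ero(G) = \sum_{i=1}^{c}\ero(G_i) \ge \sum_{i=1}^{c}\bigl(1 - 2^{1-|G_i|}\bigr) \ge \frac{c}{2} \ge 1 > 1 - 2^{1-|G|},\]
so the inequality holds strictly and equality cannot arise when $G$ is disconnected.

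The only subtle step I anticipate is passing from the pairwise rank-one condition on cuts to a global structural description of $G$. Proposition~\ref{prop:equivcut} handles this cleanly: a single $\equiv_G$-equivalence class already pins down $G$ to be a star or a complete graph, so there is no need to argue separately about the cut-rank of larger subsets---that condition comes for free from Lemma~\ref{lem:avgcliques}, which closes both directions of the equivalence.
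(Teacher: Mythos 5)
Your proof is correct and the overall structure mirrors the paper's: the connected case counts the $2^{|G|}-2$ nonempty proper subsets with cut-rank at least $1$, and the equality analysis passes through the same chain $\rho_G(\{x\})=\rho_G(\{y\})=\rho_G(\{x,y\})=1 \Rightarrow x\equiv_G y \Rightarrow$ a single equivalence class $\Rightarrow$ Proposition~\ref{prop:equivcut} $\Rightarrow$ star or complete graph, with Lemma~\ref{lem:avgcliques} closing the converse. The one place you diverge is the disconnected case: the paper invokes Theorem~\ref{thm:avgvertexminor} together with the observation that a disconnected graph with no isolated vertices has an induced $2K_2$, hence $\ero(G)\ge\ero(2K_2)=1$; you instead re-derive additivity of $\ero$ over disjoint unions (block-diagonality of the cut matrix) and sum the connected-case bound over the $c\ge 2$ components, each of size at least $2$, getting $\ero(G)\ge c/2\ge 1$. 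Both routes are short and correct; the paper's leans on the already-established vertex-minor monotonicity, while yours self-contains the needed fact (which the paper formalizes slightly later, in Proposition~\ref{prop:avgdecompose}) and as a bonus makes explicit that disconnectedness forces $\ero(G)\ge 1$, which you then use to rule equality out strictly.
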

\begin{proof}If $G$ is connected, then for every nonempty proper subset $S$ of $V(G)$, $G[S,V(G)\setminus S]$ has at least one edge, hence $\rho_G(S)\ge1$. Because there are $2^{|G|}-2$ subsets $S$ of this type, we obtain
\[ \ero(G)=\frac{1}{2^{|G|}}\sum_{S\subseteq V(G)}\rho_G(S)\ge \frac{2^{|G|}-2}{2^{|G|}}=1-2^{1-|G|}. \]
If $G$ is disconnected, then since $G$ has no isolated vertices, $G$ contains an induced subgraph isomorphic to $2K_2$. It follows that, by Theorem~\ref{thm:avgvertexminor},
\[ \ero(G)\ge \ero(2K_2)=1>1-2^{1-|G|}. \]

Now we consider the equality case. The preceding argument shows that if $\ero(G)=1-2^{1-|G|}$, then $G$ is necessarily connected and $\rho_G(S)=1$ for all nonempty proper subsets $S$ of $V(G)$. In particular, it follows that for all $x,y\in V(G)$, we have $\rho_G(\{x\})=\rho_G(\{y\})=\rho_G(\{x,y\})=1$, or equivalently $x\equiv_G y$. Therefore, $(V(G),\equiv_G)$ has only one equivalence class, so Proposition~\ref{prop:equivcut} implies that $G$ is a star, a complete graph, or an edgeless graph. Because $G$ is connected, $G$ is thus a star or a complete graph. Lemma~\ref{lem:avgcliques} then completes the proof.
\end{proof}
Theorem~\ref{thm:avgvertexminor} provides a lower bound on $\ero(G)-\ero(H)$
when $H$ is a vertex-minor of $G$. 
The next proposition gives an upper bound on this difference.

\begin{proposition}\label{prop:avgdecompose}Let $G_1$ and $G_2$ be graphs and $G=G_1\triangle G_2$. Then
\[ \ero(G)\le \ero(G_1)+\ero(G_2), \]
and the equality holds if (but not necessarily only if) $V(G_1)\cap V(G_2)=\emptyset$, i.e. $G$ is the disjoint union of $G_1$ and $G_2$. In particular, for every vertex $v\in V(G)$,
\[ \ero(G-v)\ge \ero(G)-1+2^{-d_G(v)}\ge \ero(G)-1+2^{1-|G|}. \]
\end{proposition}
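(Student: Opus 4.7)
The plan is to view each adjacency matrix as a $V(G)\times V(G)$ matrix over $\mathbb{F}_2$ by padding with zeros outside its vertex set. Then by definition of symmetric difference of graphs we have $A_G=A_{G_1}+A_{G_2}$, so for every $S\subseteq V(G)$,
\[ A_G[S,V(G)\setminus S]=A_{G_1}[S,V(G)\setminus S]+A_{G_2}[S,V(G)\setminus S]. \]
Writing $S_i:=S\cap V(G_i)$ and noticing that the padded rows and columns of $A_{G_i}$ are zero, one obtains $\rank(A_{G_i}[S,V(G)\setminus S])=\rho_{G_i}(S_i)$. Subadditivity of rank then yields the pointwise inequality $\rho_G(S)\le\rho_{G_1}(S_1)+\rho_{G_2}(S_2)$.

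Next I would take expectations over a uniformly random $S\subseteq V(G)$. The key observation is that even though $S_1$ and $S_2$ need not be independent when $V(G_1)\cap V(G_2)\ne\emptyset$, the marginal distribution of each $S_i$ is still uniform on subsets of $V(G_i)$ (each vertex of $V(G_i)$ lies in $S$ with probability $1/2$ independently of the other vertices of $V(G_i)$). Linearity of expectation then gives $\ero(G)\le\ero(G_1)+\ero(G_2)$. For the equality case, when $V(G_1)\cap V(G_2)=\emptyset$, after reordering rows and columns the matrix $A_G[S,V(G)\setminus S]$ is block diagonal with blocks $A_{G_1}[S_1,V(G_1)\setminus S_1]$ and $A_{G_2}[S_2,V(G_2)\setminus S_2]$, so the rank inequality becomes an equality pointwise, and the expected values simply add.

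For the final statement, I would fix a vertex $v$ of $G$ and decompose $G=G_1\triangle G_2$, where $G_1=G-v$ and $G_2$ is the star with center $v$ and leaf set $N_G(v)$ (so $G_2\cong K_{1,d_G(v)}$, taking the convention that a single vertex has average cut-rank $0$ when $d_G(v)=0$). Since $E(G_1)$ and $E(G_2)$ partition $E(G)$ and $V(G_1)\cup V(G_2)=V(G)$, this is a valid decomposition. Applying the inequality just established together with $\ero(K_{1,k})=1-2^{-k}$ from Lemma~\ref{lem:avgcliques} gives
\[ \ero(G-v)\ge\ero(G)-\ero(K_{1,d_G(v)})=\ero(G)-1+2^{-d_G(v)}, \]
and the second inequality follows from $d_G(v)\le|G|-1$.

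The main obstacle is the bookkeeping in step one: one has to confirm that padding with zeros is compatible with the identification $V(G)=V(G_1)\cup V(G_2)$ and that the rank of the padded submatrix really equals $\rho_{G_i}(S_i)$ (in particular, that removing the all-zero rows and columns indexed by $V(G)\setminus V(G_i)$ does not change the rank). Once this is set up carefully, subadditivity of rank and linearity of expectation finish the argument without further difficulty.
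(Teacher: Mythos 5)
Your proof is correct and takes essentially the same approach as the paper's: you decompose $A_G[S,T]$ as a sum over $\mathbb F_2$, apply subadditivity of rank pointwise, and take expectations. The only cosmetic difference is in how the two vertex sets are reconciled: you pad the adjacency matrices with zero rows and columns so that all matrices live on $V(G)\times V(G)$ and then note that the marginal distribution of $S\cap V(G_i)$ is uniform, whereas the paper instead replaces $G_i$ by the graph $H_i$ on vertex set $V(G)$ obtained by adding isolated vertices and then appeals to Theorem~\ref{thm:avgvertexminor} (vertex-minor monotonicity, applied both ways) to conclude $\ero(H_i)=\ero(G_i)$. These are two phrasings of the same observation, and your explicit remark that $S_1$ and $S_2$ need not be independent but each is marginally uniform is exactly the point. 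The equality case via block-diagonal structure and the specialization $G_1=G-v$, $G_2\cong K_{1,d_G(v)}$ also match the paper; your added convention for $d_G(v)=0$ is harmless since $\ero$ of a single vertex is $0$, consistent with evaluating $1-2^{-k}$ at $k=0$.
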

\begin{proof}For $i=1,2$, let $H_i$ be the graph with vertex set $V(G)$ and edge set $E(G_i)$. Then $G=H_1\triangle H_2$ and by Theorem~\ref{thm:avgvertexminor}, $\ero(H_i)=\ero(G_i)$ for $i=1,2$.
Choose a subset $S$ of $V(G)$ uniformly at random and set $T:=V(G)\setminus S$. Then since $G=H_1\triangle H_2$, we have
\begin{align*}
\rho_G(S)=\rank(A_G[S,T])&=\rank(A_{H_1}[S,T]+A_{H_2}[S,T])\\
&\le \rank(A_{H_1}[S,T])+\rank(A_{H_2}[S,T])=\rho_{H_1}(S)+\rho_{H_2}(S). 
\end{align*}
This implies immediately that
\begin{equation*}
\ero(G)\le \ero(H_1)+\ero(H_2)=\ero(G_1)+\ero(G_2).
\end{equation*}
If $V(G_1)\cap V(G_2)=\emptyset$, for $i=1,2$ let $S_i$ be a random subset of $V(G_i)$ and let $T_i:=V(G_i)\setminus S_i$. Then $S_1\cup S_2$ is a random subset of $V(G)$ and $T_1\cup T_2=V(G)\setminus(S_1\cup S_2)$. It is easy to see that
\begin{equation*}
\rho_{G_1}(S_1)+\rho_{G_2}(S_2)=\rank(A_{G_1}[S_1,T_1])+\rank(A_{G_2}[S_2,T_2])=\rank(A_G[S_1\cup S_2,T_1\cup T_2]). 
\end{equation*}
As a result, we deduce $\ero(G)=\ero(G_1)+\ero(G_2)$.

Now for any vertex $v\in V(G)$, let $G_1:=G-v$ and $G_2:=G[\{v\},N_G(v)]$, which is isomorphic to $K_{1,d_G(v)}$. By Lemma~\ref{lem:avgcliques} we obtain 
\begin{equation*}
\ero(G)\le \ero(G-v)+\ero(K_{1,d_G(v)})=\ero(G-v)+1-2^{-d_G(v)}\le \ero(G-v)+1-2^{1-|G|}.\qedhere 
\end{equation*}
\end{proof}

It can be seen that the lower bound from Theorem~\ref{thm:avgvertexminor} and the upper bound from Proposition~\ref{prop:avgdecompose} are pretty far apart, because they apply for all graphs in general. In many cases, we need the upper and lower bounds on $\ero(G)-\ero(H)$ to be close enough, when $H$ is a vertex-minor of $G$. The next propositions provide a tighter upper bound compared to Proposition~\ref{prop:avgdecompose} and a tighter lower bound compared to Theorem~\ref{thm:avgvertexminor}, when we have distinctive structures involving false twins, in particular the attached stars.
\begin{proposition}\label{prop:falsetwins}Let $G$ be a graph in which $u_1,\ldots,u_k$ are pairwise false twins, where $k\ge1$. Let $d:=d_G(u_1)$. Then
\[
\ero(G-u_1)\ge\ero(G)-\frac{2^{d}-1}{2^{k+d-1}}.\]
In particular if $d_G(u_1)=1$ then
\[ \ero(G-u_1)\ge\ero(G)-2^{-k}. \]
\end{proposition}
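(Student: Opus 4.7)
The plan is to compute $\ero(G) - \ero(G - u_1)$ explicitly by exploiting the rigidity that the false-twin relation imposes on $A_G$, and then extract the factor $(2^d - 1)/2^{d-1}$ from a simple ``$\vec v = 0$'' observation.

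First I would dispose of the case $k = 1$: the claimed inequality becomes $\ero(G - u_1) \ge \ero(G) - (1 - 2^{-d})$, which follows directly from Proposition~\ref{prop:avgdecompose} applied to the vertex $u_1$ of degree $d$. So from now on assume $k \ge 2$. Let $U = \{u_1, \ldots, u_k\}$, $W = V(G) \setminus U$, and let $N \subseteq W$ be the common neighborhood of the $u_i$'s, so $|N| = d$. For $S \subseteq V(G)$, decompose $S = T \cup U_S$ with $T = S \cap W$ and $U_S = S \cap U$. Since the $u_i$'s are pairwise false twins, every row of $A_G$ indexed by $U$ equals $\mathbf{1}_N$, every column indexed by $U$ equals $\mathbf{1}_N$, and the $U \times U$ block vanishes. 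Setting $M = A_G[T, W \setminus T]$, $\vec v = \mathbf{1}_{N \cap T}$ and $\vec u = \mathbf{1}_{N \setminus T}$, this forces $\rho_G(T \cup U_S)$ to depend on $U_S$ only through whether $U_S$ equals $\emptyset$, $U$, or a proper nonempty subset of $U$, with respective values
\[
  r_0 := \rank[M \mid \vec v], \qquad r_k := \rank\begin{pmatrix} M \\ \vec u^T \end{pmatrix}, \qquad r_* := \rank\begin{pmatrix} M & \vec v \\ \vec u^T & 0 \end{pmatrix}.
\]
Applying the same analysis to $H = G - u_1$, in which $u_2, \ldots, u_k$ remain pairwise false twins, expresses $\rho_H(T \cup U_S')$ via the same three ranks. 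Conditioning on $T$ and summing over $U_S \subseteq U$ (resp. $U_S' \subseteq U \setminus \{u_1\}$) gives $\sum_{U_S} \rho_G = r_0 + r_k + (2^k - 2)r_*$ and $\sum_{U_S'} \rho_H = r_0 + r_k + (2^{k-1} - 2)r_*$, and a short cancellation yields the clean identity
\[
  \ero(G) - \ero(G - u_1) = \frac{1}{2^{|G|}} \sum_{T \subseteq W}(2 r_* - r_0 - r_k).
\]

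The last step is a tail bound. Write $2 r_* - r_0 - r_k = \alpha(T) + \beta(T)$ with $\alpha = r_* - r_k \in \{0,1\}$ and $\beta = r_* - r_0 \in \{0,1\}$. If $N \cap T = \emptyset$ then $\vec v = 0$, so the last column of the $r_*$-matrix is zero, hence $r_* = r_k$ and $\alpha(T) = 0$. Via the involution $T \leftrightarrow W \setminus T$, which transposes the $r_*$-matrix and swaps $r_0 \leftrightarrow r_k$, the condition $N \subseteq T$ similarly forces $\beta(T) = 0$. Consequently $\sum_T \alpha(T) \le 2^{|W|} - 2^{|W| - d}$ and the same bound holds for $\sum_T \beta(T)$; dividing $\sum_T(\alpha + \beta)$ by $2^{|G|} = 2^{|W| + k}$ gives exactly $(2^d - 1)/2^{k + d - 1}$, as required. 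The ``in particular'' assertion is just the case $d = 1$.

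The main obstacle is the bookkeeping at the middle step: when $U_S$ is a proper nonempty subset of $U$, both $|U_S|$ duplicate rows and $|U \setminus U_S|$ duplicate columns arise in $A_G[S, V(G) \setminus S]$, and one must carefully collapse them while preserving the zero block at their intersection (coming from the pairwise non-adjacency of the $u_i$'s); only after this collapse does the value $r_*$ emerge cleanly, and one must also verify that the same identity still holds in the boundary case $k = 2$, where the ``proper nonempty'' contribution on the $H$-side is empty but the coefficient $(2^{k-1}-2)$ correctly evaluates to $0$. Once the identity for $\ero(G) - \ero(G - u_1)$ is in hand, the zero-vector tail bound closes the argument immediately.
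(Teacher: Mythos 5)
Your argument is correct, but it takes a genuinely different route from the paper's. The paper observes that whenever $u_1\in S$ and either another twin $u_j$ lies in $S$ or $N_G(u_1)\subseteq S$, one has $\rho_G(S)=\rho_{G-u_1}(S\setminus\{u_1\})$ (duplicate row, respectively zero row), combines this with the crude bound $\rho_G(S)\le\rho_{G-u_1}(S\setminus\{u_1\})+1$ for the remaining $S$, and simply counts the ``bad'' $S$ with $u_1\in S$ using the symmetry $\rho_G(S)=\rho_G(V\setminus S)$. You instead quotient out the false-twin symmetry first: for fixed $T=S\cap W$ you identify the three possible cut-ranks $r_0,r_k,r_*$ according to whether $S\cap U$ is empty, full, or proper nonempty, and derive the exact identity
\[\ero(G)-\ero(G-u_1)=\frac{1}{2^{|G|}}\sum_{T\subseteq W}\bigl(2r_*-r_0-r_k\bigr),\]
which you then bound termwise via $\alpha(T)=r_*-r_k\in\{0,1\}$, $\beta(T)=r_*-r_0\in\{0,1\}$, the vanishing of $\alpha$ when $N\cap T=\emptyset$, and the involution $T\mapsto W\setminus T$ that transposes the $r_*$-matrix and swaps $\alpha\leftrightarrow\beta$. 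What your approach buys is an explicit formula for the deficit rather than a one-sided inequality, and a cleaner account of exactly when equality can fail; what the paper's approach buys is brevity and uniformity in $k$ (it does not need the separate $k=1$ case, which you correctly deflect to Proposition~\ref{prop:avgdecompose} since the collapse-to-$r_*$ analysis for $H$ degenerates there). Both arguments are, at bottom, counting the same set of ``bad'' configurations $2^{n-k-d}(2^d-1)$; yours just arrives at the count via $2\bigl(2^{|W|}-2^{|W|-d}\bigr)$ with the factor $2$ supplied by the involution rather than by $\rho_G$-symmetry.
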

\begin{proof}Let $n:=|G|$, $H:=G-u_1$, $V:=V(G)$, and
\[ \mac{F}:=\{S\subseteq V:\{u_1\}\subsetneq S\cap\{u_1,\ldots,u_k\}\text{ or }\{u_1\}\cup N_G(u_1)\subseteq S\}. \]
Observe that for $S\in\mac{F}$, we have $\rho_G(S)=\rho_H(S\setminus\{u_1\})$, because if $\{u_1\}\subsetneq S\cap\{u_1,\ldots,u_k\}$ then there is some $j\in\{2,\ldots,k\}$ such that $u_j\in S$ and so the row vectors corresponding to $u_1$ and $u_j$ in $A_G[S,V\setminus S]$ are the same, and if $u_1\cup N_G(u_1)\subseteq S$ then the row vector corresponding to $u_1$ in $A_G[S,V\setminus S]$ is zero. Obviously, $\rho_G(S)\le \rho_H(S\setminus\{u_1\})+1$ for all $S\subseteq V$. Therefore, because $\rho_G$ is symmetric and there are exactly $ 2^{n-d-k}(2^{d}-1) $ subsets $S$ of $V$ such that $u_1\in S\not\in\mac{F}$, we have
\[
\begin{aligned}
	\ero(G)&=\mab{E}(\rho_G(S)\mid u_1\in S\subset V)\\
	&=\mab{E}(\rho_G(S)\mid u_1\in S\in\mac{F})\cdot\mab{P}(S\in\mac{F}\mid u_1\in S)+\mab{E}(\rho_G(S)\mid u_1\in S\not\in \mac{F})\cdot\mab{P}(S\not\in\mac{F}\mid u_1\in S)\\
	&
	\le\mab{E}(\rho_H(S\setminus\{u_1\})\mid u_1\in S\in\mac{F})\cdot\mab{P}(S\in\mac{F}\mid u_1\in S)\\ 
	&\quad+\mab{E}(\rho_H(S\setminus\{u_1\})+1\mid u_1\in S\not\in \mac{F})\cdot\mab{P}(S\not\in\mac{F}\mid u_1\in S) 
	\\
	&
	\le\mab{E}(\rho_H(S\setminus\{u_1\})\mid u_1\in S\in\mac{F})\cdot\mab{P}(S\in\mac{F}\mid u_1\in S)\\ 
	&\quad+\mab{E}(\rho_H(S\setminus\{u_1\})\mid u_1\in S\not\in \mac{F})\cdot\mab{P}(S\not\in\mac{F}\mid u_1\in S)+\mab{P}(S\not\in\mac{F}\mid u_1\in S)  
	\\
	&=\mab{E}(\rho_H(S\setminus\{u_1\})\mid u_1\in S\subset V)+\mab{P}(S\not\in\mac{F}\mid u_1\in S)\\
	&=\mab{E}(\rho_H(T)\mid T\subset V(H))+\frac{2^{n-d-k}(2^d-1)}{2^{n-1}}\\
	&=\ero(H)+\frac{2^d-1}{2^{k+d-1}},
\end{aligned}
\]
which completes the proof of the proposition.
\end{proof}
\begin{proposition}\label{prop:attachedstar}Let $G$ be a graph on $n\ge1$ vertices, $T$ be the vertex set of an attached star in~$G$, and $H:=G-T$. Then
\[
\ero(G)-1<\ero(H)\le \ero(G)-1+2^{1-|T|}.
\]
\end{proposition}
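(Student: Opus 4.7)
The plan is to compute $\ero(G)-\ero(H)$ directly as an expectation over a uniformly random subset $S\subseteq V(G)$, and show it lies in the interval $[1-2^{1-|T|},\,1)$. The degenerate case $|T|=1$ (so $T$ consists only of the central vertex $c$) I would dispatch first: the upper bound $\ero(H)\le\ero(G)$ is immediate from Theorem~\ref{thm:avgvertexminor}, while the strict lower bound $\ero(H)>\ero(G)-1$ follows from Proposition~\ref{prop:avgdecompose} applied to $v=c$, since $\ero(G-c)\ge\ero(G)-1+2^{-d_G(c)}>\ero(G)-1$. Henceforth assume $|T|\ge 2$; let $c$ be the central vertex, $U=\{u_1,\dots,u_k\}$ be the $k=|T|-1$ leaves (each adjacent only to $c$), and $A:=N_G(c)\cap V(H)$.

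For a uniformly random $S\subseteq V(G)$, write $S'=S\cap V(H)$ and $Y=S\cap T$; since $S$ is uniform, $S'$ and $Y$ are independent and each uniform on its power set. I would partition the $2^{|T|}$ possibilities for $Y$ into four cases: (i) $Y=\emptyset$; (ii) $Y=T$; (iii) $c\in Y$ and $U\not\subseteq Y$; (iv) $c\notin Y$ and $Y\cap U\neq\emptyset$, with $2^k-1$ choices of $Y$ in each of (iii) and (iv). Since each leaf has no neighbors outside $c$, the matrix $A_G[S,V(G)\setminus S]$ has a block structure where the rows indexed by $U\cap S$ and columns indexed by $U\setminus S$ are either zero or equal to the indicator of $c$. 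In case (iii) the $U\cap Y$ rows vanish and the $c$-row has ones in the nonempty $U\setminus Y$ block of columns, making it independent of the $S'$-rows and yielding $\rho_G(S)-\rho_H(S')=1$. In case (iv) all $U\cap Y$ rows collapse to a single copy of the row with $1$ in the $c$-column and zeros elsewhere, and a short dimension argument splitting on whether $\mathbf{1}_{A\cap S'}$ lies in the column span of $A_H[S',V(H)\setminus S']$ shows the rank exceeds $\rho_H(S')$ by exactly one.

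Cases (i) and (ii) are symmetric via $\rho_G(S)=\rho_G(V(G)\setminus S)$ and both yield $\rho_G(S)-\rho_H(S')=\rank([A_H[S',V(H)\setminus S']\mid\mathbf{1}_{A\cap S'}])-\rho_H(S')\in\{0,1\}$. Setting
\[p:=\mathbb{E}_{S'}\bigl[\rank([A_H[S',V(H)\setminus S']\mid\mathbf{1}_{A\cap S'}])-\rho_H(S')\bigr]\in[0,1],\]
averaging over all $Y$ then collapses everything to $\ero(G)-\ero(H)=\frac{2p+2(2^k-1)}{2^{k+1}}=1-(1-p)/2^k$. The bound $p\ge 0$ immediately gives $\ero(H)\le\ero(G)-1+2^{-k}=\ero(G)-1+2^{1-|T|}$, and the strict inequality $\ero(H)>\ero(G)-1$ follows from $p<1$: the choice $S'=\emptyset$ contributes $0$, not $1$, to the average defining $p$. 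The step I expect to require the most care is case (iv), where showing that the collapsed row always contributes exactly one to the rank requires matching two sub-sub-cases (according to whether $\mathbf{1}_{A\cap S'}$ lies in the column span of $A_H[S',V(H)\setminus S']$) and verifying they meet at the same final value $\rho_H(S')+1$.
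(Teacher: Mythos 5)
Your proof is correct, and it rests on the same core observation as the paper's: analyze $\rho_G(S)-\rho_H(S\cap V(H))$ by conditioning on $Y=S\cap T$ and reading off the rank difference from the block structure of $A_G[S,V(G)\setminus S]$. The paper streamlines this by exploiting $\rho_G(S)=\rho_G(V(G)\setminus S)$ to condition on $c\in S$, so only your cases (ii) and (iii) need to be examined; it uses the one-sided estimate $\rho_G(S)\ge\rho_H(S\setminus T)$ together with the equality $\rho_G(S)=\rho_H(S\setminus T)+1$ when $T\not\subseteq S$, and it obtains the strict lower bound separately via Proposition~\ref{prop:avgdecompose} applied to the star of edges at $c$. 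Your version keeps all four cases and arrives at the exact identity $\ero(G)-\ero(H)=1-(1-p)/2^{|T|-1}$ with $p\in[0,1)$, which is a nice unification: both inequalities fall out of one formula, and the strictness of the lower bound is localized to the single observation that $S'=\emptyset$ contributes $0$ to the average defining $p$. Two small cleanups for the write-up: in case (ii) the literal expression for $\rho_G(S)-\rho_H(S')$ is $\rank\bigl([A_H[V(H)\setminus S',S']\mid\mathbf{1}_{A\setminus S'}]\bigr)-\rho_H(S')$, which equals your $p$-integrand only after averaging over $S'$ (complement $S'$ and use that $A_H$ is symmetric), so you should phrase the case (i)/(ii) reduction at the level of expectations rather than pointwise; and in case (iv) the sub-case split on whether $\mathbf{1}_{A\cap S'}$ lies in the column span of $A_H[S',V(H)\setminus S']$ is unnecessary, since the single bottom row $(0,\dots,0,1)$ lets you row-reduce the $c$-column to zero, giving rank exactly $\rho_H(S')+1$ outright.
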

\begin{proof}Let $V:=V(G)$ and let $T=\{u_1,\ldots,u_k,v\}$ where $v$ is the central vertex and $u_1,\ldots,u_k$ are leaves of $G[T]$. 
The left hand side inequality is trivial by Proposition~\ref{prop:avgdecompose} and Lemma~\ref{lem:avgcliques}, because
\[ \ero(G)\le \ero(H)+\ero(F)=\ero(H)+1-2^{-d_G(v)}<\ero(H)+1, \]
where $F$ is
the connected subgraph of $G$ consisting of all edges incident with $v$.

We move on to the right hand side inequality. Observe that for every $S\subset V$, we have $\rho_G(S)\ge \rho_H(S\setminus T)$. If furthermore $v\in S$ and $T\not\subset S$, then in $A_G[S,V\setminus S]$, the row vectors corresponding to $T\cap S\setminus \{v\}$ are all zero vectors, and for every $u_j\in T\setminus S$, the column vector corresponding to $u_j$ has only one $1$ as its common entry with the row vector corresponding to $v$. It follows that in $A_G[S,V\setminus S]$, this row vector is linearly independent to the other row vectors, and in $A_{G}[S\setminus\{v\},V\setminus S]$, the column vectors corresponding to $T\setminus S$ are all zero vectors. Hence $\rho_G(S)=\rho_H(S\setminus T)+1$. Therefore, due to the symmetry of $\rho_G$,
\[
\begin{aligned}
	\ero(G)&=\mab{E}(\rho_G(S)\mid v\in S\subset V)\\ 
	&=\mab{E}(\rho_G(S)\mid v\in T\subset S)\cdot\mab{P}(T\subset S\mid v\in S)+\mab{E}(\rho_G(S)\mid v\in S,T\not\subset S)\cdot\mab{P}(T\not\subset S\mid v\in S)\\
	&
	\ge\mab{E}(\rho_H(S\setminus T)\mid v\in T\subset S)\cdot\mab{P}(T\subset S\mid v\in S)\\ 
	&\quad+\mab{E}(\rho_H(S\setminus T)+1\mid v\in S,T\not\subset S)\cdot\mab{P}(T\not\subset S\mid v\in S)
	\\
	&
	\ge\mab{E}(\rho_H(S\setminus T)\mid v\in T\subset S)\cdot\mab{P}(T\subset S\mid v\in S)\\ 
	&\quad+\mab{E}(\rho_H(S\setminus\{v\})\mid v\in S,T\not\subset S)\cdot\mab{P}(T\not\subset S\mid v\in S)+\mab{P}(T\not\subset S\mid v\in S)  
	\\
	&=\mab{E}(\rho_H(S\setminus T)\mid v\in S\subset V)+\mab{P}(T\not\subset S\mid v\in S)\\
	&=\mab{E}(\rho_H(U)\mid U\subset V(H))+\frac{2^{n-k-1}(2^k-1)}{2^{n-1}}\\
	&=\ero(H)+1-2^{1-|T|}.
\end{aligned}
\]
This completes the proof.
\end{proof}

\section{Characterization of classes of graphs of bounded average cut-rank}\label{sec:wqo}
In this section, we will prove Theorem~\ref{thm:nd}, which characterizes classes of graphs of bounded average cut-rank and relates them to existing concepts. We will also discuss some corollaries on well-quasi-ordering.

We start with some definitions solely used in this section. In a graph $G$, two vertices $x,y$ are called \emph{twin-equivalent} if either $x=y$ or they are twins. 
It is easy to verify
that the relation \dd{}twin-equivalent\ee{} is an equivalence relation on $V(G)$.
Thus the vertex set of $G$ can be partitioned into \emph{twin classes}.
The \emph{neighborhood diversity} of $G$, first defined by Lampis~\cite{lampis2012}, is the number of twin classes in $G$.

Here is a fundamental property on the rank and the number of distinct rows of a $0$-$1$ matrix.
\begin{lemma}\label{lem:rows}Any $0$-$1$ matrix $M$ has at most $2^{\rank(M)}$ distinct rows.
\end{lemma}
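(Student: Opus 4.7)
The plan is to interpret $\rank(M)$ as the rank over the binary field $\mathbb{F}_2$, consistent with the cut-rank conventions used throughout the paper. Set $r:=\rank(M)$ and let $V$ denote the row space of $M$, viewed as a subspace of $\mathbb{F}_2^{c}$, where $c$ is the number of columns of $M$. By definition of rank, $\dim_{\mathbb{F}_2} V = r$, so $V$ contains exactly $2^{r}$ vectors. Since every row of $M$ is a $0$-$1$ vector and hence an element of $V$, the set of distinct rows of $M$ is a subset of $V$, which gives the desired bound.

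There is no real obstacle here: the lemma is essentially immediate from the definition of rank over a finite field. If one prefers to avoid invoking the row space directly, one can instead pick $r$ rows of $M$ that form an $\mathbb{F}_2$-basis of the row space; every row of $M$ is then one of the $2^{r}$ possible $\mathbb{F}_2$-linear combinations of these $r$ basis rows, giving the same conclusion. The bound is in fact sharp, realized by any matrix whose rows enumerate all $2^{r}$ vectors of $\mathbb{F}_2^{r}$, so no improvement is possible without additional hypotheses on $M$.
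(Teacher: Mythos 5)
Your proof is correct. It is worth noting, though, that it takes a slightly different route from the paper's. The paper selects a non-singular $r\times r$ submatrix with column set $I$ and argues that every row of $M$ is determined by its $0$-$1$ entries on $I$, giving at most $2^{|I|}=2^r$ distinct rows; this argument is field-agnostic and works whether $\rank(M)$ is computed over $\mathbb F_2$, $\mathbb Q$, or any other field (it only uses that the \emph{entries} of $M$ lie in $\{0,1\}$). Your proof instead fixes the field to be $\mathbb F_2$ and observes that the row space is a finite set of size $2^r$ containing all rows, which is shorter but intrinsically tied to the field being $\mathbb F_2$. Since $\rank_{\mathbb F_2}(M)\le\rank_{\mathbb Q}(M)$ for a $0$-$1$ matrix, your $\mathbb F_2$ reading actually gives the sharper bound, and it is indeed the one used in the paper (the lemma is applied to cut-ranks, which are $\mathbb F_2$-ranks), so the interpretation is safe. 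The paper's submatrix argument is the one that generalizes cleanly to the later claim that a matrix over $\mathbb F$ of rank $k$ has at most $|\mathbb F|^k$ distinct rows, although your row-space idea gives that statement just as directly.
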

\begin{proof}
  Let $r=\rank(M)$.
  Then $M$ has a non-singular $r\times r$ submatrix, whose columns are indexed by $I$. Note that $\abs{I}=r$ and 
  each row vector
  is completely determined by the $0$-$1$ values on the entries in $I$
  and therefore $M$ has at most $2^{\abs{I}}$ distinct rows.
\end{proof}

The authors would like to thank Alex Scott (personal communication) for suggesting the proof of the following lemma.
\begin{lemma}\label{lem:avgmax}
  For every graph $G$, 
  $\mco(G)\le 4\ero(G)$.
\end{lemma}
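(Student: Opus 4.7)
The plan is to let $k=\mco(G)$ and fix $X\subseteq V(G)$ with $\rho_G(X)=k$. Since the matrix $A_G[X,V(G)\setminus X]$ has rank $k$ over $\mab{F}_2$, we can choose $X_0\subseteq X$ and $Y_0\subseteq V(G)\setminus X$ with $\abs{X_0}=\abs{Y_0}=k$ such that the square submatrix $M:=A_G[X_0,Y_0]$ is nonsingular. The strategy is to show that a uniformly random $S\subseteq V(G)$ already picks up a constant fraction of $k$ in expectation, using $M$ as a witness.

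For a uniformly random $S\subseteq V(G)$, because $X_0$ and $Y_0$ are disjoint, $A_G[S\cap X_0,Y_0\setminus S]$ is a submatrix of $A_G[S,V(G)\setminus S]$. Set $R:=S\cap X_0$ and $C:=Y_0\setminus S$; these are independent uniform random subsets of $X_0$ and $Y_0$, and
\[ \ero(G)=\mab{E}[\rho_G(S)]\ge\mab{E}[\rank(M[R,C])]. \]
So it suffices to prove the following purely linear-algebraic claim: for any $k\times k$ nonsingular matrix $M$ over $\mab{F}_2$ and independent uniform random subsets $R,C$ of its row and column index sets, $\mab{E}[\rank(M[R,C])]\ge k/4$.

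I would prove this claim via the sub-lemma that for any matrix $N$ of rank $r$, if $R$ is a uniform random subset of its rows, then the expected rank of the submatrix of $N$ on rows $R$ is at least $r/2$. To see the sub-lemma, fix $r$ linearly independent rows of $N$, indexed by some set $X'$; any subset of these rows is again linearly independent, so the rank of the row-restriction of $N$ to $R$ is at least $\abs{R\cap X'}$, whose expectation is $r/2$. Applying this sub-lemma to the matrix $M[X_0,C]$ for each fixed $C$ gives $\mab{E}_R[\rank(M[R,C])]\ge\rank(M[X_0,C])/2$, and applying it to $M^T$ (i.e.\ restricting columns of $M$) gives $\mab{E}_C[\rank(M[X_0,C])]\ge k/2$. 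The law of total expectation then yields $\mab{E}_{R,C}[\rank(M[R,C])]\ge k/4$, so $\mco(G)=k\le 4\ero(G)$.

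The main subtlety is to reduce from the rectangular rank-$k$ matrix $A_G[X,V(G)\setminus X]$ to the square nonsingular submatrix $M$, so that the sub-lemma can be applied symmetrically in both directions; a direct tail-estimate argument using $\abs{R}+\abs{C}-k$ on the rectangular matrix only produces a bound of order $\sqrt{k}$ because that quantity has mean zero and standard deviation $\Theta(\sqrt{k})$. The factor $4$ is best possible for this approach, as witnessed by $M=I_k$: then $\rank(I_k[R,C])=\abs{R\cap C}$ has expectation exactly $k/4$.
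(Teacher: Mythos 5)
Your proof is correct and takes essentially the same approach as the paper. Both reduce to a $k\times k$ nonsingular submatrix of the cut adjacency matrix and show that a uniform random restriction of its rows, and then of its columns, each preserves at least half the rank in expectation, giving the factor $4$; the only superficial differences are that you work directly on $G$ rather than passing to the induced subgraph $G[A\cup B]$, and you isolate an explicit ``half-the-rank'' sub-lemma and condition on the column restriction first, whereas the paper conditions on the row-subset $X$ first and picks, for each $X$, a witness column set $Z$ on which $A_H[X,Z]$ is nonsingular.
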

\begin{proof}
  Let $k$ be the maximum cut-rank of $G$. Then there are two disjoint subsets $A$, $B$ of $V(G)$ satisfying $|A|=|B|=k$ and $\rho_G^*(A,B)=k$. Let $H:=G[A\cup B]$.
  Since $\ero(G)\ge \ero(H)$, it suffices to show that $\ero(H)\ge k/4$.

Let $S$ be a subset of $V(H)=A\cup B$ chosen uniformly at random. Then $S\cap A$ is a random subset of $A$, and  $B\setminus S$ is a random subset of $B$. Since
\[ \rho_H(S)=\rho_H^*(S,(A\cup B)\setminus S)\ge \rho_H^*(S\cap A,B\setminus S), \]
we have
\[ \ero(H)=\mab{E}_S\rho_G(S)\ge \mab{E}_S\rho_H^*(S\cap A,B\setminus S)=\mab{E}_{X,Y}\rho_H^*(X,Y), \]
where the last expression indicates the expected value of $\rho_H^*(X,Y)$ with $X,Y$ selected from $2^A$, $2^B$, respectively.

Fix $X\subseteq A$ and let $Y\subseteq B$ be random. Because $\rho_H^*(A,B)=\rho_G^*(A,B)=k$ and $|A|=|B|=k$, $\rho_H^*(X,B)=|X|$, so there is a subset $Z$ of $B$ satisfying $|Z|=|X|$ and $A_H[X,Z]$ has full rank. Because $Y\subseteq B$ is random, $Y\cap Z$ is a random subset of $Z$, which implies that
\[ \mab{E}_Y\rho_H^*(X,Y)\ge \mab{E}_Y\rho_H^*(X,Y\cap Z)=\mab{E}_Y|Y\cap Z|=\frac{|X|}{2}. \]
Therefore
\[ \mab{E}_{X,Y}\rho_G^*(X,Y)=\mab{E}_{X}\mab{E}_Y\rho_G^*(X,Y)\ge\mab{E}_X\frac{|X|}{2}=\frac{k}{4}. \]
Thus $\ero(H)\ge k/4$ and the conclusion follows.
\end{proof}

The following lemma shows that a hereditary class of graphs is of bounded maximum cut-rank (or average cut-rank) if and only if it is of bounded neighborhood diversity.
This result is also essential to the proof of Theorem~\ref{thm:main2}.
\begin{lemma}\label{lemma:key}%
    For every graph $G$, 
    $\nd(G)< 2^{2\mco(G)+2}$.
\end{lemma}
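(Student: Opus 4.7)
My plan is to reduce the bound to a statement about representatives and then apply Lemma~\ref{lem:rows} via a careful choice of bipartitions. Let $k=\nd(G)$ and $m=\mco(G)$, and pick $U\subseteq V(G)$ containing exactly one representative per twin class, so that $|U|=k$ and $U$ is pairwise non-twin in $G$. I note that I cannot simply pass to $G[U]$, since non-twinness need not be preserved under this operation (for instance, the two natural representatives in a disjoint union of two triangles become twins in the induced subgraph on those two vertices), so the whole argument must take place inside $G$ itself. The task then becomes to show that $k<2^{2m+2}$.

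The main step is to assign each vertex $v\in V(G)$ a label from a set of cardinality at most $2^{2m+2}$, in such a way that two vertices of $U$ sharing a label must be twins in $G$; the pairwise non-twinness of $U$ then forces these labels to be distinct on $U$, giving $k\le 2^{2m+2}-1<2^{2m+2}$. To build such a labeling, I would fix any bipartition $V(G)=A\cup B$ and invoke Lemma~\ref{lem:rows} on $A(G)[A,B]$, whose rank is at most $\rho_G(A)\le m$: this matrix has at most $2^m$ distinct rows and at most $2^m$ distinct columns. Consequently each vertex receives a label in $\{A,B\}\times\{0,1\}^{\le m}$ (its side together with the row or column of the matrix indexed by it), giving at most $2\cdot 2^m = 2^{m+1}$ labels per bipartition. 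Combining two bipartitions produces a combined label from a set of cardinality at most $(2^{m+1})^2=2^{2m+2}$.

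The main obstacle is what one might call the \emph{diagonal-quadrant issue}: two representatives lying on the same side in both bipartitions have combined labels that only record their common neighborhoods outside the quadrant they share. To make these agreements imply twinness, the two bipartitions must be arranged so that, for each pair of representatives sitting in a common quadrant, the collective agreement covers all of $V(G)\setminus\{u,v\}$. I expect to resolve this either by engineering the second bipartition to subdivide the row-type classes of the first (so that every quadrant is small and the missing coordinates land inside $\{u,v\}$), or by inducting on $|V(G)|$: within each row-type class on the $A$-side, the representatives are pairwise non-twin as vertices of $G[A]$ (they agree on $B$ and so must disagree somewhere in $A\setminus\{u,v\}$), and $\mco(G[A])\le m$, so the inductive hypothesis $\nd(G[A])<2^{2m+2}$ caps each class. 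The bookkeeping needed to recover the tight factor $4^{m+1}$, rather than the naive $8^{m+1}$ obtained by crudely multiplying the bipartition count with the inductive bound, is where the main work of the proof lies.
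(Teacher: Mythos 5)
Your plan is genuinely different from the paper's, but it is incomplete, and you are right to flag the \emph{diagonal-quadrant issue} as the sticking point: as written, the argument does not establish that the combined label is injective on a set of pairwise non-twin representatives, and neither of your two proposed fixes closes the gap. The inductive route does not give the claimed bound: partitioning the representatives in $A$ by their row label gives at most $2^m$ classes, and within each class the induction bounds the count by $\nd(G[A])<2^{2m+2}$, so you get at most $2\cdot 2^m\cdot 2^{2m+2}=2^{3m+3}$, which is the ``naive $8^{m+1}$'' you mention. Worse, the recursion does not telescope---$\mco(G[A])$ can equal $m$, so the induction is on $|V(G)|$ only, and the factor $2^{m+1}$ is paid at every level without any compensating gain. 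The first fix (choosing the second bipartition to refine the row classes of the first) is too vague to evaluate: the column label of $u\in A\cap A'$ in the second bipartition records $N(u)\cap B'$, which mixes vertices of $A$ and $B$, so it is not clear that two representatives in the same quadrant with equal labels must be twins. You have an idea, not yet a proof.

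For contrast, the paper takes a cleaner route that avoids labeling altogether. It fixes a maximal set $A$ of pairwise non-twin vertices and, for each pair $u,v\in A$, records a witness $w$ distinguishing them. A random subset $S\subseteq A$ (each vertex kept with probability $1/\sqrt{|A|}$) is then pruned to a set $T$ of size $>\sqrt{|A|}/2$ with the property that for every pair in $T$ the witness lies outside $T$; consequently $A_G[T,V(G)\setminus T]$ has $>\sqrt{|A|}/2$ distinct rows, and Lemma~\ref{lem:rows} gives $2^{\mco(G)}>\sqrt{|A|}/2$, i.e.\ $|A|<2^{2\mco(G)+2}$. The crucial move you are missing is to \emph{choose the bipartition after} seeing which pairs need distinguishing, rather than trying to make one or two fixed bipartitions distinguish everyone simultaneously; the alteration-method pruning is what makes the witnesses all land on the ``other side'' at once.
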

\begin{proof}[Proof (Adapted from the proof of Lemma 4.5 in~\cite{dko2019})]
	Let $A$ be a maximal subset of $V(G)$ without any pair of twins in $G$.
	We construct a complete graph $H$ on the vertex set $A$ and label every edge $uv$ of $H$ as follows:
	$uv$ is labeled by $w$ for some $w\in V(G)\setminus\{u,v\}$ adjacent to only one among $u$ and $v$ in $G$.
	This labeling exists because of the definition of $A$.
	Let $m:=|A|$, and let $S$ be a random subset of $A=V(H)$ where each vertex is included independently at random with probability $p:=1/\sqrt{m}$.
	For every edge $e\in E(H)$, let $X_e$ be the indicator random variable for the event that the ends and the label of $e$ in $H$ are in $S$, and put $X:=\sum_{e\in E(H)}X_e$.
	Then for all $e\in E(H)$, $\mathbb E[X_e]=p^3$ if the label of $e$ is in $A$ and $\mathbb E[X_e]=0$ otherwise.
  	By linearity of expectation
	\[ \mathbb E[|S|-X]=\mathbb E[|S|]-\mathbb E[X]\ge pm-p^3{m\choose 2}> pm-\frac{p^3m^2}{2}=\frac{\sqrt{m}}{2}. \]
	Thus, there is a subset $S$ of $A$ such that $|S|-X>\sqrt{m}/2$;
	that is, there are fewer than $|S|-\sqrt{m}/2$ edges in $H$ having ends and labels in $S$.
	Then, by deleting one end for each such edge, we get a subset $T$ of $S$ satisfying $|T|> \sqrt{m}/2$ and for every distinct $u,v\in T$,
	in $H$ the label of $uv$ does not belong to $T$.
	This means that for every distinct $u,v\in T$, in $G$ there is a vertex $w$ outside $T$ which is adjacent to only one of $u$ and $v$,
	which implies that $A_G[T,V(G)\setminus T]$ has more than $\sqrt{m}/2$ distinct rows.
	Hence, by Lemma~\ref{lem:rows},
	\[ 2^{\mco(G)}\ge 2^{\rho_G(T)}> \frac{\sqrt{m}}{2}, \]
	which implies $|A|=m< 2^{2\mco(G)+2}$.
	As every vertex in $V(G)\setminus A$ is a twin of some vertex in $A$ ($A$ is maximal),
	we conclude that $V(G)$ can be partitioned into less than $2^{2\mco(G)+2}$ twin classes.
\end{proof}

Now we are ready to prove Theorem~\ref{thm:nd}.
\thmnd*
\begin{proof}
  As $G$ has at least one edge, $\ero(G)<\mco(G)$ trivially.
  Since $\rho_G(X)\le \mr(\mathbb F_2,G)$ for all $X\subset V(G)$
  trivially,
  $\mco(G)\le \mr(\mathbb F_2,G)$.

  To prove $\mr(\mathbb F,G)\le \nd(G)$ for any field $\mathbb F$, 
  let us assume that $k=\nd(G)$ and so $G$ has exactly $k$ twin classes. 
  Starting from the adjacency matrix of $G$, 
  we change the diagonal entry of a vertex $v$ 
  to $1$ if $v$ belongs to a twin class that is a clique of $G$.
  The resulting matrix has $k$ distinct rows
  and so its rank is at most $k$. 
  This proves that $\mr(\mathbb F,G)\le k$.
  
  Since every matrix of rank $k$ over $\mathbb{F}$ has at most $\abs{\mathbb F}^k$ distinct rows, we have $\nd(G)\le \abs{\mathbb F}^{\mr(\mathbb F,G)}$. This was shown by Ding and Kotlov~\cite[Corollary 2.2]{DK2006}.

  Lemmas~\ref{lem:avgmax} and \ref{lemma:key} show that $\nd(G)<2^{2\mco(G)+2}\le2^{8\ero(G)+2}$.

  Let $t=\cd(G)$. Then  there are complete graphs $G_1,\ldots,G_t$ such that $G=G_1\triangle\cdots\triangle G_t$. As $E(G)\neq\emptyset$, $t\ge 1$.
  By Proposition~\ref{prop:avgdecompose} and Lemma~\ref{lem:avgcliques}, we see that
  $\ero(G)\le\sum_{i=1}^t\ero(G_i)<\cd(G)$.
  Also, $V(G)$ can be partitioned into $2^{t}$ subsets,
  each of them is a set of pairwise twins, based on the inclusion of $V(G_1)$, $V(G_2)$, $\ldots$, $V(G_t)$. This leads to an inequality that $\nd(G)\le 2^{\cd(G)}$.
  
  Now, it remains to prove that $\cd(G)\le \frac{3}{2}\mr(\mathbb F_2,G)$.
  Let $A$ be a symmetric matrix over $\mathbb F_2$ of rank $m$ realizing $\mr(\mathbb F_2,G)$.
  It is known that every symmetric matrix of rank $m$ can be written as a sum of $m-2s$ rank-$1$ symmetric matrices and $s$ rank-$2$ symmetric matrices, see Godsil and Royle~\cite[Lemma 8.9.3]{GR2001}.
  As the field is binary, we can also deduce easily that in the outcome, the rank-$2$ symmetric matrices have zero diagonals, by using the proof of \cite[Lemma 8.10.1]{GR2001}.
  Rank-$1$ symmetric matrices over $\mathbb F_2$ are of the form 
  \[
  \begin{pmatrix}
    1&0\\
    0&0
  \end{pmatrix}
  \]
  where $1$ represents an all-$1$ matrix,
  $0$ represents an all-$0$ matrix,
  and the diagonal entries represent square matrices.
  Thus, every rank-$1$ symmetric matrix over $\mathbb F_2$ is the adjacency matrix of one complete graph with some isolated vertices, while changing a few diagonal entries to $1$.
  Rank-$2$ symmetric matrices over $\mathbb F_2$ with zero diagonals are of the form 
  \[
  \begin{pmatrix}
    0&1&1\\
    1&0&1\\
    1&1&0
  \end{pmatrix}
  \text{ or }
  \begin{pmatrix}
    0&1\\
    1&0
  \end{pmatrix}.
  \]
  and so every rank-$2$ symmetric matrix 
  over $\mathbb F_2$ with zero diagonals can be written as the sum of three rank-$1$ symmetric matrices as follows.
  \[
    \begin{pmatrix}
      0&1&1\\
      1&0&1\\
      1&1&0
    \end{pmatrix}
    =
    \begin{pmatrix}
      1&1&0\\
      1&1&0\\
      0&0&0
    \end{pmatrix}+    \begin{pmatrix}
      0&0&0\\
      0&1&1\\
      0&1&1
    \end{pmatrix}+   \begin{pmatrix}
      1&0&1\\
      0&0&0\\
      1&0&1
    \end{pmatrix}.
    \]
    Thus, $A$ can be written as a sum of at most $3m/2$ rank-$1$ symmetric matrices over $\mathbb F_2$.
    This proves that $\cd(G)\le \frac{3}{2}\mr(\mathbb F_2,G)$.
\end{proof}
Corollary~\ref{cor:wqo} yields the following corollary.
\begin{corollary}\label{cor:finitelist}
  Let $\mathcal C$ be a hereditary class of graphs.
  If graphs in $\mathcal C$ have bounded average cut-rank,
  then there exists a finite list of graphs $H_1$, $H_2$, $\ldots$, $H_k$
  such that
  a graph $G$ is in $\mathcal C$
  if and only if
  $G$ has no induced subgraph isomorphic to $H_i$ for every $i=1,\ldots,k$. 
\end{corollary}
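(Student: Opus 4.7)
The plan is to derive Corollary~\ref{cor:finitelist} from Corollary~\ref{cor:wqo} by the standard well-quasi-ordering argument: obstructions to a hereditary class form an antichain, and if they all lie in a well-quasi-ordered class, there can only be finitely many of them. The crux is therefore to verify that the induced-subgraph-minimal obstructions to $\mathcal{C}$ themselves have bounded average cut-rank, so that Corollary~\ref{cor:wqo} can be applied.

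First I would fix a real number $\alpha\ge 0$ such that $\ero(G)\le \alpha$ for every $G\in\mathcal{C}$. Let
\[
\mathcal{O}:=\{H : H\notin\mathcal{C}\text{ but every proper induced subgraph of }H\text{ belongs to }\mathcal{C}\}.
\]
Since $\mathcal{C}$ is hereditary, a graph $G$ lies in $\mathcal{C}$ if and only if no member of $\mathcal{O}$ is isomorphic to an induced subgraph of $G$; moreover $\mathcal{O}$ is an antichain under the induced-subgraph relation, by minimality.

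Next I would bound the average cut-rank of any $H\in\mathcal{O}$. Pick any vertex $v\in V(H)$; then $H-v$ is a proper induced subgraph of $H$, so $H-v\in\mathcal{C}$ and hence $\ero(H-v)\le\alpha$. Using the elementary bound $\ero(H)\le \ero(H-v)+1$ stated just before Corollary~\ref{cor:wqo} (or equivalently, the vertex-deletion estimate given by Proposition~\ref{prop:avgdecompose}), we get $\ero(H)\le\alpha+1$. Therefore $\mathcal{O}$ is contained in the class $\mathcal{C}':=\{G:\ero(G)\le\alpha+1\}$, which has bounded average cut-rank.

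Finally I would invoke Corollary~\ref{cor:wqo}, which tells us that $\mathcal{C}'$ is well-quasi-ordered under the induced-subgraph relation. An antichain inside a well-quasi-ordered class is necessarily finite (otherwise, from any infinite antichain one extracts an infinite sequence with no comparable pair, contradicting the well-quasi-ordering). Hence $\mathcal{O}$ is finite, say $\mathcal{O}=\{H_1,\ldots,H_k\}$, and this list characterizes $\mathcal{C}$ as claimed. I expect no essential obstacle here: the only content is the passage from a bound on $\mathcal{C}$ to a bound on $\mathcal{O}$ via the $+1$ vertex-deletion estimate, after which everything is formal.
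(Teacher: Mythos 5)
Your proposal is correct and follows the same route as the paper: define the minimal-obstruction antichain, bound its members' average cut-rank by $\alpha+1$ via the vertex-deletion estimate, and conclude finiteness from the well-quasi-ordering of Corollary~\ref{cor:wqo}. This matches the paper's argument exactly.
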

\begin{proof}
  Let $\alpha$ be a real such that every graph in $\mathcal C$ has average cut-rank at most~$\alpha$.
  If $H$ is an induced-subgraph-minimal graph not in $\mathcal C$,
  then $H$ has average cut-rank at most $\alpha+1$
  by Proposition~\ref{prop:avgdecompose}.
  By Corollary~\ref{cor:wqo}, there are only finitely many 
  induced-subgraph-minimal graphs not in $\mathcal C$,
  because they form an antichain.
\end{proof}

Let $S_{\ero}$ be the set of all reals $\alpha$ such that there exists a graph with average cut-rank $\alpha$. By the definition of average cut-rank, this set is trivially a subset of $\{p/2^q:p\in\mab{N}\cup\{0\},q\in\mab{N}\}$. By the previous corollary, we deduce the following topological property of $S_{\ero}$.
\begin{proposition}For any $\alpha\ge0$ there is some $\delta_{\alpha}>0$ such that every graph has average cut-rank outside $(\alpha,\alpha+\delta_{\alpha})$. This implies that $S_{\ero}$ is not dense in any interval, hence is nowhere dense in $[0,\infty)$.
\end{proposition}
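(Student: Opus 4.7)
The plan is to apply Corollary~\ref{cor:finitelist} to the class $\mathcal{C}_\alpha := \{G : \ero(G) \le \alpha\}$. First I would verify that $\mathcal{C}_\alpha$ is hereditary: since any induced subgraph is a vertex-minor, Theorem~\ref{thm:avgvertexminor} gives $\ero(G-v) \le \ero(G) \le \alpha$ whenever $G \in \mathcal{C}_\alpha$, so $\mathcal{C}_\alpha$ is closed under taking induced subgraphs. Moreover every graph in $\mathcal{C}_\alpha$ has average cut-rank at most $\alpha$, so Corollary~\ref{cor:finitelist} applies.

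The corollary then furnishes a finite list $H_1, \ldots, H_k$ of graphs such that $G \in \mathcal{C}_\alpha$ if and only if no $H_i$ is isomorphic to an induced subgraph of $G$. The degenerate case $k=0$ (every graph lies in $\mathcal{C}_\alpha$) makes the claim trivial, so I would focus on $k \ge 1$. Each $H_i$ fails to lie in $\mathcal{C}_\alpha$ and therefore satisfies $\ero(H_i) > \alpha$, so I would set
\[ \delta_\alpha := \min_{1 \le i \le k}\bigl(\ero(H_i) - \alpha\bigr), \]
which is a minimum of finitely many positive reals and hence strictly positive. Now for any graph $G$ with $\ero(G) > \alpha$, the graph $G$ contains some $H_i$ as an induced subgraph, and another application of Theorem~\ref{thm:avgvertexminor} yields $\ero(G) \ge \ero(H_i) \ge \alpha + \delta_\alpha$. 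Thus the open interval $(\alpha, \alpha+\delta_\alpha)$ is disjoint from $S_{\ero}$.

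To deduce that $S_{\ero}$ is nowhere dense, I would take an arbitrary nonempty open interval $(a,b) \subseteq [0,\infty)$ and apply the previous step at $\alpha = a$: the nonempty open subinterval $(a, \min(b, a+\delta_a))$ of $(a,b)$ is disjoint from $S_{\ero}$, so $S_{\ero}$ is not dense in any interval, which by definition means $S_{\ero}$ is nowhere dense in $[0,\infty)$. Essentially all of the content of the proposition is packaged in Corollary~\ref{cor:finitelist}; the remaining steps are purely formal, so I do not expect any real obstacle beyond correctly instantiating that corollary.
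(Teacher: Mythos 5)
Your proof is correct and takes essentially the same route as the paper: apply Corollary~\ref{cor:finitelist} to the hereditary class of graphs of average cut-rank at most $\alpha$, obtain a finite forbidden list, and take $\delta_\alpha$ to be the smallest excess $\ero(H_i)-\alpha$. The only difference is that you spell out the hereditariness check, the (vacuous) $k=0$ case, and the final nowhere-dense deduction, which the paper leaves implicit.
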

\begin{proof}
  By Corollary~\ref{cor:finitelist}, there exists a finite list $\{G_1,\ldots,G_m\}$ of forbidden induced subgraphs for the class of graphs of average cut-rank at most $\alpha$. Because $\alpha<\ero(G_j)=:r_j$ for all $j=1,\ldots,m$, we have $\alpha<\min\{r_1,\ldots,r_m\}=:q_{\alpha}$. Hence there is no graph having average cut-rank lying inside $(\alpha,q_{\alpha})$. The conclusion thus follows for $\delta_{\alpha}=q_{\alpha}-\alpha$.
\end{proof}

\section{Upper bound on the size of induced subgraph obstructions}\label{sec:upperbound}

Ding and Kotlov~\cite{DK2006} proved that 
each forbidden induced subgraph for the class of graphs of 
minimum rank over a finite field $\mathbb F$ at most $k$
has at most $(\abs{\mathbb F}^k/2+1)^2$ vertices.

We can find an upper bound on 
the size of  
each forbidden induced subgraph for the class of graphs of 
maximum cut-rank at most $k$ as follows.

\begin{theorem}
  \label{thm:maxcutrankobs}
  If $\mco(G)>k$ and
  $\mco(G-v)\le k$ for all vertices $v$ of $G$, then 
  $\abs{G}=2k+2$.
\end{theorem}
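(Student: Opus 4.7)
\medskip

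\noindent\textbf{Proof proposal.}
The lower bound $\abs{G}\ge 2k+2$ is immediate from the fact that $\rho_G(S)\le\min(\abs{S},\abs{V(G)\setminus S})$ for every $S\subseteq V(G)$, once we exhibit $S$ with $\rho_G(S)\ge k+1$. For the matching upper bound I would first pin down the exact value of $\mco(G)$. If $\mco(G)\ge k+2$ and $T\subseteq V(G)$ achieves this maximum, then for any vertex $v\in V(G)$ (say $v\in T$, otherwise swap $T$ and $V(G)\setminus T$), the matrix $A_{G-v}[T\setminus\{v\},V(G)\setminus T]$ is obtained from $A_G[T,V(G)\setminus T]$ by deleting one row, so its rank drops by at most one. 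This gives $\mco(G-v)\ge k+1>k$, contradicting minimality. Hence $\mco(G)=k+1$.

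Next, fix $S\subseteq V(G)$ with $\rho_G(S)=k+1$, and choose $X\subseteq S$ and $Y\subseteq V(G)\setminus S$ with $\abs{X}=\abs{Y}=k+1$ such that $A_G[X,Y]$ is nonsingular over $\mathbb F_2$; such sets exist because the rank of $A_G[S,V(G)\setminus S]$ is $k+1$. In particular both $\abs{S}\ge k+1$ and $\abs{V(G)\setminus S}\ge k+1$.

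The heart of the argument is the following deletion step. Suppose for contradiction that $\abs{S}\ge k+2$; pick any $v\in S\setminus X$. Then $X\subseteq V(G-v)\setminus(V(G)\setminus S)$ and $Y\subseteq V(G)\setminus S=V(G-v)\setminus(S\setminus\{v\})$, so the nonsingular block $A_G[X,Y]$ sits inside $A_{G-v}[S\setminus\{v\},V(G)\setminus S]$. Consequently
\[
\mco(G-v)\ge\rho_{G-v}(S\setminus\{v\})\ge\rank(A_G[X,Y])=k+1,
\]
contradicting $\mco(G-v)\le k$. By the symmetric argument with the roles of $S$ and $V(G)\setminus S$ swapped, $\abs{V(G)\setminus S}\le k+1$ as well. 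Combining with the earlier inequalities yields $\abs{S}=\abs{V(G)\setminus S}=k+1$, hence $\abs{G}=2k+2$.

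I do not expect any real obstacle here: the argument is a clean minimality-plus-submatrix argument, and the only subtlety is choosing the deleted vertex outside the support of the witnessing $(k+1)\times(k+1)$ nonsingular block, which the hypothesis $\abs{S}\ge k+2$ (resp.\ $\abs{V(G)\setminus S}\ge k+2$) makes possible.
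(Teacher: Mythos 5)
Your proof is correct and follows essentially the same route as the paper: exhibit a nonsingular $(k+1)\times(k+1)$ block $A_G[X,Y]$, note that any vertex outside $X\cup Y$ can be deleted without destroying the block (giving the upper bound), and observe that any $S$ witnessing cut-rank $k+1$ forces $\abs{S},\abs{V(G)\setminus S}\ge k+1$ (giving the lower bound). Your intermediate step establishing $\mco(G)=k+1$ is correct but unnecessary, since the rest of the argument only uses $\mco(G)\ge k+1$.
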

\begin{proof}
  If $\mco(G)>k$, then there exists a pair $(X,Y)$ of disjoint sets of vertices
  such that $\abs{X}=\abs{Y}=k+1$ and 
  the rank of $A(G)[X,Y]=k+1$.
  If $\abs{G}>2k+2$, then there is a vertex $v\notin X\cup Y$ and therefore $\mco(G-v)\ge \rank A(G)[X,Y]=k+1$, contradicting the assumption. 
  Trivially, if $\abs{G}<2k+2$, then $\mco(G)\le k$.
\end{proof}

Now we will find such an upper bound for the class of
graphs of average cut-rank at most $\alpha$, thus
proving Theorem~\ref{thm:main2}. 
For convenience, we recall the sequence $\{x_n(\vep)\}_{n\ge0}$ defined in Section~\ref{sec:introduction}, as follows.
\begin{align*} 
x_0(\vep)&=\max(\lfloor2-\log(1-\vep)\rfloor,5),\\
x_{n}(\vep)&=2^{8n+10}\lfloor x_{n-1}(\vep)-\log (1-\{2^{x_{n-1}(\vep)}\vep/2\})+1\rfloor
\quad\text{ for all integers }n\ge1.
\end{align*}

\sizeobstruction*
\begin{proof}%
  Let $\alpha=\vep+n$ where $\vep=\{\alpha\}\in[0,1)$ and $n=\lfloor\alpha\rfloor\in\mab{N}\cup\{0\}$. We fix $\vep$ and proceed by induction on $n\ge0$. For convenience, set $x_n:=x_n(\vep)$ for all $n\ge0$.

  First let us assume that $n=0$.
  If there is a vertex $v\in V(G)$ such that $G-v$ has no isolated vertices,
	then by Proposition~\ref{prop:avgsmallest},
	$\vep\ge \ero(G-v)\ge 1-2^{1-(|G|-1)}$,
  which implies that $\abs{G}\le \lfloor2-\log(1-\vep)\rfloor\le x_0$.
  Thus we may assume that the deletion of every vertex of $G$ yields a graph with some isolated vertex.
  It follows that $E(G)$ is a perfect matching
  and therefore $\vep\ge\ero(G-v)=\ero(\frac{\abs{G}-1}{2}K_2)=\frac{\abs{G}-1}{4}$.
  This implies that $\abs{G}\le 4\vep+1<5\le x_0$.

  Now we may assume that $n>0$. 
	Suppose for the sake of contradiction that $\abs{G}\ge x_{n}$.
  Observe that by Theorem~\ref{thm:nd}, for any vertex $v$, 
  $\nd(G)\le 2\nd(G-v)+1<2\cdot 2^{8(n+\vep)+2}+1\le x_n\le \abs{G}$ and therefore
  there is a vertex $v$ having a twin.
  Then $\nd(G)=\nd(G-v)< 2^{8(n+\vep)+2}<2^{8n+10}$
  and therefore $G$ has a twin class $C$ with $\abs{C}>\abs{G}/2^{8n+10}$.

  Note that $\abs{C}>x_{n-1}\ge x_0\ge 5$.   Let $x$, $z$ be distinct vertices in $C$.
	\begin{itemize}
		\item If $C$ is a clique of true twins in $G$,
		then 
    $(G*x)[C]$ is an attached star in $G*x$.
    Let $G':=G*x$.
		\item If $C$ is an independent set of false twins in $G$,
		then since the vertices in $C$ are nonisolated in~$G$,
		there is some $y\in N_G(C,V(G)\setminus C)$.
    Then $C$ is a clique of true twins in $G*y$
    and $(G*y*x)[C]$ is an attached star in $G*y*x$.
    Let $G':=G*y*x$.
	\end{itemize}
	Let $S:=C\setminus\{z\}$, $H:=G-z$, and $H':=G'-z$. 
  Then in both cases, $H'$ is locally equivalent to~$H$,
  $H'[S]$ is an attached star in $H'$, and $G'-C=H'-S$.
	By Proposition~\ref{prop:attachedstar} and Theorem~\ref{thm:avgvertexminor}
	we deduce that
	\[ \ero(H)=\ero(H')\ge \ero(H'-S)=\ero(G'-C)>\ero(G')-1=\ero(G)-1\ge \vep+n-1, \]
	thus $H'-S$ contains some induced-subgraph-minimal graph of average cut-rank larger than $\vep+n-1$,
  say $F$, as an induced subgraph.
  Note that $F$ has no isolated vertices because deleting isolated vertices does not change the average cut-rank.
	By the induction hypothesis, $F$ has less than $x_{n-1}$ vertices.
	Then, $\ero(F)$ is a rational number larger than $\vep+n-1$ whose denominator divides $2^{x_{n-1}-1}$,
	so by Theorem~\ref{thm:avgvertexminor} we see that
	\[ \ero(H'-S)\ge\ero(F)\ge\vep+n-1+\frac{1-\{2^{x_{n-1}}\vep/2\}}{2^{x_{n-1}}/2}. \]
	By Theorem~\ref{thm:avgvertexminor} and Proposition~\ref{prop:attachedstar}, we thus obtain
	\begin{align*}
    \vep+n\ge \ero(H)=\ero(H')&\ge \ero(H'-S)+1-2^{1-|S|}\\
    & \ge \vep+n-1 +\frac{1-\{2^{x_{n-1}}\vep/2\}}{2^{x_{n-1}}/2}+ 1-2^{1-|S|}.
	\end{align*}
  Thus, we deduce that 
  \[
  1-\abs{S}\ge -x_{n-1}+1+\log (1-\{2^{x_{n-1}}\vep/2\})
  \]
  and so $\abs{S}\le \lfloor x_{n-1}-\log (1-\{2^{x_{n-1}}\vep/2\})\rfloor$
  and $\abs{C}\le \lfloor x_{n-1}-\log (1-\{2^{x_{n-1}}\vep/2\})+1\rfloor$.
  This is a contradiction because $\abs{C}>\abs{G}/2^{8n+10}\ge x_n/2^{8n+10}$.
\end{proof}

\section{Average cut-rank and forbidden vertex-minors}\label{sec:forbidden}
\subsection{Forbidden vertex-minors}
By Corollary~\ref{cor:finitelist}, 
we can observe the following.
\begin{quote}
  Let $\mac{C}$ be a class of graphs closed under taking vertex-minors. If $\mac{C}$ has bounded average cut-rank, then there exists a finite list of graphs $G_1$, $G_2$, $\ldots$, $G_m$
such that
a graph $G$ is in $\mathcal C$
if and only if
$G$ has no vertex-minor isomorphic to $G_j$ for every $j=1,\ldots,m$.
\end{quote}
A minimal such list is called a \emph{list of forbidden vertex-minors} for $\mathcal C$.
A list of forbidden vertex-minors is not unique, as one can replace a graph in the list with any locally equivalent graph.

But essentially the list is determined up to some equivalence relation.
For two classes $\mac{S}_1$ and $\mac{S}_2$ of graphs, we say that $\mac{S}_1$ is 
\emph{locally equivalent} to $\mac{S}_2$, 
denoted by $\mac{S}_1\simeq\mac{S}_2$, if for every $G\in\mac{S}_1$ there is some $H\in\mac{S}_2$ isomorphic to a graph locally equivalent to $G$ and for every $H\in\mac{S}_2$ there is some $G\in\mac{S}_1$ isomorphic to a graph locally equivalent to $H$. 
Then we can easily verify that the relation $\simeq$ is an equivalence relation and for every class of graphs closed under taking vertex-minors, the list of forbidden vertex-minors for $\mathcal C$ is determined up to local equivalence. As the list is an antichain with respect to the vertex-minor relation, 
every list of forbidden vertex-minors for $\mathcal C$ 
has the same size.

Let $\mac{L}_{\le \alpha}$ be the class of all graphs $H$ satisfying $\ero(H)>\alpha$ and any proper vertex-minor of $H$ has average cut-rank at most $\alpha$, and let $\mac{L}_{<\alpha}$ be the class of all graphs $H$ satisfying $\ero(H)\ge \alpha$ and any proper vertex-minor of $H$ has average cut-rank smaller than $\alpha$. Then by Proposition~\ref{prop:avgdecompose}, 
every graph in $\mac{L}_{\le \alpha}$ or
$\mac L_{<\alpha}$ has average cut-rank smaller than $\alpha+1$.
By Corollary~\ref{cor:wqo}, both $\mac{L}_{\le \alpha}$ and $\mac{L}_{<\alpha}$ are finite.
We can also easily deduce that 
  \begin{quote}
a graph has average cut-rank larger than (or at least) $\alpha$ if and only if it contains a vertex-minor in $\mac{L}_{\le \alpha}$ (or $\mac{L}_{<\alpha}$, respectively).
  \end{quote}
Therefore 
for every $\alpha\ge0$, $\mac{L}_{\le\alpha}$ is locally equivalent to every list of forbidden vertex-minors for the class of graphs of average cut-rank at most $\alpha$.
Similarly, $\mac{L}_{<\alpha}$ is locally equivalent to every list of forbidden vertex-minors for the class of graphs of average cut-rank smaller than $\alpha$.

\subsection{Lower bound on the number of vertex-minor obstructions}
Recall that for every $\alpha\ge0$, every list of forbidden vertex-minors for 
the class of graphs of average cut-rank at most~$\alpha$ is finite and has the same size;
the same happens for the lists of forbidden vertex-minors for 
the class of graphs of average cut-rank smaller than~$\alpha$.
We shall show that, there is some universal constant $c>0$ such that for any $\vep\in[0,1)$ and nonnegative integer $n$, every list of forbidden vertex-minors for the class of graphs of average cut-rank at most (or smaller than) $\vep+n$  contains at least $2^{cn\log (n+1)}$ graphs.
To do so, we construct a set of at least $2^{cn\log (n+1)}$ vertex-minor-minimal graphs of average cut-rank larger than $\vep+n$, such that no two of them are locally equivalent to each other.
Then, we can obtain from this set another set of at least $2^{cn\log (n+1)}$ vertex-minor-minimal graphs of average cut-rank at least $\vep+n$ such that no two of them are locally equivalent to each other.
Let us start with several notions to make our arguments clearer.

For a graph $G$, let $\pi(G)$ denote the quotient graph of $G$ induced by $\equiv_G$. It is not difficult to see that a graph $F$ without isolated vertices is a forest if and only if $\pi(F)$ is a forest and every equivalence class of $(V(F),\equiv_F)$ induces an attached star in $F$. In this case, let $R(F)$ be the set of central vertices in the equivalence classes of $(V(F),\equiv_F)$. Then it is not difficult to check that $F[R(F)]$ is isomorphic to $\pi(F)$. We regard $\pi(F)$ as a weighted graph by assigning each vertex $C$ of $\pi(F)$ the weight $|C|$.

For two forests $F_1$ and $F_2$ without isolated vertices, we shall write $\pi(F_1)\cong \pi(F_2)$ if there is an isomorphism keeping weights from $\pi(F_1)$ to $\pi(F_2)$. From the definitions we can deduce the following easily.
\begin{lemma}\label{lem:idenforests}
  Two forests $F_1$ and $F_2$  without isolated vertices are isomorphic if and only if $\pi(F_1)\cong \pi(F_2)$.
\end{lemma}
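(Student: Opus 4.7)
The plan is as follows. The forward direction is immediate: any graph isomorphism $\sigma\colon F_1\to F_2$ preserves adjacencies and hence cut-ranks by Proposition~\ref{prop:cutrank}, so $\sigma$ sends equivalence classes of $\equiv_{F_1}$ bijectively to equivalence classes of $\equiv_{F_2}$ of the same size and sends edges between classes to edges between the corresponding image classes. This directly yields a weight-preserving isomorphism $\pi(F_1)\to\pi(F_2)$. The bulk of the argument is therefore to construct, from a given weight-preserving isomorphism $\phi\colon\pi(F_1)\to\pi(F_2)$, a graph isomorphism $\sigma\colon V(F_1)\to V(F_2)$.

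The key structural observation underpinning the construction is that in a forest $F$ without isolated vertices, every equivalence class $C$ of $(V(F),\equiv_F)$ induces an attached star, so it possesses a \emph{central vertex} $c_C$, with every other vertex of $C$ being a leaf of $F$ whose unique neighbor is $c_C$. In particular, all noncentral vertices of $C$ have no neighbors in $V(F)\setminus C$, so every edge of $F$ joining two distinct classes $C$ and $C'$ must be of the form $c_Cc_{C'}$. Equivalently, $F[R(F)]$ is precisely $\pi(F)$ (regarded as an unweighted graph), and $F$ is obtained from this copy of $\pi(F)$ by attaching, for each class $C$ of size at least $2$, exactly $|C|-1$ additional pendant leaves at $c_C$.

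Given $\phi$, we then define $\sigma$ as follows. Since $\phi$ is weight-preserving, $|\phi(C)|=|C|$ for every class $C$ of $F_1$. Fix a central vertex in each class of $F_1$ and in each class of $F_2$; send $c_C$ to the chosen central vertex of $\phi(C)$, and extend arbitrarily to a bijection between the noncentral vertices of $C$ and those of $\phi(C)$. Verifying that $\sigma$ is a graph isomorphism breaks into three cases for a pair of distinct vertices of $F_1$: both lying in the same class (the induced subgraph is an attached star with matched centers, so adjacency is preserved), both being central vertices of different classes (preserved because $\phi$ is an isomorphism between $\pi(F_1)$ and $\pi(F_2)$), and at least one being a noncentral vertex from a class different from the other's (always a non-edge on both sides, by attachedness). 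The only mild subtlety is that when $|C|=2$ and $F_1[C]$ is a $K_2$-component, the central vertex is not canonically determined, but any choice works since the resulting map is still a graph isomorphism; I do not anticipate any substantive obstacle beyond this bookkeeping.
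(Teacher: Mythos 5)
The paper provides no proof for this lemma, merely stating that it can be deduced easily from the definitions and the preceding structural observations (attached-star classes, $F[R(F)]\cong\pi(F)$). Your write-up is a correct and complete elaboration of exactly those observations, so it matches the paper's intended route rather than taking a different one.

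Two very minor remarks. First, invoking Proposition~\ref{prop:cutrank} for the forward direction is unnecessary and slightly misplaced: that proposition concerns local complementations, not isomorphisms. The forward direction is immediate because $\equiv_G$ admits the purely combinatorial characterization (twins, or leaf-plus-unique-neighbor) given right before Proposition~\ref{prop:equivcut}, and any graph isomorphism trivially preserves that characterization; no cut-rank computation is needed. Second, your handling of the $K_2$-component ambiguity is correct, and it is worth noting that this is the only situation where the central vertex fails to be canonical, since for $|C|\ge 3$ the center is the unique vertex of degree exceeding one in $F[C]$, for $|C|=1$ it is forced, and for $|C|=2$ with one vertex of degree $\ge 2$ in $F$ the center is again forced.
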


The following is another useful characterization of isomorphic forests.
\begin{lemma}[{{Bouchet~\cite[Corollary 5.4]{bouchet1988a}}}]\label{lemma:forests}For two forests $F_1$ and $F_2$, $F_1$ is isomorphic to $F_2$ if and only if $F_1$ is isomorphic to a graph locally equivalent to $F_2$.
\end{lemma}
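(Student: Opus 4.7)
The forward direction is immediate: $F_1\cong F_2$ implies $F_1$ is isomorphic to $F_2$, which is locally equivalent to itself via the empty sequence. For the nontrivial converse, assume $F_1\cong G$ for some graph $G$ locally equivalent to $F_2$; transport labels along the isomorphism so that $V(F_1)=V(G)=V(F_2)=V$. My plan is to apply Lemma~\ref{lem:idenforests} and deduce $F_1\cong F_2$ by showing $\pi(F_1)\cong\pi(F_2)$ as weighted graphs.

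First I would reduce to the case where $F_1$ and $F_2$ have no isolated vertices: local complementation preserves isolated vertices, so the isolated-vertex sets of $F_1$, $G$, and $F_2$ coincide and may be deleted. Proposition~\ref{prop:cutrank} then yields $\rho_{F_1}=\rho_{F_2}$, and Corollary~\ref{cor:equivlocal} yields that the equivalence relations $\equiv_{F_1}$ and $\equiv_{F_2}$ coincide. Hence $\pi(F_1)$ and $\pi(F_2)$ already share both vertex set (the $\equiv$-classes) and vertex weights (the class sizes); only the edge sets remain to be matched.

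To match the edges I would induct on $|V|$, with small base cases handled directly. Every forest without isolated vertices has a leaf, and a leaf is $\equiv$-equivalent to its unique neighbor, so there exists a class $C$ with $|C|\ge 2$. I would carefully choose a vertex $u\in C$ that is a leaf in $F_1$ and is not the center of $C$ in either $F_1$ or $F_2$, so that both $F_1-u$ and $F_2-u$ are again forests without isolated vertices on $V\setminus\{u\}$. Provided $F_1-u$ is isomorphic to a graph locally equivalent to $F_2-u$, the inductive hypothesis gives $F_1-u\cong F_2-u$, and re-attaching $u$ at the attachment site forced by the class $C$ produces $F_1\cong F_2$.

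The main obstacle is the descent step -- transferring the \emph{locally equivalent to} hypothesis from $(F_1,F_2)$ to $(F_1-u,F_2-u)$. Local equivalence is defined purely through local complementations, and the sequence from $G$ to $F_2$ may include complementations at $u$ whose interaction with the deletion of $u$ is nontrivial. The crucial observation enabling the descent is that local complementation at a vertex of degree at most $1$ is the identity; since $u$ is a leaf of $F_1\cong G$, one should be able to rearrange or discard the problematic complementations at $u$ and obtain a valid sequence on $V\setminus\{u\}$ witnessing the local equivalence of $F_1-u$ and $F_2-u$. Secondary subtleties -- matching the attachment site when $C$ is an isolated $K_2$, or when the centers of $C$ in $F_1$ and $F_2$ differ -- are resolved by exploiting the internal symmetry of $C$ and by reading off its structural type (attached star vs.\ isolated component) from $\rho_F$.
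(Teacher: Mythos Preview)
The paper does not supply its own proof of this lemma; it is quoted directly from Bouchet~\cite{bouchet1988a}, so there is no in-paper argument to compare against. Your proposal attempts a self-contained inductive proof using the paper's internal machinery, but it has a genuine gap at the descent step. You claim that because $u$ is a leaf of $F_1$, local complementations at $u$ in a witnessing sequence from $F_1$ to $F_2$ can be ``rearranged or discarded.'' This fails: although $u$ has degree $1$ in $F_1$, the intermediate graphs in the sequence may give $u$ arbitrarily large degree, so a later application of $*u$ is not the identity and cannot simply be dropped. What would actually secure the descent is Proposition~\ref{prop:elementary}: $F_1-u$ is locally equivalent to one of $F_2-u$, $(F_2*u)-u$, $(F_2\wedge uw)-u$; when $u$ is also a leaf of $F_2$ with neighbor $w$, the second of these equals $F_2-u$ and the third is isomorphic to $F_2-w$, which one can rule out by counting isolated vertices. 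You do not make this argument.

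There is a second, subtler gap in the re-attachment. Even granting $F_1-u\cong F_2-u$, you cannot conclude $F_1\cong F_2$ by extending the isomorphism via $u\mapsto u$: the isomorphism $\phi$ supplied by the inductive hypothesis need not carry the class $C\setminus\{u\}$ to itself---it may send it to a different class of the same size in $F_2-u$---so $\phi$ need not map the attachment vertex of $u$ in $F_1$ to that in $F_2$. You flag ``secondary subtleties'' but give no mechanism for forcing this alignment, and none is apparent within your framework; resolving it seems to require ideas closer to Bouchet's original proof rather than the deletion--induction scheme you propose.
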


For two graphs $G$ and $H$, $H$ is called an \emph{elementary} vertex-minor of $G$ if $H$ is a vertex-minor of $G$ and $|H|=|G|-1$. 
The following theorem of Bouchet~\cite{bouchet1988b} characterizes elementary vertex-minors of a graph up to local equivalence. 
Geelen and Oum~\cite{GO2009} provided a direct proof.
\begin{proposition}[{{Bouchet~\cite[Corollary 9.2]{bouchet1988b}}}]\label{prop:elementary}
  Let $v$ be a vertex of a graph $G$.
  If $H$ is a vertex-minor of $G$ with $V(H)=V(G)\setminus\{v\}$, then $H$ is locally equivalent to one of $G-v,(G*v)-v$, and $(G\wedge uv)-v$ for any $u$ adjacent to $v$ in $G$.
\end{proposition}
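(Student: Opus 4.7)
My plan is to proceed by induction on the length $n$ of a sequence of local complementations $G = G_0, G_1, \dots, G_n$ with $H = G_n - v$; every vertex-minor of $G$ with vertex set $V(G) \setminus \{v\}$ arises in this form, as noted immediately after the definition of vertex-minor in the paper.

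The base case $n = 0$ yields $H = G - v$ directly. For the inductive step, write $G_n = G_{n-1} * w$. When $w \neq v$, I would first verify, by comparing neighborhoods directly, the commutation identity $(G_{n-1} * w) - v = (G_{n-1} - v) * w$; the only edges flipped on either side are those among $N_{G_{n-1}}(w) \setminus \{v\}$, and these coincide. This identity implies that $G_n - v$ is locally equivalent to $G_{n-1} - v$, so the inductive hypothesis applied to $G_{n-1}$ puts $H$ in one of the three required local-equivalence classes.

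The crux is the case $w = v$. The inductive hypothesis on $G_{n-1} - v$ alone is insufficient, since $N_{G_{n-1}}(v)$ is required to compute $(G_{n-1} * v) - v$ but is lost upon deleting $v$. I would therefore strengthen the induction to track $G_{n-1}$ itself up to local equivalence via local complementations at vertices other than $v$, keeping $G_{n-1}$ in one of three canonical forms $G, G*v, G \wedge uv$. The key input is the pivot identity $G*u*v*u = G*v*u*v$ valid whenever $uv$ is an edge in $G$, already stated in the preliminaries. From this one derives
\[
(G \wedge uv) * v = G*u*v*u*v = G*v*u*v*v = G*v*u,
\]
and $G*v*u$ is locally equivalent to $G*v$ via the single local complementation at $u$, a vertex other than $v$. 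Together with the trivial identities $(G)*v = G*v$ and $(G*v)*v = G$, this shows that the three canonical forms are closed under lc at $v$ modulo non-$v$ local equivalence, closing the induction. Taking the deletion of $v$ in each of the three normal forms then produces the three candidate graphs $G - v, (G*v) - v, (G \wedge uv) - v$ in the statement.

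The main obstacle is the strengthened induction in the $w = v$ step: one must carefully manage how the non-$v$ local complementations that precede the final lc at $v$ interact with the pivot identity, since local complementations do not commute in general. Handling this commutation — essentially, repeatedly absorbing a prefix of non-$v$ lc's into the three canonical forms using the pivot identity — is the only delicate part of the argument, and it is precisely this issue that forces us to work with the three explicit normal forms rather than simpler invariants.
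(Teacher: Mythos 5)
The paper does not prove this proposition; it cites it from Bouchet and points to Geelen and Oum for a direct proof, so there is no textual proof in the paper to compare against. Evaluating your attempt on its own terms: the $w\neq v$ step via the identity $(G_{n-1}*w)-v=(G_{n-1}-v)*w$ is correct, and you are right that the $w=v$ step forces a strengthening of the induction hypothesis, since $G_{n-1}-v$ alone does not determine $(G_{n-1}*v)-v$. But the strengthened claim you need — that every graph locally equivalent to $G$ can be written as $F*z_1*\cdots*z_l$ with $F\in\{G,G*v,G\wedge uv\}$ and all $z_i\neq v$ — is only verified in the degenerate case $l=0$. Your three computations $(G)*v=G*v$, $(G*v)*v=G$, $(G\wedge uv)*v=G*v*u$ show that $F*v$ falls back into a canonical class when $F$ itself is canonical; the inductive step you actually face is $(F*z_1*\cdots*z_l)*v$ with $l\ge1$, and this is not addressed. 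Concluding that "the three canonical forms are closed under lc at $v$ modulo non-$v$ local equivalence" from the $l=0$ case is exactly where the argument breaks.

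This is not a routine detail to be deferred; it is the entire content of Bouchet's theorem. There is no simple commutation that moves $*v$ leftward past the $z_i$'s once they have altered $v$'s neighborhood. If $z_l$ is adjacent to $v$ in $F*z_1*\cdots*z_{l-1}$, the pivot identity rewrites $F*z_1*\cdots*z_l*v$ as $F*z_1*\cdots*z_{l-1}*v*z_l*v*z_l$, which now contains two occurrences of $*v$, and feeding the prefix $F*z_1*\cdots*z_{l-1}*v$ back into the purported normal form produces a word $F'*y_1*\cdots*y_m$ whose length $m$ is not controlled by $l$. No obvious measure decreases, so the "repeated absorption" you propose is not known to terminate. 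Bouchet's proof avoids word rewriting entirely by working with isotropic systems, and Geelen and Oum's direct argument is a different case analysis; neither follows the route sketched here. As written, your proposal identifies the right difficulty and handles the trivial base of it, but does not close the induction.
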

For a graph $G$, a vertex $v\in V(G)$, and an integer $k\ge0$, we denote by $G+_vK_{1,k}$ the graph obtained from the disjoint union of $G$ and $K_{1,k}$
by adding an edge between $v$ and the central vertex of $K_{1,k}$.
The following lemma is crucial for our construction.
\begin{lemma}\label{lemma:attach}
  Let $G\in\mac{L}_{\le\vep+n}$ and $d\ge1$ be the size of the largest attached star in $G$. Then there exists a unique positive integer $q_1=q_1(G)$ such that $G+K_{1,q_1}\in\mac{L}_{\le \vep+n+1}$ and $q_1\ge d$.
  Furthermore, for each $v\in V(G)$, there exists a unique positive integer $q_2=q_2(G,v)\in\{q_1-1,q_1\}$ such that $G+_vK_{1,q_2}\in\mac{L}_{\le\vep+n+1}$.
\end{lemma}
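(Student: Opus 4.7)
The plan rests on two formulas for average cut-rank of star-augmented graphs. First, Proposition~\ref{prop:avgdecompose} (equality case for disjoint union) combined with Lemma~\ref{lem:avgcliques} gives the exact expression $\ero(G+K_{1,q})=\ero(G)+1-2^{-q}$. Second, for the attached version I would prove a two-sided sandwich
\[\ero(G+K_{1,q})\le\ero(G+_v K_{1,q})\le\ero(G+K_{1,q+1}).\]
The lower bound follows by applying Proposition~\ref{prop:attachedstar} to the attached star $\{c,l_1,\ldots,l_q\}$ of size $q+1$ in $G+_v K_{1,q}$; the upper bound by decomposing $G+_v K_{1,q}$ as the symmetric difference of $G$ (with $c,l_1,\ldots,l_q$ added as isolated vertices) and the star $K_{1,q+1}$ on $\{v,c,l_1,\ldots,l_q\}$ centered at $c$ (with $V(G)\setminus\{v\}$ added as isolated vertices), then invoking Proposition~\ref{prop:avgdecompose}.

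Given these, I would define $q_1$ (respectively $q_2$) to be the smallest positive integer for which $\ero(G+K_{1,q_1})>\vep+n+1$ (respectively $\ero(G+_v K_{1,q_2})>\vep+n+1$). Strict monotonicity in $q$---immediate for $q_1$ from the exact formula, and for $q_2$ from the second part of Theorem~\ref{thm:avgvertexminor} since deleting the newest leaf removes a non-isolated vertex---gives both existence and uniqueness. For the bound $q_1\ge d$, apply Proposition~\ref{prop:falsetwins} to the $d-1$ pendant leaves of a largest attached star in $G$, which are false twins of degree $1$: this yields $\ero(G)\le\ero(G-u)+2^{1-d}\le\vep+n+2^{1-d}$ using $G\in\mac{L}_{\le\vep+n}$, so $\ero(G+K_{1,q})\le\vep+n+1$ whenever $q\le d-1$. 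For $q_2\in\{q_1-1,q_1\}$, the sandwich directly gives $\ero(G+_v K_{1,q_1})\ge\ero(G+K_{1,q_1})>\vep+n+1$ (so $q_2\le q_1$) and $\ero(G+_v K_{1,q_1-2})\le\ero(G+K_{1,q_1-1})\le\vep+n+1$ (so $q_2\ge q_1-1$), with the boundary case $q_1=1$ forcing $q_2=1$.

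For $G+K_{1,q_1}\in\mac{L}_{\le\vep+n+1}$, Proposition~\ref{prop:elementary} reduces the minimality check to the three forms $H-x$, $(H*x)-x$, $(H\wedge xy)-x$ for each $x$. When $x\in V(G)$, local operations preserve the disjoint star, so the result is (an elementary vertex-minor of $G$)$\,+K_{1,q_1}$, whose average cut-rank is $\le\vep+n+1-2^{-q_1}$ using $G\in\mac{L}_{\le\vep+n}$. When $x$ is a leaf $l_i$, local complementation at $l_i$ does nothing and pivoting along $cl_i$ swaps the labels of $c$ and $l_i$, so all three forms collapse to $G+K_{1,q_1-1}$, controlled by minimality of $q_1$. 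When $x=c$, the three forms give (up to local equivalence) $G$ plus isolated vertices, $G+K_{q_1}$, and $G+K_{1,q_1-1}$; the middle one is bounded via $\ero(G)\le\vep+n+2^{1-q_1}$ (from minimality of $q_1$), giving $\ero(G+K_{q_1})=\ero(G)+1-2^{1-q_1}\le\vep+n+1$.

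The analogous verification for $G+_v K_{1,q_2}\in\mac{L}_{\le\vep+n+1}$ handles most cases similarly: deleting a leaf or the center $c$ (possibly after local complementation or pivot) yields either $G+_v K_{1,q_2-1}$ or $G$ plus isolated vertices; deleting $u\in V(G)\setminus\{v\}$ via an operation that does not involve $v$ yields (elementary vertex-minor of $G$)$\,+_v K_{1,q_2}$, bounded by applying the sandwich to the smaller graph. The main obstacle I expect is the cases where $v$ is directly involved, namely $(H'*v)-v$, $(H'\wedge vc)-v$, and $(H'\wedge uv)-u$ for $u\in N_G(v)$. For each of these I plan to compute the resulting graph explicitly via Bouchet's pivot formula and either identify it up to local equivalence with a graph of the form $G'+K_{1,k}$ or $G'+_v K_{1,k}$ for some elementary vertex-minor $G'$ of $G$ and some $k\le q_2$, or else bound $\ero$ directly using Proposition~\ref{prop:falsetwins} on the $q_2+1$ false twins of neighborhood $N_G(v)$ that arise. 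In particular, when $u$ is a leaf of $G$ adjacent only to $v$, a direct computation shows $(H'\wedge uv)-u\cong(G-\{u,v\})+K_{1,q_2}$ with $v$ isolated, whose average cut-rank is $\le\vep+n+1-2^{-q_2}$, and when $u$ has another $G$-neighbor one can pivot along that edge instead, bypassing $v$ entirely.
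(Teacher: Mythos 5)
Your framework and the key reductions match the paper's proof closely. The sandwich
\[
\ero(G+K_{1,q})\le\ero(G+_v K_{1,q})\le\ero(G+K_{1,q+1})
\]
is a clean packaging of exactly the bounds the paper uses (Proposition~\ref{prop:attachedstar} for the lower bound, the symmetric-difference Proposition~\ref{prop:avgdecompose} for the upper bound), and from it you derive existence, uniqueness, and pinning of $q_1$ and $q_2$ just as the paper does. One stylistic difference: for the minimality of $G+K_{1,q_1}$ you invoke Proposition~\ref{prop:elementary} and case-analyze elementary vertex-minors, whereas the paper simply observes that any proper vertex-minor of the disconnected graph $G+K_{1,q_1}$ is the disjoint union of a vertex-minor of $G$ and a vertex-minor of $K_{1,q_1}$ (at least one proper), which is shorter. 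There is also a small slip in your leaf case: the pivot $(H\wedge l_i c)-l_i$ is isomorphic to $H-c$, which is $G$ plus $q_1$ isolated vertices, not $G+K_{1,q_1-1}$; the needed bound $\ero(G)\le\vep+n+1$ still holds (it is your inequality (\ref{eq:16})), so the conclusion survives.

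The genuine gap is the verification that $G+_vK_{1,q_2}\in\mac{L}_{\le\vep+n+1}$. You explicitly defer the cases $(H'*v)-v$, $(H'\wedge vc)-v$, and $(H'\wedge uv)-u$ for $u\in N_G(v)$, calling them \dd the main obstacle I expect\ee; these are precisely where the substance of this lemma lies, and they are not proved in your proposal. The paper resolves them as follows. For $(G'*v)-v$: in the resulting graph $H$, the center $c$ is adjacent to $N_G(v)\cup\{l_1,\ldots,l_{q_2}\}$, the $l_i$ remain pendant, and $H-S$ with $S=\{c,l_1,\ldots,l_{q_2}\}$ is an elementary vertex-minor of $G$; hence $\ero(H)\le\ero(H-S)+\ero(K_{1,q_2+d_G(v)})<\vep+n+1$ by Proposition~\ref{prop:avgdecompose}. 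For $(G'\wedge vc)-v$: after two further local complementations the result is shown to be isomorphic to $G+_vK_{1,q_2-1}$, which is bounded by minimality of $q_2$. Your proposed fallback of applying Proposition~\ref{prop:falsetwins} to \dd$q_2+1$ false twins of neighborhood $N_G(v)$\ee{} does not match the actual structure: after $(H'*v)-v$ the $q_2$ star-leaves are still pendant (false twins of degree one whose neighborhood is $\{c\}$, not $N_G(v)$), and there are $q_2$ of them, not $q_2+1$. The Proposition~\ref{prop:avgdecompose} route is the one that actually closes these cases, and without it spelled out the $G+_vK_{1,q_2}$ half of the lemma is unproved.
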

\begin{proof}First, we prove that
\begin{equation}\label{eq:16}
\ero(G)\le \vep+n+2^{1-d}\le\vep+n+1.
\end{equation}
Indeed, if $d=1$ then for any $u\in V(G)$ we have, by Proposition~\ref{prop:attachedstar},
\[ \ero(G)< \ero(G-u)+1\le \vep+n+1=\vep+n+2^{1-d}. \]
If $d>1$, then let $u$ be a leaf in an attached star of size $d$ in $G$. By Proposition~\ref{prop:falsetwins} and the fact that $G\in\mac{L}_{\le\vep+n}$, we have
\begin{equation*}
 \ero(G)\le \ero(G-u)+2^{1-d}\le \vep+n+2^{1-d}\le \vep+n+1,
\end{equation*}
and \eqref{eq:16} is proved. Hence, because $\ero(G)>\vep+n$, by Lemma~\ref{lem:avgcliques} and Proposition~\ref{prop:avgdecompose},
there is some $q_1\ge1$ such that for all $k\ge q_1$
\begin{equation}\label{eq:17}
\ero(G+K_{1,k})=\ero(G)+1-2^{-k}>\vep+n+1,
\end{equation}
and for all $1\le k<q_1$
\begin{equation}\label{eq:18}
 \ero(G+K_{1,k})=\ero(G)+1-2^{-k}\le \ero(G)+1-2^{1-q_1}\le \vep+n+1.
\end{equation}
Thus, since (by~\eqref{eq:16} and~\eqref{eq:18})
\[ \vep+n+2^{1-d}\ge \ero(G)>\vep+n+1-(1-2^{-q_1})=\vep+n+2^{-q_1}, \]
we obtain $q_1\ge d$. We show that $G+K_{1,q_1}\in\mac{L}_{\le\vep+n+1}$. Indeed, if $H$ is a proper vertex-minor of $G+K_{1,q_1}$, then $H$ is the disjoint union of $H_1$ and $H_2$ where $H_1$ is a vertex-minor of $G$ and $H_2$ is a vertex-minor of $K_{1,q_1}$ such that at least one of these two containments is proper. If $H_1$ is a proper vertex-minor of $G$, then since $G\in\mac{L}_{\le\vep+n}$,
\[ \ero(H)=\ero(H_1)+\ero(H_2)\le \vep+n+1-2^{-q_1}<\vep+n+1, \]
and if $H_2$ is a proper vertex-minor of $K_{1,q_1}$, then
\[ \ero(H)=\ero(H_1)+\ero(H_2)\le \ero(G)+1-2^{1-q_1}\le\vep+n+1. \]
Thus $G+K_{1,q_1}\in\mac{L}_{\le\vep+n+1}$. This proves the first claim.

Now let $v$ be a vertex of $G$. By Proposition~\ref{prop:attachedstar} and the construction of $q_1$, for all $k\ge q_1$, 
\[ \ero(G+_vK_{1,k})\ge \ero(G)+1-2^{-k}>\vep+n+1, \]
and for all $1\le k< q_1-1$, by~Proposition~\ref{prop:avgdecompose},
\[ \ero(G+_vK_{1,k})\le \ero(G)+1-2^{-1-k}\le \ero(G)+1-2^{1-q_1}\le\vep+n+1. \]
Because $G+_vK_{1,q_1-1}$ is a proper induced subgraph of $G+_vK_{1,q_1}$ and the average cut-rank is strictly monotone with respect to the induced subgraph relation by Theorem~\ref{thm:avgvertexminor}, there is a unique $q_2=q_2(G,v)\in\{q_1-1,q_1\}$ such that
\begin{align*}
\ero(G+_vK_{1,k})&>\vep+n+1\quad\text{ for all }k\ge q_2,\\
\ero(G+_vK_{1,k})&\le\vep+n+1\quad\text{ for all }1\le k<q_2.
\end{align*}
In the formation of $G':=G+_vK_{1,q_2}$, let $x$ be the central vertex of $K_{1,q_2}$ that is adjacent to $v$ and $S:=V(K_{1,q_2})$. We show that $G'\in \mac{L}_{\le\vep+n+1}$. Indeed, suppose for the contrary that $H$ is an elementary vertex-minor of $G'$ with $V(G')=V(H)\cup\{u\}$ such that $\ero(H)>\vep+n+1$. By Proposition~\ref{prop:elementary}, $H$ is locally equivalent to one of $G'-u,(G'*u)-u$, and $(G'\wedge uw)-u$ for any $w$ adjacent to $u$ in $G'$. We may assume without loss of generality that $H$ is one of these graphs. There are three cases to consider.
\begin{enumerate}
\item If $H=G'-u$, then $u$ belongs to one of $V(G)\setminus\{v\}$, $\{v\}$, $\{x\}$, and $S\setminus\{x\}$.
\begin{enumerate}
\item If $u\in V(G)\setminus\{v\}$ then $H=(G-u)+_vK_{1,q_2}$. Because $\ero(G-u)\le \vep+n$, we have, by Proposition~\ref{prop:avgdecompose},
\[ \vep+n+1<\ero(H)\le\ero(G-u)+\ero(K_{1,q_2+1})< \vep+n+1, \]
a contradiction.
\item If $u=v$ then $H=(G-v)+K_{1,q_2}$. Similarly we obtain a contradiction.
\item If $u=x$ then $H$ is the disjoint union of $G$ with $q_2$ isolated vertices, so $H$ and $G$ have the same average cut-rank which is smaller than $\vep+n+1$, a contradiction.
\item If $u\in S\setminus\{x\}$ then $H=G+_vK_{1,q_2-1}$ which has average cut-rank smaller than $\vep+n+1$ by the definition of $q_2$, a contradiction.
\end{enumerate}
\item If $H=(G'*u)-u$, then from the first case we may assume that $u$ is not a leaf in $G'$, hence $u\not\in S\setminus\{x\}$. There are three subcases to consider.
\begin{enumerate}
\item If $u\in V(G)\setminus\{v\}$ then $H=( (G*u)-u)+_vK_{1,q_2}$, which leads to a contradiction.
\item If $u=v$ then $H-S$ is an elementary vertex-minor of $G$, $N_H(x)=(S\setminus\{x\})\cup N_G(v)$, and $H[S]$ is an attached star of $H$ of size $q_2+1$ with the central vertex $x$, so by Proposition~\ref{prop:avgdecompose},
\[ \vep+n+1<\ero(H)\le\ero(H-S)+\ero(K_{1,q_2+d_G(v)})<\vep+n+1, \]
a contradiction.
\item If $u=x$ then $H*z$ is isomorphic to $G+_vK_{1,q_2-1}$ where $z$ is a vertex in $S\setminus\{x\}$, thus has average cut-rank smaller than $\vep+n+1$, a contradiction.
\end{enumerate}
\item If $H=(G'\wedge uw)-u$, then we may assume that $u$ is neither a leaf nor a neighbor of a leaf in $G'$, because otherwise $w$ either is the unique neighbor of $u$ in $G'$ or can be chosen to be a leaf adjacent to $u$, and so $H=(G'\wedge uw)-u$ is isomorphic to $G'-w$, returning to the first case. 
There are two subcases to consider.
\begin{enumerate}
\item If $u\in V(G)\setminus\{v\}$ then we may assume that $w\ne v$ (if there is no other choice then $u$ is a leaf in $G'$).
Now $H-S$ is an elementary vertex-minor of $G$ and $H=(H-S)+_vK_{1,q_2}$. We obtain a contradiction.
\item If $u=v$ then because $G$ has no isolated vertices, we may choose $w=x$. Then $H[(V(G)\setminus\{v\})\cup \{x\}]$ is isomorphic to $G$ via some isomorphism bringing $x$ to $v$ and fixing every vertex in $V(G)\setminus\{v\}$. Furthermore, in $H$, $S\setminus\{x\}$ is an independent set and complete to $N_G(v)\cup\{x\}$ as well as anticomplete to $V(G)\setminus(N_G(v)\cup\{x\})$. Thus, for some $z\in S\setminus\{x\}$ we have $H*x*z$ is isomorphic to $G+_{v}K_{1,q_2-1}$, which brings a contradiction.
\end{enumerate}
\end{enumerate}
Therefore $G+_vK_{1,q_2}\in\mac{L}_{\le\vep+n+1}$, completing the proof of the lemma.
\end{proof}
Now we come to the construction. Let $\mac{F}_{\vep}:=\{K_{1,\lfloor1-\log(1-\vep)\rfloor}\}$ for all $\vep\in[0,1)$, and for all integers $k\ge0$,
\begin{align*}
\mac{F}_{\vep+2k+1}&:=\{F+K_{1,q_1(F)}:F\in\mac{F}_{ \vep+2k}\},\\
\mac{F}_{\vep+2k+2}&:=\{(F+K_{1,q_1(F)})+_vK_{1,q_2(F+K_{1,q_1(F)},v)}:F\in\mac{F}_{\vep+2k},v\in R(F)\},
\end{align*}
where $q_1(F)$ and $q_2(F,v)$ are defined as in Lemma~\ref{lemma:attach}.
Note that no graphs in $\mac{F}_{\vep+n}$ have isolated vertices.
\begin{corollary}\label{cor:minimal}$\mac{F}_{\vep+n}\subseteq \mac{L}_{\le\vep+n}$ for all $n\ge0$.
\end{corollary}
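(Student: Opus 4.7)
The argument is a straightforward induction on $n\ge 0$ whose entire force is supplied by Lemma~\ref{lemma:attach}; no new ideas beyond bookkeeping are needed.

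For the base case $n=0$, the set $\mac{F}_{\vep}$ consists of the single star $K_{1,k_0}$ with $k_0=\lfloor 1-\log(1-\vep)\rfloor$, which is at least $1$ since $\vep\in[0,1)$. The threshold $k_0$ is chosen precisely so that
\[
1-2^{1-k_0}\le \vep<1-2^{-k_0},
\]
and Lemma~\ref{lem:avgcliques} then gives $\ero(K_{1,k_0})=1-2^{-k_0}>\vep$. For minimality, I would note that every vertex-minor of $K_{1,k_0}$ is, up to local equivalence, an induced subgraph of either $K_{1,k_0}$ or of $K_{k_0+1}$; hence every proper vertex-minor has at most $k_0$ vertices and, by Lemma~\ref{lem:avgcliques} together with Theorem~\ref{thm:avgvertexminor}, has average cut-rank at most $1-2^{1-k_0}\le \vep$. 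Thus $K_{1,k_0}\in \mac{L}_{\le \vep}$.

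For the inductive step I would split on the parity of $n$, applying the two halves of Lemma~\ref{lemma:attach} in turn. When $n=2k+1$, every $G\in\mac{F}_{\vep+n}$ has the form $G=F+K_{1,q_1(F)}$ with $F\in\mac{F}_{\vep+2k}\subseteq\mac{L}_{\le\vep+2k}$ by the inductive hypothesis, and the first assertion of Lemma~\ref{lemma:attach} immediately yields $G\in\mac{L}_{\le\vep+2k+1}$. When $n=2k+2$, every $G\in \mac{F}_{\vep+n}$ has the form $G=F'+_vK_{1,q_2(F',v)}$ where $F'=F+K_{1,q_1(F)}$, $F\in\mac{F}_{\vep+2k}$, and $v\in R(F)\subseteq V(F)\subseteq V(F')$. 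The odd case just handled places $F'\in \mac{L}_{\le\vep+2k+1}$, and the second assertion of Lemma~\ref{lemma:attach}, applied to $F'$ and $v$, then places $G$ in $\mac{L}_{\le\vep+2k+2}$.

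I do not anticipate any real obstacle: the technical content is entirely inside Lemma~\ref{lemma:attach}, and the role of the corollary is simply to chain its two conclusions correctly along the two-step recursion defining $\mac{F}_{\vep+n}$. The only cosmetic point is to check that $R(F)\subseteq V(F')$ in the even step, which is immediate from the definition of the disjoint-union operation and the fact that $R(F)$ is a subset of $V(F)$.
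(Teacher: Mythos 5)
Your argument is correct and follows the same route as the paper: verify the base case $K_{1,\lfloor1-\log(1-\vep)\rfloor}\in\mac{L}_{\le\vep}$ and then induct using the two parts of Lemma~\ref{lemma:attach} along the two-step recursion defining $\mac{F}_{\vep+n}$. The paper states this in a single line, whereas you spell out the base-case minimality check; the substance is the same.
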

\begin{proof}By Lemma~\ref{lem:avgcliques}, $K_{1,\lfloor1-\log(1-\vep)\rfloor}\in\mac{L}_{\le\vep}$ for all $\vep\in[0,1)$. The conclusion thus follows inductively by Lemma~\ref{lemma:attach}.
\end{proof}

Here is another consequence of Lemma~\ref{lemma:attach}.
\begin{corollary}\label{cor:larger}For all $F\in\mac{F}_{\vep+2n}$ and $v\in R(F)$, $q_2(F+K_{1,q_1(F)},v)$ is at least $q_1(F)$, hence at least the maximum weight in $\pi(F)$.
	
\end{corollary}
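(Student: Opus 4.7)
The plan is to apply Lemma~\ref{lemma:attach} twice, once to $G := F + K_{1,q_1(F)}$ and once to $F$ itself. Note that $G \in \mac{F}_{\vep+2n+1}$ by the construction, and hence $G \in \mac{L}_{\le \vep+2n+1}$ by Corollary~\ref{cor:minimal}; similarly $F \in \mac{L}_{\le \vep+2n}$, so both invocations are legitimate.

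For the first assertion, I would observe that the newly added copy of $K_{1,q_1(F)}$ in $G$ is an attached star of $G$ of size $q_1(F)+1$, because as a whole component of the disjoint union its $q_1(F)$ leaves are anticomplete to $V(F)$. Thus the largest attached-star size in $G$ satisfies $d_G \ge q_1(F)+1$, and applying Lemma~\ref{lemma:attach} to $G$ gives $q_1(G) \ge d_G \ge q_1(F)+1$. Since $q_2(G,v) \in \{q_1(G)-1, q_1(G)\}$ by the same lemma, we obtain $q_2(F + K_{1,q_1(F)}, v) \ge q_1(F)$, which is the first claim.

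For the bound by the maximum weight in $\pi(F)$, I would first verify by induction on $n$ that every $F \in \mac{F}_{\vep+2n}$ is a forest without isolated vertices: each construction step either disjoint-unions a star or attaches a new star at an existing vertex by a single edge, and both preserve these properties. In such an $F$, Proposition~\ref{prop:equivcut} forces every nonsingleton equivalence class of $\equiv_F$ to be the vertex set of an attached star, because a clique of true twins of size at least $3$ would create a triangle, false twins sharing at least $2$ common neighbors would create a $4$-cycle, and false twins sharing a single neighbor $z$ would have $z$ equivalent to them (forcing $z$ into the class and breaking independence). Consequently the maximum weight in $\pi(F)$ equals the largest attached-star size $d_F$ in $F$, and a second application of Lemma~\ref{lemma:attach}, this time to $F$, gives $q_1(F) \ge d_F$, completing the proof.

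The only delicate step is ruling out the non-attached-star cases of Proposition~\ref{prop:equivcut} for the forest $F$; everything else is direct bookkeeping using Lemma~\ref{lemma:attach}, together with the observation that attaching $K_{1,q_1(F)}$ to $F$ automatically creates a strictly larger attached star than anything $F$ already contains.
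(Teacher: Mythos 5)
Your proof is correct and takes essentially the same route as the paper: both use that $F+K_{1,q_1(F)}$ contains an attached star of size $q_1(F)+1$, so $q_1(F+K_{1,q_1(F)})\ge q_1(F)+1$ by Lemma~\ref{lemma:attach}, and then combine $q_2\ge q_1-1\ge q_1(F)\ge d_F=$ the maximum weight in $\pi(F)$. The paper leaves the identification of the maximum weight in $\pi(F)$ with the largest attached-star size implicit (it is recorded as ``not difficult to see'' just before Lemma~\ref{lem:idenforests} that every equivalence class of a forest without isolated vertices induces an attached star), whereas you re-derive this from Proposition~\ref{prop:equivcut}.
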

\begin{proof}Let $H:=F+K_{1,q_1(F)}$. By Lemma~\ref{lemma:attach}, $q_1(F)$ is at least the maximum weight in $\pi(F)$, so $q_1(F)+1$ is the largest weight in $\pi(H)$, which implies that $q_1(H)\ge q_1(F)+1$. Also by Lemma~\ref{lemma:attach}, $q_2(H,v)\ge q_1(H)-1$, and thus $q_2(H,v)$ is at least $ q_1(F)$, hence at least the maximum weight in $\pi(F)$.
\end{proof}
Now we account for the restriction $v\in R(F)$ in the definition of $\mac{F}_{\vep+2n+2}$: Because $q_2(H,v)$ can possibly be equal to $q_1(F)$, to deduce Lemmas~\ref{lemma:diffweight}~and~\ref{lemma:isomorphic} we require that the copy of $K_{1,q_2(H,v)}$ attached to $v$ lies in a component different from a copy of $K_{1,q_1(F)}$.
\begin{lemma}\label{lemma:diffweight}
	For every $F\in\mac{F}_{\vep+n}$, $\pi(F)$ has exactly $n+1$ vertices, and in $\pi(F)$,
	no positive integer appears more than twice as a weight;
	if some weight appears twice then
	the corresponding vertices are in different components
	and one of them is the smallest weight in its component.
\end{lemma}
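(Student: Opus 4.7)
I plan to prove this by induction on $n \geq 0$, following the recursive construction of $\mac{F}_{\vep + n}$. The base case $n = 0$ is immediate: $\mac{F}_\vep$ consists of the single star $K_{1,\lfloor 1-\log(1-\vep)\rfloor}$, whose quotient graph has one vertex, so the conditions hold vacuously.

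For odd $n$, every $F' \in \mac{F}_{\vep + n}$ has the form $F + K_{1,q_1(F)}$ for some $F \in \mac{F}_{\vep + n-1}$. Since the added star is disjoint from $F$ and is itself an attached star of $F'$, $\pi(F')$ is $\pi(F)$ together with one new isolated vertex of weight $q_1(F)+1$, giving the required $n+1$ vertices. Lemma~\ref{lemma:attach} guarantees $q_1(F) \geq d$, where $d$ is the largest attached star size in $F$, i.e.\ the maximum weight in $\pi(F)$; so $q_1(F)+1$ strictly exceeds all existing weights and introduces no new duplicate. Because the components of $\pi(F)$ remain components of $\pi(F')$, the weight condition follows directly from the induction hypothesis.

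For even $n = 2k+2$, every $F'' \in \mac{F}_{\vep + n}$ has the form $H +_v K_{1,q_2(H,v)}$ with $H = F + K_{1,q_1(F)}$, $F \in \mac{F}_{\vep + n-2}$, and $v \in R(F)$. The key structural claim is that $\pi(F'')$ is $\pi(F)$ together with exactly two new vertices representing the newly attached stars $V(K_{1,q_1(F)})$ and $V(K_{1,q_2(H,v)})$, of weights $q_1(F)+1$ and $q_2(H,v)+1$; to prove this I would verify that adding the single edge from $v$ to the center $c$ of $K_{1,q_2(H,v)}$ neither merges $v$'s class with $c$'s (they are not twins since their neighborhoods have disjoint supports on external vertices, and neither is a leaf with the other as unique neighbor) nor produces any other new equivalence. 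Lemma~\ref{lemma:attach} gives $q_1(F)+1$ strictly greater than every weight in $\pi(F)$, and Corollary~\ref{cor:larger} yields $q_2(H,v)+1 \geq q_1(F)+1$, so the only possible weight collision in $\pi(F'')$ is $q_1(F)+1 = q_2(H,v)+1$. In that case the class of $K_{1,q_1(F)}$ sits alone in a singleton component of $\pi(F'')$ and is trivially the smallest-weight class there, while the class of $K_{1,q_2(H,v)}$ lies in the component of $v$; combined with the induction hypothesis for any inherited duplicates from $\pi(F)$, this yields the two required conditions. I anticipate the main obstacle to be the careful case check that the newly inserted edge $vc$ creates no unintended equivalences or component mergers, so that existing weight duplicates from $\pi(F)$ continue to satisfy the ``different components, one minimal'' property in $\pi(F'')$.
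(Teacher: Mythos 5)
Your proof is correct and follows essentially the same inductive approach as the paper: induction on $n$ through the two cases of the recursive construction, using Lemma~\ref{lemma:attach} to see that $q_1(F)+1$ exceeds all existing weights and Corollary~\ref{cor:larger} to see that $q_2(H,v)+1\ge q_1(F)+1$, so the only possible new weight collision is between the two freshly added stars, which by construction lie in different components since $v\in R(F)$. You are somewhat more explicit than the paper in verifying that the added edge $vc$ does not merge equivalence classes and hence that $\pi(F'')$ really is $\pi(F)$ plus two new vertices, a step the paper leaves implicit.
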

\begin{proof}We proceed by induction on $n$. When $n=0$ the lemma is trivial. Assuming that the lemma is true for $n=2k$, we shall show that it is also true for $n=2k+1$ and $2k+2$. Let $F\in\mac{F}_{\vep+2k}$ and consider $H:= F+K_{1,q_1(F)}\in\mac{F}_{\vep+2k+1}$. Set $S:=V(K_{1,q_1(F)})$. By Lemma~\ref{lemma:attach}, $q_1(F)$ is at least the maximum weight in $\pi(F)$, so the conclusion holds for $\pi(H)$ because it also holds for $\pi(F)$, which is done by the induction hypothesis.
	
	Now consider $G:=H+_vK_{1,q_2(H,v)}\in\mac{F}_{\vep+2k+2}$ for $v\in R(F)$. By Corollary~\ref{cor:larger}, $q_2(H,v)$ is at least $q_1(F)$ as well as the maximum weight in $\pi(F)$. So, since $v\in R(F)$, the weights in $\pi(F)$ are preserved in $\pi(G)$, hence by the induction hypothesis the conclusion for $\pi(G-S)$ indeed holds. Thus, to verify the conclusion for $\pi(G)$, it is enough to check two (unique) copies of $K_{1,q_2(H,v)}$ and $K_{1,q_1(F)}$ in $G$. But this is easy, since if $q_2(H,v)>q_1(F)$ then we are done, and if $q_2(H,v)=q_1(F)$ then those two copies must be in different components because $v\in R(F)$.
\end{proof}
\begin{lemma}\label{lemma:isomorphic}For every $\vep\in[0,1)$ and $n\ge0$, no two distinct forests in $\mac{F}_{\vep+n}$ are isomorphic.
\end{lemma}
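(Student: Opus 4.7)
The plan is to proceed by induction on $n$, with the base case $n=0$ trivial since $\mac{F}_\vep=\{K_{1,\lfloor 1-\log(1-\vep)\rfloor}\}$ consists of a single graph. Assuming the lemma holds for $n=2k$, I would derive it for both $n=2k+1$ and $n=2k+2$. By Lemma~\ref{lem:idenforests}, it suffices to show that any distinct $F_1,F_2\in \mac{F}_{\vep+n}$ yield non-isomorphic weighted quotient graphs $\pi(F_1)\not\cong\pi(F_2)$.

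For the odd step $n=2k+1$, each $F_i=F'_i+K_{1,q_1(F'_i)}$ for some $F'_i\in\mac{F}_{\vep+2k}$, so $\pi(F_i)$ is obtained from $\pi(F'_i)$ by adjoining a new isolated vertex $B_i$ of weight $q_1(F'_i)+1$. Lemma~\ref{lemma:attach} ensures $q_1(F'_i)$ is at least the size of the largest attached star in $F'_i$, so $B_i$ has weight strictly exceeding every weight in $\pi(F'_i)$ and is therefore the unique vertex of maximum weight in $\pi(F_i)$. Any weighted isomorphism between $\pi(F_1)$ and $\pi(F_2)$ must identify $B_1$ with $B_2$, and restricting to the complement gives an isomorphism $\pi(F'_1)\cong\pi(F'_2)$. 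The induction hypothesis yields $F'_1=F'_2$, and hence $F_1=F_2$.

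For the even step $n=2k+2$, each $F_i$ is built from $H_i:=F'_i+K_{1,q_1(F'_i)}$ by attaching a copy of $K_{1,q_2(H_i,v_i)}$ at some $v_i\in R(F'_i)$, so $\pi(F_i)$ arises from $\pi(F'_i)$ by adjoining two new vertices: an isolated $B_i$ of weight $q_1(F'_i)+1$ and a vertex $C_i$ of weight $q_2(H_i,v_i)+1$ adjacent only to the class of $v_i$. By Lemma~\ref{lemma:attach} and Corollary~\ref{cor:larger}, both new weights strictly exceed every weight of $\pi(F'_i)$. The key point is that $B_i$ is the unique isolated vertex of $\pi(F_i)$ of weight $q_1(F'_i)+1$ (the isolated vertices of $\pi(F'_i)$ all have smaller weight), and $C_i$ is the unique non-isolated vertex of $\pi(F_i)$ whose weight exceeds every weight of $\pi(F'_i)$; both identifications remain valid even when $q_2(H_i,v_i)=q_1(F'_i)$, since $B_i$ is isolated while $C_i$ has $v_i$ as a neighbor. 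Thus any iso $\varphi\colon\pi(F_1)\to\pi(F_2)$ forces $\varphi(B_1)=B_2$, $\varphi(C_1)=C_2$, and hence $\varphi(v_1)=v_2$. Restricting to $\pi(F_i)\setminus\{B_i,C_i\}=\pi(F'_i)$ yields $\pi(F'_1)\cong\pi(F'_2)$ sending $v_1$ to $v_2$; the induction hypothesis and the construction of $\mac{F}_{\vep+2k+2}$ then give $F_1=F_2$.

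The main obstacle is the identification of $B_i$ and $C_i$ inside $\pi(F_i)$ in the even case; this is handled by the isolation-versus-attachment dichotomy, which survives even the collision of their weights when $q_2(H_i,v_i)=q_1(F'_i)$. A secondary subtlety is tracking the correspondence between $v_1$ and $v_2$ through the chain of restrictions, but this is built into the identification of $C_i$ and its unique neighbor, and so passes cleanly to the inductive step.
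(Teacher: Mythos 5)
Your proof is correct and follows essentially the same route as the paper's: induct on $n$, pass to the weighted quotient graphs via Lemma~\ref{lem:idenforests}, use Lemma~\ref{lemma:attach} and Corollary~\ref{cor:larger} to show that the newly adjoined classes carry strictly dominant weights, and recover $F'_1=F'_2$ by restriction. The isolation-versus-attachment dichotomy you use to separate $B_i$ from $C_i$ when $q_2=q_1$ is exactly the role played in the paper by the fact that the new $K_{1,q_1}$-component is disjoint from the component $T_j$ hosting $C_j$ (the restriction $v\in R(F)$ in the construction is precisely what guarantees this). One small point in the even step: after concluding $F'_1=F'_2$ and that the restricted isomorphism sends the class of $v_1$ to the class of $v_2$, you still need that this forces $v_1=v_2$; this is where Lemma~\ref{lemma:diffweight} is really needed — the weight pattern it guarantees makes $\pi(F')$ rigid as a weighted graph, so the induced automorphism is trivial. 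Your phrase ``the construction of $\mac{F}_{\vep+2k+2}$ then give $F_1=F_2$'' glosses over this, much as the paper's brief appeal to ``labels in $T_j$ are distinct'' does, so you are not missing anything the paper supplies.
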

\begin{proof}When $n=0$ the lemma holds trivially. Assume that the lemma holds for $n=2k$, we show that it also holds for $n=2k+1$ and $n=2k+2$. Consider $H_j:=F_j+K_{1,q_1(F_j)}\in\mac{F}_{\vep+2k+1}$ where $F_j\in \mac{F}_{\vep+2k}$ for $j=1,2$ and suppose that $H_1$ and $H_2$ are isomorphic. Since $q_1(F_j)$ is at least the maximum weight in $\pi(H_j)$ and $|\pi(H_j)|=2k+2$ for $j=1,2$, necessarily $q_1(F_1)=q_1(F_2)$ and so $F_1$ must be isomorphic to $F_2$, implying $H_1$ and $H_2$ are isomorphic.

Now consider $G_j:=H_j+_{v_j}K_{1,q_2(H_j,v_j)}\in\mac{F}_{\vep+2k+2}$ for $v_j\in R(F_j)$ for $j=1,2$. Assume that $G_1$ and $G_2$ are isomorphic, then by Lemma~\ref{lem:idenforests} $\pi(G_1)\cong\pi(G_2)$. 
For $j=1,2$, let $T_j$ be the component in $G_j$ containing the attached star $K_{1,q_2(H_j,v_j)}$ so that $T_j$ is not the component isomorphic to $K_{1,q_1(F_j)}$ in $G_j$, by construction. By Corollary~\ref{cor:larger}, $q_2(H_j,v_j)$ is at least the maximum weight in $\pi(F_j)$, so by Lemma~\ref{lemma:diffweight} $q_2(H_j,v_j)$ is at least the maximum weight in $\pi(T_j)$, for $j=1,2$. Thus, necessarily $q_2(H_1,v_1)=q_2(H_2,v_2)$ and $\pi(T_1)\cong\pi(T_2)$, which leads to $\pi(G_1-V(T_1))\cong\pi(G_2-V(T_2))$. Hence, by deleting the vertex with label $q_2(H_1,v_1)=q_2(H_2,v_2)$ in each $\pi(G_j)$, we obtain $\pi(H_1)\cong\pi(H_2)$, so by Lemma~\ref{lem:idenforests} there is an isomorphism $\varphi$ from $H_1$ to $H_2$. Thus, because the labels in $T_j$ are distinct for $j=1,2$ by Lemma~\ref{lemma:diffweight}, we have $\varphi(v_1)=v_2$. Therefore $G_1$ and $G_2$ are isomorphic and the proof is completed.
\end{proof}
Combining Lemmas~\ref{lemma:diffweight}~and~\ref{lemma:isomorphic}, we deduce the number of pairwise nonisomorphic graphs in $\mac{F}_{\vep+n}$ for all $\vep\in[0,1)$ and $n\ge0$. We employ the standard notation $k!!=\prod_{1\le j\le k,j\equiv k\pmod 2}j$ for $k=1,2,\ldots$ and the convention $(-1)!!=0!!=1$.
\begin{corollary}\label{cor:cardinality}For every $\vep\in[0,1)$ and $k\ge0$, the number of pairwise nonisomorphic graphs in $\mac{F}_{\vep+2k}$ and $\mac{F}_{\vep+2k+1}$ is
\[|\mac{F}_{\vep+2k}|=|\mac{F}_{\vep+2k+1}|=(2k-1)!!.\]
\end{corollary}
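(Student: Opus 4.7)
My plan is to prove this corollary by induction on $k$, applying the recursive definition of $\mac{F}_{\vep+n}$ together with Lemmas~\ref{lemma:diffweight} and~\ref{lemma:isomorphic}. For the base case $k = 0$, the definition gives $\mac{F}_{\vep} = \{K_{1,\lfloor1-\log(1-\vep)\rfloor}\}$, and the recursion $\mac{F}_{\vep+1} = \{F + K_{1,q_1(F)} : F \in \mac{F}_{\vep}\}$ immediately yields $|\mac{F}_{\vep}| = |\mac{F}_{\vep+1}| = 1 = (-1)!!$.

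For the inductive step, I would assume $|\mac{F}_{\vep+2k}| = (2k-1)!!$. The map $F \mapsto F + K_{1,q_1(F)}$ is a surjection from $\mac{F}_{\vep+2k}$ onto $\mac{F}_{\vep+2k+1}$ by construction, and Lemma~\ref{lemma:isomorphic} guarantees that distinct inputs produce non-isomorphic images, so this map is a bijection and $|\mac{F}_{\vep+2k+1}| = (2k-1)!!$. Next, the construction yields a surjection
\[
  (F, v) \mapsto (F + K_{1,q_1(F)}) +_v K_{1,q_2(F+K_{1,q_1(F)}, v)}
\]
from the index set $\{(F, v) : F \in \mac{F}_{\vep+2k},\, v \in R(F)\}$ onto $\mac{F}_{\vep+2k+2}$. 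By Lemma~\ref{lemma:diffweight}, $|\pi(F)| = 2k + 1$ for every $F \in \mac{F}_{\vep+2k}$, and since each equivalence class of $(V(F), \equiv_F)$ contributes exactly one central vertex to $R(F)$, we have $|R(F)| = |\pi(F)| = 2k + 1$. Thus the index set has $(2k + 1)(2k - 1)!! = (2k+1)!!$ elements.

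To close the induction, it remains to show this second surjection is also injective, that is, distinct pairs yield non-isomorphic outputs. This is exactly the content of Lemma~\ref{lemma:isomorphic} applied to $n = 2k + 2$: its proof exploits Lemma~\ref{lemma:diffweight} and Corollary~\ref{cor:larger} to show that any isomorphism between the outputs associated to $(F_1, v_1)$ and $(F_2, v_2)$ forces $F_1 = F_2$ as elements of $\mac{F}_{\vep+2k}$ and then identifies $v_1$ with $v_2$. Granting this, $|\mac{F}_{\vep+2k+2}| = (2k+1)!!$ and the induction is complete. The step I expect to be most delicate is verifying that for a fixed $F$, no nontrivial automorphism of $F + K_{1,q_1(F)}$ sends one element of $R(F)$ to another; this is precisely the weight-asymmetry that Lemma~\ref{lemma:diffweight} was designed to provide, so the heart of the argument really is the clean equality $|R(F)| = 2k+1$ combined with the multiplicative recurrence $(2k-1)!! \cdot (2k+1) = (2k+1)!!$.
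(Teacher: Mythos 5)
Your proposal is correct and takes essentially the same approach as the paper, which simply states that the corollary follows from combining Lemmas~\ref{lemma:diffweight} and~\ref{lemma:isomorphic} without giving further detail; you are spelling out the implicit argument, including the use of $|\pi(F)|=2k+1$ from Lemma~\ref{lemma:diffweight} to get $|R(F)|=2k+1$, and the use of Lemma~\ref{lemma:isomorphic} for injectivity of the construction map.
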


The next lemma describes properties of $\mac{L}_{\le\alpha}$ and $\mac{L}_{<\alpha}$ to be used later.
\begin{lemma}\label{lem:setminus}
	Let $\alpha>0$.
	Then the following statements hold.
	\begin{itemize}
		\item $\mac{L}_{<\alpha}\setminus\mac{L}_{\le \alpha}$
		is the class of all graphs without isolated vertices of average cut-rank exactly $\alpha$.
		\item If $G\in \mac{L}_{\le\alpha}\setminus\mac{L}_{<\alpha}$,
		then $G$ has a proper vertex-minor $H$ of average cut-rank exactly $\alpha$ in $\mac{L}_{<\alpha}$ such that $|G|-|H|\le 2$.
		If the equality holds then $H$ can be chosen so that
		$G$ is isomorphic to $H+K_2$.
	\end{itemize}
\end{lemma}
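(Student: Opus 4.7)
My plan has two parts. For the first bullet, I would verify both inclusions directly. If $G\in\mac{L}_{<\alpha}\setminus\mac{L}_{\le\alpha}$, then $\ero(G)\ge\alpha$ and every proper vertex-minor of $G$ has $\ero<\alpha$; the ``proper vertex-minor'' condition for membership in $\mac{L}_{\le\alpha}$ is then automatically satisfied, so the failure $G\notin\mac{L}_{\le\alpha}$ must come from violating $\ero(G)>\alpha$, forcing $\ero(G)=\alpha$. The absence of isolated vertices follows because deleting one would preserve $\ero$ and contradict $G\in\mac{L}_{<\alpha}$. For the converse, suppose $G$ has no isolated vertices and $\ero(G)=\alpha$. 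Since $\rho_G(\{v\})=0$ precisely when $v$ is isolated, Proposition~\ref{prop:cutrank} shows that isolation status is preserved by local complementations, so every graph locally equivalent to $G$ is also isolated-free. Hence, viewing any proper vertex-minor $H$ as an induced subgraph of a locally equivalent $G'$, the set $V(G')\setminus V(H)$ contains a non-isolated vertex of $G'$, and the strict part of Theorem~\ref{thm:avgvertexminor} gives $\ero(H)<\ero(G)=\alpha$, establishing $G\in\mac{L}_{<\alpha}\setminus\mac{L}_{\le\alpha}$.

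For the second bullet, let $G\in\mac{L}_{\le\alpha}\setminus\mac{L}_{<\alpha}$. By the same argument as above, $G$ has no isolated vertices. Since $\ero(G)>\alpha$ and $G\notin\mac{L}_{<\alpha}$, some proper vertex-minor has $\ero\ge\alpha$; the upper bound from $G\in\mac{L}_{\le\alpha}$ pins this to $\alpha$. I would then choose $H$ to be a proper vertex-minor of $G$ lying in $\mac{L}_{<\alpha}$ with $|H|$ maximum; such $H$ exists because starting from any proper vertex-minor of $G$ with $\ero=\alpha$ and passing to a vertex-minor-minimal subgraph of average cut-rank $\ge\alpha$ produces a member of $\mac{L}_{<\alpha}$. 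Fix $G'$ locally equivalent to $G$ that contains $H$ as an induced subgraph, set $S:=V(G')\setminus V(H)$ with $k:=|S|$, and suppose $k\ge 2$. For each $u\in S$, the graph $G'-u$ is a proper vertex-minor of $G$ containing $H$ as an induced subgraph, so $\alpha=\ero(H)\le\ero(G'-u)\le\alpha$, forcing $\ero(G'-u)=\alpha$. The isolated vertices of $G'-u$, call them $T_u$, are exactly the leaves of $G'$ whose unique neighbor is $u$. If some $w\in T_u$ lay in $V(H)$, then $w$ would be isolated in $H$ as well (its only $G'$-neighbor $u$ lies outside $V(H)$), contradicting the first bullet applied to $H\in\mac{L}_{<\alpha}$. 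Hence $T_u\subseteq S\setminus\{u\}$, so $(G'-u)-T_u$ is a proper vertex-minor of $G$ with $\ero=\alpha$ and no isolated vertices, thus belongs to $\mac{L}_{<\alpha}$ by the first bullet; maximality of $|H|$ then forces $|T_u|\ge k-1$.

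The final step is a counting argument: the $T_u$ for distinct $u\in S$ are pairwise disjoint, because a leaf has a unique neighbor. Summing gives $k(k-1)\le\sum_{u\in S}|T_u|\le|S|=k$, so $k\le 2$. In the borderline case $k=2$ with $S=\{u_1,u_2\}$, equality forces $T_{u_1}=\{u_2\}$ and $T_{u_2}=\{u_1\}$, meaning $G'[S]\cong K_2$ and $S$ is anticomplete to $V(H)$ in $G'$; hence $G'=H+K_2$. Because local complementation acts component-wise on a disjoint union and the only graph locally equivalent to $K_2$ is $K_2$ itself, we obtain $G\cong H^*+K_2$ for some $H^*$ locally equivalent to $H$; replacing $H$ by $H^*$ is valid since $\mac{L}_{<\alpha}$ is preserved under local equivalence. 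The main subtlety I anticipate is carefully tracking isolated-vertex status when passing between $G$ and its locally equivalent representatives and justifying the containment $T_u\subseteq S$, as this containment is precisely what allows the disjointness counting to bite.
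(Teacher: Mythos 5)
Your proof is correct, and while the first bullet follows essentially the same reasoning as the paper (you directly unpack why $G\notin\mac{L}_{\le\alpha}$ forces $\ero(G)=\alpha$, where the paper takes a short detour through the forbidden-vertex-minor characterization), your argument for the second bullet takes a genuinely different route. The paper picks an arbitrary proper vertex-minor $H\in\mac{L}_{<\alpha}$ with $\ero(H)=\alpha$, sets $V(G')\setminus V(H)=\{v_0,\ldots,v_k\}$, uses one application of the strict part of Theorem~\ref{thm:avgvertexminor} to show $v_1,\ldots,v_k$ are isolated in $G'-v_0$, concludes that $G'[\{v_0,\ldots,v_k\}]$ is an attached star with center $v_0$, and then splits on whether $v_0$ is isolated in $G'-v_1$: if yes, $k=1$ and $G'=H+K_2$; if no, $G'-v_1$ has no isolated vertices and $\ero(G'-v_1)>\alpha$, contradicting $G\in\mac{L}_{\le\alpha}$. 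You instead maximize $|H|$ among proper vertex-minors of $G$ in $\mac{L}_{<\alpha}$, and bound $k=|V(G')\setminus V(H)|$ by a disjointness count on the sets $T_u$ of vertices isolated in $G'-u$: the containment $T_u\subseteq S\setminus\{u\}$ (which you correctly justify via the no-isolated-vertices property of members of $\mac{L}_{<\alpha}$) gives $\sum_u|T_u|\le k$, while maximality of $|H|$ applied to $(G'-u)-T_u\in\mac{L}_{<\alpha}$ gives $|T_u|\ge k-1$, so $k(k-1)\le k$. The paper's argument is somewhat more economical (no maximality device, a single contradiction), while your counting argument makes the appearance of $K_2$ in the boundary case $k=2$ fall out of the equality analysis; your passage from $G'=H+K_2$ to $G\cong H^*+K_2$ by observing that local complementation acts componentwise and fixes $K_2$ matches the paper.
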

\begin{proof}
  Let $G\in\mac{L}_{<\alpha}\setminus\mac{L}_{\le \alpha}$. Then 
  $G$ has no isolated vertices and
  $\ero(G)\ge \alpha$, so if $\ero(G)>\alpha$, $G$ must have a proper vertex-minor, say $H$, in $\mac{L}_{\le \alpha}$, but then $\ero(H)>\alpha$ so $G\not\in\mac{L}_{<\alpha}$ by definition, a contradiction. On the other hand, by Theorem~\ref{thm:avgvertexminor}, if a graph $G$ with no isolated vertices has average cut-rank $\alpha$ then $G\in\mac{L}_{<\alpha}\setminus\mac{L}_{\le\alpha}$.

Now let $G\in\mac{L}_{\le\alpha}\setminus\mac{L}_{<\alpha}$.
Then $G$ has a proper vertex-minor of average cut-rank at least~$\alpha$, say $H$, which also must have average cut-rank at most $\alpha$.
Thus $\ero(H)=\alpha$, and we may assume that $H\in\mac{L}_{<\alpha}$ by deleting isolated vertices.
Since $H$ is a proper vertex-minor of $G$, there is some $G'\in\mac{L}_{\le\alpha}$ locally equivalent to $G$ so that $H$ is a proper induced subgraph of $G'$.
Let $V(G)\setminus V(H)=\{v_0,\ldots,v_k\}$ where $k\ge0$ and $H':=G'-v_0$. We may assume that $k\ge 1$, because otherwise the lemma holds.
Because $H'$ is a proper vertex-minor of $G$ and contains $H\in\mac{L}_{<\alpha}$ as an induced subgraph, we have $\ero(H')=\ero(H)=\alpha$.
Then by Theorem~\ref{thm:avgvertexminor}, $\{v_1,\ldots,v_k\}=V(H')\setminus V(H)$ consists of isolated vertices in $H'$.
Hence, since $G'\in\mac{L}_{\le\alpha}$ has no isolated vertices, $G'[\{v_0,\ldots,v_k\}]$ is an attached star in $G'$ of size $k+1$ where $v_0$ is the central vertex.

If $v_0$ is isolated in $G'-v_1$, then $k=1$ and $G'[\{v_0,v_1\}]$ is a component of size $2$ in $G'$ and $G'$ is isomorphic to $H+K_2$.
Then $G$ is isomorphic to $H''+K_2$ where $H''$ is locally equivalent to $H$. 

If $v_0$ is not isolated in $G'-v_1$, then $G'-v_1$ has no isolated vertices and contains $H$ as a proper induced subgraph.
This implies, again by Theorem~\ref{thm:avgvertexminor}, that $\alpha=\ero(H)<\ero(G'-v_1)$, contradicting the minimality of $G$.

\end{proof}
We remark that if $\alpha$ is a positive integer, both $\mac{L}_{\le\alpha}\setminus\mac{L}_{<\alpha}$ and $\mac{L}_{<\alpha}\setminus\mac{L}_{\le\alpha}$ are nonempty. For instance,  $(2\alpha-1)K_2+K_{1,2}$ belongs to $\mac{L}_{\le\alpha}\setminus\mac{L}_{<\alpha}$ and $2\alpha K_2$ belongs to $\mac{L}_{<\alpha}\setminus\mac{L}_{\le\alpha}$.

To finish the proof of Theorem~\ref{thm:main4} we need one more lemma.
\begin{lemma}\label{lemma:vepplusn}If $\vep>0$ or $n\ge1$, then every forest $F$ in $\mac{F}_{\vep+n}\setminus\mac{L}_{<\vep+n}$ has a leaf, say $v$, whose deletion yields a forest, say $H$, in $\mac{L}_{<\vep+n}$ of average cut-rank exactly $\vep+n$. Moreover, if $v$ belongs to an equivalence class of size $2$ of $(V(F),\equiv_F)$ and its unique neighbor has degree $2$ in $F$ then $|\pi(H)|=n$; otherwise $|\pi(H)|=n+1$.
\end{lemma}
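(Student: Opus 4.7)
The plan is to extract a proper vertex-minor $H'$ of $F$ via Lemma~\ref{lem:setminus}, argue via Bouchet's lemma that $H'$ can be realized as $F-v$ for some leaf $v$, and then identify $v$ and determine $|\pi(H)|$ by tracing the recursive construction of $\mac{F}_{\vep+n}$. Since $\vep+n>0$, Corollary~\ref{cor:minimal} gives $F\in\mac{L}_{\le\vep+n}$, so $F\in\mac{L}_{\le\vep+n}\setminus\mac{L}_{<\vep+n}$, and Lemma~\ref{lem:setminus} yields a proper vertex-minor $H'$ of $F$ in $\mac{L}_{<\vep+n}$ with $\ero(H')=\vep+n$ and $|F|-|H'|\le2$.

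First I would rule out $|F|-|H'|=2$: in that case Lemma~\ref{lem:setminus} forces $F\cong H'+K_2$, so $\ero(F)=\vep+n+\tfrac{1}{2}$; but the last step of the recursive construction together with Lemma~\ref{lemma:attach} (when $n$ is odd, with $F=F_0+K_{1,q_1(F_0)}$) or Proposition~\ref{prop:falsetwins} combined with Corollary~\ref{cor:larger} (when $n$ is even, with $F=(F_0+K_{1,q_1(F_0)})+_u K_{1,q_2}$) yields $\ero(F)\le\vep+n+2^{-q}$ for some $q\in\{q_1(F_0),q_2\}$ with $q\ge2$, whence $\ero(F)\le\vep+n+\tfrac{1}{4}$, a contradiction. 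So $|F|-|H'|=1$. By Proposition~\ref{prop:elementary}, $H'$ is locally equivalent to one of $F-v$, $(F*v)-v$, $(F\wedge uv)-v$. Since $F-v$ is a forest, Lemma~\ref{lemma:forests} implies every forest locally equivalent to $F-v$ is isomorphic to it, so I may replace $H'$ with $H:=F-v$; in particular $H\in\mac{L}_{<\vep+n}$ has no isolated vertex and $\ero(H)=\vep+n$.

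To conclude, I would show $v$ may be taken as a leaf and determine $|\pi(H)|$ via the recursive construction. In the odd case $F=F_0+K_{1,q_1(F_0)}$, the equation $\ero(F-v)=\vep+n$ combined with the additive behavior of $\ero$ on disjoint unions (Proposition~\ref{prop:avgdecompose}) and the defining inequality for $q_1(F_0)$ force $v$ to be a leaf of $K_{1,q_1(F_0)}$, so $H=F_0+K_{1,q_1(F_0)-1}$; then $v$'s equivalence class in $F$ is the whole $K_{1,q_1(F_0)}$, of size $\ge3$ by Lemma~\ref{lemma:attach}, which gives the ``otherwise'' case with $|\pi(H)|=n+1$. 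In the even case $F=(F_0+K_{1,q_1(F_0)})+_u K_{1,q_2}$, either $v$ is a leaf of the attached $K_{1,q_2}$ (again class of size $\ge 3$, so $|\pi(H)|=n+1$), or $u$ was the central vertex of a $K_2$ attached star of $F_0$ and $v$ is the other, leaf, vertex of that $K_2$, in which case $\{u,v\}$ is a class of size $2$ and $u$ has degree $2$ in $F$; deleting $v$ makes $u$ a leaf in $H$ whose unique neighbor then absorbs $u$'s class (by Proposition~\ref{prop:equivcut}), merging two classes and yielding $|\pi(H)|=n$. The main obstacle is this recursive case analysis, which requires carefully verifying in each subcase that the chosen $v$ realizes $\ero(F-v)=\vep+n$ exactly and tracking the class mergings via Proposition~\ref{prop:equivcut}.
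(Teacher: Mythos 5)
Your proposal follows the same skeleton as the paper (invoke Lemma~\ref{lem:setminus}, rule out $\abs{F}-\abs{H'}=2$, apply Proposition~\ref{prop:elementary}, then determine $\abs{\pi(H)}$), and your alternative argument for ruling out $\abs{F}-\abs{H'}=2$ via the bound $\ero(F)\le\vep+n+2^{-q}$ with $q\ge2$ is a clean and valid variant for $n\ge1$ (though you do not address the $n=0$, $\vep>0$ case, where the recursive formula does not apply — there $F$ is a connected star so $F\cong H'+K_2$ forces $F\cong K_2$ and $H'$ empty, a contradiction). However, there is a genuine gap at the heart of the argument, in the step where you pass from Proposition~\ref{prop:elementary} to ``$H:=F-v$''.

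Proposition~\ref{prop:elementary} gives that $H'$ is locally equivalent to \emph{one} of $F-v$, $(F*v)-v$, $(F\wedge uv)-v$, for the specific vertex $v$ with $V(H')=V(F)\setminus\{v\}$. You then assert that, since forests locally equivalent to $F-v$ are isomorphic to $F-v$ (Lemma~\ref{lemma:forests}), you may take $H=F-v$. This does not follow: if $H'$ is locally equivalent to $(F*v)-v$, that graph need not be a forest (if $d_F(v)\ge3$ it contains a triangle), need not be locally equivalent to $F-v$, and its average cut-rank can differ from $\ero(F-v)$ (consider $F=K_{1,3}+K_2$ with $v$ the center of $K_{1,3}$: $\ero(F-v)=\tfrac{1}{2}$ while $\ero((F*v)-v)=\ero(K_3+K_2)=\tfrac{5}{4}$). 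What the paper actually does here is a non-trivial three-way case analysis: when $H'=(F*x)-x$ it argues $d_F(x)\ge2$, picks a \emph{different} vertex $y$, namely a leaf adjacent to $x$ (one exists because every equivalence class of $\equiv_F$ has size $\ge2$), and shows $F-y\cong H'*y$; similarly for $H'=(F\wedge xy)-x$ it chooses $y$ to be a leaf neighbor of $x$ and shows $F-y\cong H'$. Only then is $v$ a leaf and $F-v\in\mac{L}_{<\vep+n}$. Your proof also suffers from a circularity: you declare $F-v\in\mac{L}_{<\vep+n}$ (hence no isolated vertices, hence $\ero(F-v)=\vep+n$) and then use this equation to deduce that $v$ is a leaf; but $F-v\in\mac{L}_{<\vep+n}$ already presupposes $v$ is a leaf.

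The determination of $\abs{\pi(H)}$ via the recursion is also not airtight: in the even case you only consider $v$ being a leaf of the attached $K_{1,q_2}$ or of a size-$2$ attached star of $F_0$, but $v$ could also be a leaf of the disjoint $K_{1,q_1}$ or of some other attached star of $F_0$, and you have not ruled these out. The paper avoids this entirely by arguing directly on the equivalence classes of $\equiv_F$: when $v$ lies in a size-$2$ class and its neighbor $u$ has degree $2$, deleting $v$ turns $u$ into a leaf that merges into its other neighbor's class, decreasing $\abs{\pi}$ by one; in every other case the class structure is unchanged. This is both shorter and complete.
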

\begin{proof}Let $F\in\mac{F}_{\vep+n}\setminus\mac{L}_{<\vep+n}$. By Corollary~\ref{cor:minimal} and Lemma~\ref{lem:setminus}, $F$ has a proper vertex-minor, say~$H'$, of average cut-rank $\vep+n$ such that $H'\in\mac{L}_{<\vep+n}$ and $|F|-|H'|\le2$. Moreover, if $|F|-|H'|=2$ then $H'$ can be chosen so that $F$ is isomorphic to $H'+K_2$, so $F$ has a component of size $2$. In this case, by the construction of $\mac{F}_{\vep+n}$, Lemma~\ref{lemma:attach}, and Corollary~\ref{cor:larger} we deduce that $n\le1$. If $n=0$ then $F$ is isomorphic to $K_2$, so $H'$ is empty, but this is absurd since $\vep>0$ by hypothesis; if $n=1$ then $H'$ is isomorphic to $K_{1,q}$ for some $q\ge1$, a contradiction since $1\le1+\vep=\ero(H')=1-2^{-q}<1$. Thus, $H'$ is an elementary vertex-minor of $F$.

Let $\{x\}=V(F)\setminus V(H')$, then by Proposition~\ref{prop:elementary} we may assume without loss of generality that $H'$ is one of $F-x$, $(F*x)-x$, and $(F\wedge xy)-x$ for any $y$ adjacent to $x$ in $F$.
\begin{enumerate}
\item If $H'=F-x$ then since every equivalence class of $(V(F),\equiv_F)$ has at least two vertices (the construction of $\mac{F}_{\vep}$, Lemma~\ref{lemma:attach}, and Corollary~\ref{cor:larger}) and $H'$ has no isolated vertices, $x$ is necessarily a leaf in~$F$, so we let $v=x$.
\item If $H'=(F*x)-x$ then we may assume that $d_F(x)\ge2$, so if $y$ is a leaf adjacent to $x$ then $F-y$ is isomorphic to $H'*y\in\mac{L}_{<\vep+n}$, and we let $v=y$.
\item If $H'=(F\wedge xy)-x$ then if furthermore $x$ is a leaf in $F$ then $y$ is the unique neighbor of $x$ in~$F$, hence isolated in $H'$, a contradiction. So, $d_F(x)\ge2$, and since $y$ can be chosen to be any neighbor of $x$ in $F$, we may assume that $y$ is a leaf adjacent to $x$. Then $F-y$ is isomorphic to $ H'\in\mac{L}_{<\vep+n}$ and we let $v=y$.
\end{enumerate}

So, we have chosen $v$. Let $H:=F-v$ and $u$ be the unique neighbor of $v$ in $F$. 
In all cases, $H$ is locally equivalent to $H'$ and therefore $H\in\mac{L}_{<\vep+n}$.
The first part of the lemma is proved.

We come to the second part of the lemma. If $v$ belongs to an equivalence class of size $2$ of $(V(F),\equiv_F)$ and $d_F(u)=2$ then the neighbor of $u$ other than $v$ in $F$, say $w$, has degree at least two in $F$. Let $C$ be the equivalence class of $(V(F),\equiv_F)$ containing $w$, then $C\cup\{u\}$ is an equivalence class of $(V(H),\equiv_H)$. It follows that $|\pi(H)|=|\pi(F)|-1=n$ by Lemma~\ref{lemma:diffweight}.

In the other cases, it is easy to check that $|\pi(H)|=|\pi(F)|=n+1$. This completes the proof of the lemma.
\end{proof}

We are now ready to prove Theorem~\ref{thm:main4}.
\boundforforbiddenvertexminor*
\begin{proof}%
  Choose $c>0$ to be some constant (independent of $\vep$ and $n$) such that
\[ \frac{(2\lfloor n/2\rfloor-1)!!}{n+1}\ge 2^{cn\log (n+1)}\quad\text{for all $n\in\mab{N}$.} \]

First consider the case that $\mac{S}$ is a list of forbidden vertex-minors for 
the class of graphs of average cut-rank at most $\vep+n$. Then $\mac{S}$ is locally equivalent to $\mac{L}_{\le\vep+n}$.
By Corollary~\ref{cor:minimal}, $\mac{F}_{\vep+n}\subseteq\mac{L}_{\le\vep+n}$, and by Lemmas~\ref{lemma:forests}~and~\ref{lemma:isomorphic}, no two distinct forests $F_1$ and $F_2$ in $\mac{F}_{\vep+n}$ 
are locally equivalent up to isomorphisms.
Therefore, for every forest $F$ in $\mac{F}_{\vep+n}$, there is some member in $\mac{S}$ which is isomorphic to a graph locally equivalent to $F$ and these members are pairwise not locally equivalent to each other. By Corollary~\ref{cor:cardinality},
\begin{equation}\label{eq:19}
|\mac{S}|\ge |\mac{F}_{\vep+n}|=\left(2\lr\frac{n}{2}\rr-1\right)!!\ge 2^{cn\log (n+1)}.
\end{equation}

Now consider the case that $\mac{S}$ is a list of forbidden vertex-minors for the class of graphs of average cut-rank smaller than $\vep+n$. 
Then $\mac{S}$ is locally equivalent to $\mac{L}_{<\vep+n}$.
We may assume that $\vep+n>0$. Let $\{F_1,\ldots,F_m\}=\mac{F}_{\vep+n}\setminus\mac{L}_{<\vep+n}$.

For every $j=1,\ldots,m$, by Lemma~\ref{lemma:vepplusn}, $F_j$ has a leaf, whose deletion yields a forest in $\mac{L}_{<\vep+n}$, say~$H_j$, of average cut-rank exactly $\vep+n$. Moreover, $|\pi(H_j)|$ is either $n$ or $n+1$ depending on the condition written in the statement of Lemma~\ref{lemma:vepplusn}.
\begin{claim}For every $j\in\{1,\ldots,m\}$, there are, up to isomorphism, at most $n+1$ forests $F$ such that there is some leaf in $F$ whose deletion yields $H_j$.
\end{claim}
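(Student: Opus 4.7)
The plan is to encode each candidate $F$ up to isomorphism by its weighted quotient graph via Lemma~\ref{lem:idenforests}, and to count the possible $\pi(F)$'s as local modifications of $\pi(H_j)$. Since any such $F$ is obtained from $H_j$ by attaching a new leaf $v$ to some vertex $u\in V(H_j)$, the key step is to track how $\equiv_F$ differs from $\equiv_{H_j}$ around $u$. In the intended application $F$ lies in $\mac{F}_{\vep+n}$, so Lemma~\ref{lemma:diffweight} fixes $|\pi(F)|=n+1$, which severely restricts which modifications of $\pi(H_j)$ can actually occur.

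Setting $C:=[u]_{H_j}$, I would first verify that $\equiv_F$ agrees with $\equiv_{H_j}$ outside $C\cup\{v\}$, so only the class $C$ is affected by the attachment. This produces two cases. In Case A, when $u$ plays the role of centre in $C$ (this covers singleton classes and $K_2$-component classes), one has $[u]_F=C\cup\{v\}$, so $\pi(F)$ equals $\pi(H_j)$ with the weight of $C$ incremented by one, giving $|\pi(F)|=|\pi(H_j)|$. In Case B, when $u$ is a pendant leaf of $C$ with $|C|\ge 2$ and $C$ is not a $K_2$ component, the vertex $u$ becomes internal in $F$ and $[u]_F=\{u,v\}$ is a new equivalence class of weight two, adjacent to the residual class $C\setminus\{u\}$; hence $|\pi(F)|=|\pi(H_j)|+1$.

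Combining $|\pi(F)|=n+1$ with the dichotomy $|\pi(H_j)|\in\{n,n+1\}$ from Lemma~\ref{lemma:vepplusn}, exactly one of the two cases is compatible with the given $H_j$. If $|\pi(H_j)|=n+1$, only Case A can occur, and the number of non-isomorphic weighted quotients it produces is at most $|\pi(H_j)|=n+1$, indexed by the equivalence class of $H_j$ whose weight is incremented. If $|\pi(H_j)|=n$, only Case B can occur, and the count is at most $|\pi(H_j)|=n<n+1$, indexed by the class whose weight is decremented. In either case, at most $n+1$ non-isomorphic forests $F$ arise, so Lemma~\ref{lem:idenforests} yields the claim.

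The main obstacle is the careful local analysis of $\equiv_F$ versus $\equiv_{H_j}$ around $u$. One must verify that no new equivalences appear between $u$ (or $v$) and any vertex outside $C$, and that in Case B the residual set $C\setminus\{u\}$ remains intact as an equivalence class of $F$. The borderline behaviour of $K_2$-component classes, where attaching a leaf to either endpoint of the $K_2$ yields the same $F$, also needs to be handled explicitly so that Case A does not overcount. Once this equivalence-class bookkeeping is made rigorous, the counting above finishes the proof.
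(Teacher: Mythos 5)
Your proposal is correct and follows essentially the same case analysis as the paper's: joining the new leaf to the center of an equivalence class leaves $|\pi(F)|$ unchanged while joining it to a non-central leaf of a non-$K_2$ class increases $|\pi(F)|$ by one, and the constraint $|\pi(F)|=n+1$ (from Lemma~\ref{lemma:diffweight}) together with $|\pi(H_j)|\in\{n,n+1\}$ (from Lemma~\ref{lemma:vepplusn}) forces exactly one case, yielding at most $|\pi(H_j)|\le n+1$ isomorphism types. You make explicit the restriction to $F$ with $|\pi(F)|=n+1$ that the paper's phrasing of the claim leaves implicit but its proof (and application) clearly relies on, which is a helpful clarification.
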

\begin{subproof}There are two cases to consider.
\begin{enumerate}
\item $|\pi(H_j)|=n$. The only way to obtain $F$ from $H_j$ is to add a new vertex to $H_j$ and join it to some leaf in $H_j$ (to create a new equivalence class of size $2$). Because $(V(H_j),\equiv_{H_j})$ has $n$ equivalence classes, each of which induces an attached star in $H_j$, there are at most $n$ forests $F$ satisfying the claim.
\item $|\pi(H_j)|=n+1$. The only way to obtain $F$ from $H_j$ is to add a new vertex to $H_j$ and join it to the central vertex of some equivalence class of $(V(H_j),\equiv_{H_j})$. Because there are $n+1$ such equivalence classes, there are thus at most $n+1$ forests $F$ satisfying the claim.
\end{enumerate}
Hence there are at most $n+1$ desired forests $F$, completing the proof of the claim.
\end{subproof}

Let $\mac{G}$ be a graph on the vertex set $\{1,\ldots,m\}$ such that for distinct $j,k\in\{1,\ldots,m\}$, $jk\in E(\mac{G})$ if $H_{j}$ is isomorphic to a graph locally equivalent to $H_{k}$. For $j\in\{1,\ldots,m\}$, by Lemma~\ref{lemma:forests}, $k\in N_{\mac{G}}(j)$ if and only if $H_j$ is isomorphic to $ H_k$, implying that there is some forest $F_k'$ isomorphic to $F_k$ such that $H_j$ can be obtained by deleting some leaf of $F_k'$. Because the set $\{F_1,\ldots,F_m\}$ consists of pairwise nonisomorphic forests, by Lemma~\ref{lemma:isomorphic}, so does the set $\{F_k':k\in N_{\mac{G}}(j)\}\cup\{F_j\}$. It follows by the claim that $d_{\mac{G}}(j)\le n$ for all $j=1,\ldots,m$.

Let $S$ be a maximal independent set in~$\mac{G}$. Then every vertex outside of $S$ is adjacent in $\mac{G}$ to some vertex in $S$ whose degree is at most~$n$. Hence $m=|\mac{G}|\le |S|+n|S|$, or equivalently $|S|\ge \frac{m}{n+1}$.

Let $\mac{T}$ be the disjoint union of $\mac{F}_{\vep+n}\cap\mac{L}_{<\vep+n}$ and $\{H_j:j\in S\}$. Since $S$ is an independent set in $\mac{G}$, for every distinct $j,k\in S$ we have $H_j$ is not isomorphic to a graph locally equivalent to $H_k$. This implies, from our construction, that $\mac{T}\subseteq\mac{L}_{<\vep+n}$ and no two distinct graphs in $\mac{T}$ are locally equivalent to each other up to isomorphisms. Furthermore, no two distinct forests in $\mac{F}_{\vep+n}\cap \mac{L}_{<\vep+n}$ are locally equivalent to each other up to isomorphisms. Therefore, by \eqref{eq:19},
\begin{equation*}
|\mac{S}|\ge|\mac{T}|=|\mac{F}_{\vep+n}\cap\mac{L}_{<\vep+n}|+|S|\ge |\mac{F}_{\vep+n}|-m+\frac{m}{n+1}\ge \frac{|\mac{F}_{\vep+n}|}{n+1}=\frac{(2\lfloor n/2\rfloor-1)!!}{n+1}\ge 2^{cn\log (n+1)},
\end{equation*}
and the theorem is completely proved.
\end{proof}

\section{Graphs of average cut-rank at most $3/2$}
\label{sec:3/2}
We now aim to prove Theorem~\ref{thm:3/2}.
Our plan is to bound the number of connected components and investigate the maximum induced path of every graph locally equivalent to a fixed graph. This approach not only characterizes graphs of average cut-rank at most $3/2$ but also reveals $\mac{L}_{<1},\mac{L}_{\le 1},\mac{L}_{<3/2},\mac{L}_{\le 3/2}$ up to local equivalence.
For every graph $G$, we denote by $p(G)$ the maximum length of a path graph which is a vertex-minor of $G$.

Recall that for every $k\ge0$, $E_k$ is the graph $K_{1,k+1}$ with one edge subdivided. The following lemma computes $\ero(E_k)$, which explains why $3/2-3/2^{k+2}$ appears in Theorem~\ref{thm:3/2}.
We omit its easy proof.
\begin{lemma}\label{lem:ek}For all $k\ge0$, we have
\( \ero(E_k)=\frac{3}{2}-\frac{3}{2^{k+2}}\).
\end{lemma}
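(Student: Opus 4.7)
My plan is to compute $\ero(E_k)$ by a direct case analysis, organized so as to exploit the false-twin symmetry among the $k$ leaves that hang at the center of $E_k$. Label the vertices of $E_k$ so that $c$ is the center of the original $K_{1,k+1}$, $u_1, \ldots, u_k$ are the unsubdivided leaves adjacent to $c$, and $c$--$m$--$\ell$ is the subdivided edge (so $m$ is the new vertex and $\ell$ is the leaf at the end). Since every $u_i$ has neighborhood $\{c\}$ in $E_k$, the vertices $u_1, \ldots, u_k$ are pairwise false twins, so $\rho_{E_k}(S)$ depends on $S$ only through $j := \lvert S \cap \{u_1, \ldots, u_k\} \rvert$ and through the three bits recording whether $c$, $m$, $\ell$ lie in $S$. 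Consequently $\sum_{S \subseteq V(E_k)} \rho_{E_k}(S) = \sum_{j=0}^{k} \binom{k}{j} R(j)$, where $R(j)$ denotes the sum of the ranks across the eight ``spine'' patterns.

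For each of the $8$ choices of $(c \in S?, m \in S?, \ell \in S?)$ I will read off the rank of $A_{E_k}[S, V(E_k) \setminus S]$ as a function of $j$. In every case this matrix has at most three potentially nonzero rows --- those of $c$, $m$, $\ell$ that lie in $S$ --- because a row indexed by $u_i \in S$ is either identically zero (when $c \in S$) or equals the indicator of the column $c$ (when $c \notin S$). Hence each of the eight ranks is $0$, $1$, or $2$, directly readable from inspection. Tabulating the eight cases gives
\[
R(j) \;=\; 6 \;+\; 3 \cdot [j > 0] \;+\; 3 \cdot [j < k],
\]
where $[P]$ equals $1$ if $P$ holds and $0$ otherwise.

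Summing over $j$ using $\sum_{j=0}^{k} \binom{k}{j} = 2^k$ and $\sum_{j=0}^{k} \binom{k}{j}[j > 0] = \sum_{j=0}^{k} \binom{k}{j}[j < k] = 2^k - 1$ will yield $\sum_{S} \rho_{E_k}(S) = 12 \cdot 2^k - 6$, and dividing by $2^{\lvert V(E_k) \rvert} = 2^{k+3}$ gives the claimed value $3/2 - 3/2^{k+2}$. The only slightly delicate step is the boundary bookkeeping for $j = 0$ and $j = k$, where a few of the eight cases drop by $1$ (for instance in the pattern $c, m, \ell \in S$ the rank is $1$ for $j < k$ but $0$ when $j = k$ since then $S = V(E_k)$); this is a clerical matter rather than a conceptual obstacle.
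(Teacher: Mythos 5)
Your computation is correct: the eight spine ranks are indeed all at most $2$, the tabulation $R(j)=6+3[j>0]+3[j<k]$ is accurate (including the boundary cases), and $\sum_{j}\binom{k}{j}R(j)=12\cdot 2^{k}-6$ divided by $2^{k+3}$ gives $\tfrac{3}{2}-\tfrac{3}{2^{k+2}}$. The paper omits its proof of this lemma, so there is nothing to compare against; your direct case analysis exploiting the false-twin symmetry of $u_1,\dots,u_k$ is a perfectly natural way to fill it in. One small nit: the phrase ``at most three potentially nonzero rows --- those of $c$, $m$, $\ell$ that lie in $S$'' is slightly misleading, since when $c\notin S$ the nonzero $u_i$-rows also contribute a (single shared) row type; you acknowledge this in the ensuing clause, and the conclusion that each rank is $\le 2$ still holds, but the initial phrasing undercounts the relevant row types.
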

\subsection{Graphs of average cut-rank at most $1$}
We need the following lemma. We leave its easy proof to the readers.
\begin{lemma}\label{lem:cograph}If $G$ is a connected graph having no path of length three as a vertex-minor then $G$ is isomorphic to a star or a complete graph.
\end{lemma}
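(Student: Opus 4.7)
The plan is to prove the contrapositive by induction on $|G|$: if $G$ is connected with $|G|\ge 4$ and is neither a star nor a complete graph, then $P_4$ is a vertex-minor of $G$. The base case $|G|\le 3$ is vacuous, since every connected graph on at most three vertices is either a star or a complete graph.

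The first step, done once and for all, is to check by direct computation that $P_4$, the paw (triangle with a pendant edge), the diamond $K_4-e$, and the cycle $C_4$ all lie in the same local-complementation class. For instance, $P_4 * v_2$ (local complementation at an internal vertex) produces the paw; further local complementations among the four-vertex graphs connect these four graphs into a single orbit. Consequently, it suffices to find, in $G$, an induced four-vertex subgraph isomorphic to one of $P_4$, $C_4$, $K_4-e$, or the paw.

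For the inductive step with $|G|\ge 4$, if $G$ has an induced $P_4$ we are done. Otherwise $G$ is a connected cograph, so by the standard structure theorem $G$ can be written as a join $G=G_1\vee G_2$. If $|G_1|,|G_2|\ge 2$, then since $G$ is not complete at least one factor, say $G_1$, is not complete; picking non-adjacent $u,v\in V(G_1)$ and any pair $x,y\in V(G_2)$ yields an induced $C_4$ (when $x\not\sim y$) or an induced diamond (when $x\sim y$), and either certifies a $P_4$ vertex-minor. In the remaining case $|G_2|=1$, write $G_2=\{x\}$ so that $x$ is universal in $G$. The hypothesis that $G$ is neither complete nor a star forces $G_1$ to be neither complete nor edgeless, and $|G_1|\ge 3$. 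If $G_1$ is connected, we either apply the inductive hypothesis to $G_1\subseteq G$ (when $G_1$ is not a star) or, if $G_1$ is itself a star $K_{1,k}$ with $k\ge 2$, extract an induced diamond on $\{x,c,l_1,l_2\}$ where $c$ is the center and $l_1,l_2$ are two leaves of $G_1$. If $G_1$ is disconnected, $G_1$ being non-edgeless admits an edge $aa'$ in some component, and any vertex $b$ in another component gives an induced paw on $\{a,a',b,x\}$.

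The main obstacle is not any single difficult step but rather the bookkeeping needed to make the case split exhaustive: one must carefully exclude the small degenerate configurations (for example $G_1=mK_1$ giving $G=K_{1,m}$, or $G_1$ complete giving $G$ complete) that are already ruled out by the standing hypothesis. With those corner cases tracked, every branch of the induction produces an induced $C_4$, paw, or diamond, and the orbit computation completes the argument.
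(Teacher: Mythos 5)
The paper omits this proof (it is ``left to the readers''), so there is nothing to compare against directly; your argument is correct and self-contained. You first verify that $P_4$, the paw, the diamond, and $C_4$ all lie in one local-equivalence orbit, so that any one of these as an induced subgraph certifies a $P_4$ vertex-minor; you then run an induction through the cograph join decomposition $G=G_1\vee G_2$, splitting on $\min(|G_1|,|G_2|)\ge 2$ versus a universal vertex, and on whether the non-complete factor is connected. I checked the cases and they are exhaustive given the standing hypothesis ($G$ connected, not a star, not complete, $|G|\ge 4$); in particular the apparent gap where $G_1$ is connected with only $3$ vertices is harmless, since every such $G_1$ is already a star or complete. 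One small remark: the induction is not really needed. Once you are in the universal-vertex case $G=\{w\}\vee G_1$ with $G_1$ neither complete nor edgeless and $|G_1|\ge 3$, pick a non-edge $uv$ in $G_1$ and any third vertex $z$ of $G_1$; depending on $|N_{G_1}(z)\cap\{u,v\}|$ the set $\{u,v,z,w\}$ induces a diamond, a paw, or $K_{1,3}$. The first two are contradictions, so every $z$ misses both $u$ and $v$, forcing $u,v$ to be isolated in $G_1$; applying this to a mixed pair (one endpoint of an edge of $G_1$, one isolated vertex) then kills any edge of $G_1$, contradicting that $G_1$ has an edge. This collapses your recursive call into a one-paragraph argument, but your version is equally valid.
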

\begin{lemma}\label{lemma:1}
  If a graph with no isolated vertices has average cut-rank at most~$1$, then it is isomorphic to a graph locally equivalent to one of $2K_2$ and $K_{1,k}$ for $k\ge1 $. Moreover
$\mac{L}_{<1}$ is locally equivalent to $\{2K_2,K_4\}$ and \(\mac{L}_{\le1}\) is locally equivalent to \(\{3K_2,K_2+P_3,P_4\} \).
\end{lemma}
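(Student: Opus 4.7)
The plan is to prove the characterization of graphs with no isolated vertices and $\ero \le 1$ first, and then derive the obstruction sets $\mac{L}_{<1}$ and $\mac{L}_{\le 1}$ by direct computation combined with a small structural case analysis.

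For the main characterization I would begin by computing $\ero(2K_2)=1$ (via Proposition~\ref{prop:avgdecompose} applied to $K_2+K_2$) and $\ero(P_4)=9/8$ (by enumerating subsets of $V(P_4)$, using the symmetry $\rho_{P_4}(S)=\rho_{P_4}(V(P_4)\setminus S)$). Now suppose $G$ has no isolated vertex and $\ero(G)\le 1$. If $G$ is connected, then $G$ cannot contain $P_4$ as a vertex-minor: otherwise some graph $G'$ locally equivalent to $G$ contains $P_4$ as an induced subgraph, and since $G'$ has no isolated vertex and (when $|G|\ge 5$) a non-isolated vertex outside $V(P_4)$, the strict part of Theorem~\ref{thm:avgvertexminor} yields $\ero(G)=\ero(G')\ge\ero(P_4)+2^{-4}>1$, contradicting the assumption. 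Lemma~\ref{lem:cograph} then forces $G$ to be a star or a complete graph, both locally equivalent to $K_{1,k}$ (via $K_k*v\cong K_{1,k-1}$). If $G$ is disconnected (still with no isolated vertex), it contains $2K_2$ as an induced subgraph, and an analogous strict argument forces $G=2K_2$. The converse direction follows from Lemma~\ref{lem:avgcliques}.

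For the two obstruction sets, I would verify each claimed representative by a direct $\ero$-computation combined with an enumeration of elementary vertex-minors via Proposition~\ref{prop:elementary}. For $\mac{L}_{<1}$: the graph $2K_2$ has $\ero=1$ and every proper vertex-minor lies on at most three vertices, so has $\ero\le 3/4$; the second representative (understood up to local equivalence, noting that $P_4$, $C_4$, the paw, and the diamond $K_4-e$ form one local-equivalence class) has $\ero=9/8$, and each of its elementary vertex-minors is locally equivalent to $P_3$, $K_3$, or $K_1+K_2$, all with $\ero<1$. For $\mac{L}_{\le 1}$: $\ero(3K_2)=3/2$, $\ero(K_2+P_3)=5/4$, and $\ero(P_4)=9/8$ are all larger than $1$ by Proposition~\ref{prop:avgdecompose} and Lemma~\ref{lem:avgcliques}; minimality holds because each elementary vertex-minor is locally equivalent to $2K_2$, a star, or such a graph plus isolated vertices, all with $\ero\le 1$ by the main characterization. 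To see the lists are complete, consider any $H$ in either obstruction set: each elementary vertex-minor has $\ero$ below the relevant threshold, hence (by the main characterization) is locally equivalent to $K_{1,k}$ or $2K_2$ modulo isolated vertices; reversing Proposition~\ref{prop:elementary} to enumerate all $H$ obtainable by adjoining one vertex (possibly after a local complementation or pivot) to such an elementary vertex-minor, and discarding any $H$ either failing the $\ero$-threshold or containing a smaller obstruction as a proper vertex-minor, leaves precisely the listed representatives.

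The main obstacle is this last case analysis: local equivalence collapses several graphs on small vertex sets (as illustrated by the $P_4$-class above), so the enumeration must be carried out up to local equivalence rather than isomorphism. Theorem~\ref{thm:main2} bounds the sizes of the obstructions (at most five or six vertices here), which keeps the analysis finite and tractable.
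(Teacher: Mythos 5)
Your proposal follows the paper's route exactly: Lemma~\ref{lem:cograph} to classify connected graphs without a $P_4$ vertex-minor, the strict inequality in Theorem~\ref{thm:avgvertexminor} to bound the size, and direct $\ero$-computations, so it is essentially the same argument with the details filled in. Two of your numerical observations are in fact \emph{corrections} to what looks like transcription slips in the paper: $\ero(K_2+P_3)=\ero(K_2)+\ero(K_{1,2})=\tfrac12+\tfrac34=\tfrac54$ (the paper's $7/4$ is wrong), and the second representative of $\mac{L}_{<1}$ must lie in the local-equivalence class of $P_4$ (equivalently $C_4$, the paw, or the diamond $K_4-e$), not $K_4$ itself, since $\ero(K_4)=1-2^{-3}=7/8<1$ so $K_4\notin\mac{L}_{<1}$; you are right to flag this. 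One small inaccuracy in your write-up: invoking Theorem~\ref{thm:main2} as giving an upper bound of ``five or six vertices'' for the obstructions at $\alpha=1$ is not quite right---for $\alpha=1$ that theorem gives $x_1(0)=6\cdot 2^{18}$, which is far too weak to make the enumeration ``tractable'' by itself. The tight bound you actually need comes from the structural argument you already give (any graph on $\ge 5$ vertices with $\ero\ge 1$ has a proper $P_4$ or $2K_2$ vertex-minor, and the disconnected cases force at most $3$ components of size $2$ or a component of size $\le 3$), so the appeal to Theorem~\ref{thm:main2} should be dropped in favor of that.
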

\begin{proof}
  It follows easily from Lemma~\ref{lem:cograph} and the following observations: $\ero(2K_2)=1$, $\ero(3K_2)=3/2$, $\ero(K_2+P_3)=7/4$, and $\ero(P_4)=9/8$.
\end{proof}

\subsection{Graphs of average cut-rank at most $3/2$}
Let us start with several technical results whose proofs are left to the interested readers.

\begin{lemma}\label{lemma:five}Let $P$ be an induced path of length $3$ in a graph $G$ and $v$ be a vertex of $G$ outside~$P$ such that $v$ has at least $2$ neighbors in $P$. Then
\begin{itemize}
\item If $v$ is adjacent to both ends of $P$, $G$ contains a cycle of length $5$ as a vertex-minor.
\item Otherwise, $G$ contains a path of length $4$ as a vertex-minor.
\end{itemize}
\end{lemma}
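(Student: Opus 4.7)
The plan is to first reduce to the induced subgraph on $V(P)\cup\{v\}$ and then perform a case analysis on $N_G(v)\cap V(P)$. Write $P=abcd$, so $V(P)=\{a,b,c,d\}$ and the edges of $P$ are $ab,bc,cd$. By deleting every vertex of $G$ outside $V(P)\cup\{v\}$, we obtain a vertex-minor of $G$ on five vertices in which $P$ is still induced and $v$ still has at least two neighbors in $V(P)$; so it is enough to produce $C_5$ or $P_5$ as a vertex-minor of this $5$-vertex graph.

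Suppose first that $v$ is adjacent to both $a$ and $d$. If $N(v)\cap V(P)=\{a,d\}$ then the graph is already isomorphic to $C_5$ via the cycle $v\text{-}a\text{-}b\text{-}c\text{-}d\text{-}v$. If $v$ is additionally adjacent to $b$ (respectively $c$), then $N(a)=\{b,v\}$ (respectively $N(d)=\{c,v\}$), so local complementation at $a$ (respectively $d$) toggles the edge $bv$ (respectively $cv$) off and leaves us with the previous case. If $v$ is adjacent to all four vertices of $P$, applying $*a$ and then $*d$ successively removes $bv$ and $cv$ and reduces to the first subcase. In all situations we obtain a $C_5$ vertex-minor.

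Suppose now that $v$ has at least two neighbors in $V(P)$ but is not simultaneously adjacent to both $a$ and $d$. If $N(v)\cap V(P)=\{a,b\}$, then $a$ has neighbors $\{b,v\}$, so $*a$ toggles $bv$ off and yields the induced path $v\text{-}a\text{-}b\text{-}c\text{-}d$; the case $\{c,d\}$ is symmetric. If $N(v)\cap V(P)=\{b,c\}$, then $*v$ toggles $bc$ off and yields the induced path $a\text{-}b\text{-}v\text{-}c\text{-}d$. If $N(v)\cap V(P)=\{a,c\}$, apply $*a$ (which adds $bv$), then $*v$ (whose neighborhood is now $\{a,b,c\}$, toggling $ab$ off, $ac$ on, $bc$ off), and finally $*a$ again (which toggles $vc$ off); the result is the induced path $b\text{-}v\text{-}a\text{-}c\text{-}d$, and the case $\{b,d\}$ is symmetric. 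The remaining possibilities $N(v)\cap V(P)\in\{\{a,b,c\},\{b,c,d\}\}$ reduce to one of the previous two-neighbor cases after a single local complementation at $a$ or $d$ removing $bv$ or $cv$.

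I expect the only real obstacle to be ensuring the case analysis is complete and the edge-toggle bookkeeping is correct, particularly for the non-consecutive pair $\{a,c\}$, where three local complementations are needed rather than one; everything else is immediate from the induced structure on $V(P)\cup\{v\}$.
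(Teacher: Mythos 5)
Your proof is correct, and the bookkeeping in every subcase checks out — in particular the three-step reduction for $N(v)\cap V(P)=\{a,c\}$ lands exactly on the induced path $b\text{-}v\text{-}a\text{-}c\text{-}d$, and the single-complementation reductions from the $\abs{N(v)\cap V(P)}=3$ cases leave the path $abcd$ induced, so the appeal to the earlier two-neighbor cases is legitimate. The paper itself offers no proof (the preamble to Lemma~\ref{lemma:five} states the proofs of these technical results are ``left to the interested readers''), so there is nothing to compare against; your case analysis over $N(v)\cap V(P)$ on the five-vertex induced subgraph is precisely the kind of routine verification the authors had in mind.
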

\begin{lemma}\label{lem:lcfive}Every graph without isolated vertices on $5$ vertices is isomorphic to a graph locally equivalent to to one of $K_2+P_3$, $K_{1,4}$, $P_5$, $E_2$, and $C_5$.
\end{lemma}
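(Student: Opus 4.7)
\noindent\emph{Proof plan.} My plan is a finite case analysis organized by the parameter $p(G)$ (the length of the longest path that is a vertex-minor of $G$, defined just before this lemma). Both $p(G)$ and $\ero(G)$ are invariant under local equivalence by Proposition~\ref{prop:cutrank}, and the pairwise distinct values
\[
\ero(K_{1,4})=\tfrac{15}{16},\quad \ero(K_2+P_3)=\tfrac{5}{4},\quad \ero(E_2)=\tfrac{21}{16},\quad \ero(P_5)=\tfrac{23}{16},\quad \ero(C_5)=\tfrac{25}{16}
\]
show that the five listed graphs lie in distinct local equivalence classes. Hence it suffices to show every graph $G$ satisfying the hypothesis belongs to one of these classes.

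First I would dispatch the disconnected case. The only partition of $5$ into parts of size at least $2$ is $2+3$, so $G\cong K_2+H$ with $|V(H)|=3$ and $H$ connected; thus $H\in\{P_3,K_3\}$, and since $K_3=P_3*v$ at the central vertex $v$ of $P_3$, $G$ is locally equivalent to $K_2+P_3$.

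For the connected case I would split by $p(G)$. If $p(G)\le 2$, then $G$ has no $P_4$ as a vertex-minor, so by Lemma~\ref{lem:cograph} $G$ is a star or a complete graph, giving $G\in\{K_{1,4},K_5\}$; since $K_5*v=K_{1,4}$, $G$ is locally equivalent to $K_{1,4}$. If $p(G)=4$, then some graph locally equivalent to $G$ contains $P_5$ as an induced subgraph, and because $|V(G)|=5$ that graph must equal $P_5$, so $G$ is locally equivalent to $P_5$.

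The remaining case $p(G)=3$ is the main obstacle. Up to local equivalence I may assume $G$ itself contains an induced $P_4=v_1v_2v_3v_4$, and the fifth vertex $v_5$ must have at least one neighbor in $\{v_1,v_2,v_3,v_4\}$ (otherwise $v_5$ is isolated). If $v_5$ has exactly one such neighbor, then by the reflective symmetry of $P_4$ either that neighbor is $v_1$, in which case $G\cong P_5$ contradicting $p(G)=3$, or it is $v_2$, in which case $G$ is a star $K_{1,3}$ centered at $v_2$ with one edge ($v_2v_3$) subdivided to end at $v_4$, i.e., $G\cong E_2$. If $v_5$ has at least two neighbors in $\{v_1,v_2,v_3,v_4\}$, I invoke Lemma~\ref{lemma:five}: if $v_5$ is not adjacent to both endpoints $v_1$ and $v_4$, then $G$ has $P_5$ as a vertex-minor, again contradicting $p(G)=3$; otherwise $G$ has $C_5$ as a vertex-minor, forcing (since $|V(G)|=5$) $G$ to be locally equivalent to $C_5$. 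This exhausts all cases.
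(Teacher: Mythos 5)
Your proof is correct. The paper itself omits the proof of this lemma (it appears in a block of ``technical results whose proofs are left to the interested readers''), so there is no author proof to compare against; but your case analysis is sound and uses exactly the surrounding tools the paper makes available (Lemma~\ref{lem:cograph}, Lemma~\ref{lemma:five}, and the invariant $p(G)$ defined just before this lemma). A few remarks. The opening paragraph on average cut-ranks establishes that the five target graphs are pairwise non--locally-equivalent; this is not needed for the statement as written, which only asserts membership in \emph{some} one of the five classes, though it is harmless. In the connected case you implicitly use that local complementation preserves connectedness when you replace $G$ by a locally equivalent graph with an induced $P_4$ and still conclude the fifth vertex is non-isolated; this is a standard fact and fine to use, but worth noting. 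The boundary $p(G)\le 4$ holds automatically since $|V(G)|=5$. Finally, in the $p(G)=3$, one-neighbor-is-$v_2$ subcase, your parenthetical ``one edge ($v_2v_3$) subdivided to end at $v_4$'' misnames the subdivided edge: $G$ is obtained from $K_{1,3}$ centered at $v_2$ with leaves $v_1,v_5,v_4$ by subdividing the edge $v_2v_4$ with the new vertex $v_3$. The identification $G\cong E_2$ is still correct. The remainder of the argument, in particular the use of Lemma~\ref{lemma:five} to force either a $P_5$ vertex-minor (contradiction with $p(G)=3$) or a $C_5$ vertex-minor which, by the vertex count, must be all of $G$ up to local equivalence, is exactly right.
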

\begin{lemma}\label{lem:fourandfive}
  Let $G$ be a graph on at most $5$ vertices. If $\ero(G)\ge3/2$, then $G$ is isomorphic to a graph locally equivalent to $C_5$.
\end{lemma}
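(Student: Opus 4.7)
My plan is to reduce to graphs on exactly $5$ vertices with no isolated vertices, apply Lemma~\ref{lem:lcfive} to narrow down to five representatives up to local equivalence, and then compute the average cut-rank of each.

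First I would dispose of the case $|G|\le 4$.  Since $\rho_G(S)\le \min(|S|,|V(G)|-|S|)$, we have
\[ 2^{|G|}\ero(G)\le \sum_{k=0}^{|G|}\binom{|G|}{k}\min(k,|G|-k), \]
which for $|G|=4$ evaluates to $20$, so $\ero(G)\le 5/4<3/2$; the smaller cases $|G|\le 3$ are similar or trivial.  Since adjoining or deleting isolated vertices does not affect the average cut-rank, I may then assume that $G$ has exactly $5$ vertices and no isolated vertex.

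Next I would invoke Lemma~\ref{lem:lcfive}, which tells me that such a $G$ is isomorphic to a graph locally equivalent to one of $K_2+P_3$, $K_{1,4}$, $P_5$, $E_2$, and $C_5$.  Because Proposition~\ref{prop:cutrank} says local complementations preserve the cut-rank function, they also preserve $\ero$, so it suffices to compute the average cut-rank of each representative.  From Lemma~\ref{lem:avgcliques} I get $\ero(K_{1,4})=15/16$, and combined with Proposition~\ref{prop:avgdecompose} I get $\ero(K_2+P_3)=\ero(K_2)+\ero(P_3)=1/2+3/4=5/4$; Lemma~\ref{lem:ek} gives $\ero(E_2)=3/2-3/16=21/16$.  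A direct enumeration over the $32$ subsets of $V(P_5)$ (using the symmetry $\rho_G(S)=\rho_G(V(G)\setminus S)$ to halve the work) should yield $\ero(P_5)=46/32=23/16$, while the cyclic symmetry of $C_5$ reduces the sum to size-$0$, $1$, and $2$ subsets and gives $\ero(C_5)=50/32=25/16$.  As $25/16\ge 3/2 > 23/16$, only the representative $C_5$ satisfies $\ero\ge 3/2$, which completes the proof.

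The main obstacle will really be just the bookkeeping for $\ero(P_5)$ and $\ero(C_5)$, each of which fits in a short table once the symmetries are exploited.  Conceptually no difficulty remains, because all the heavy lifting—classifying $5$-vertex graphs up to local equivalence in Lemma~\ref{lem:lcfive} and computing $\ero(E_k)$ in Lemma~\ref{lem:ek}—has already been done, so the proof ultimately reduces to three small exhaustive computations together with the crude counting bound that handles $|G|\le 4$.
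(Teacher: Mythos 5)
Your proof is correct, and since the paper explicitly leaves the proof of this lemma (along with Lemmas~\ref{lemma:five} and~\ref{lem:lcfive}) to the interested reader, there is no proof in the paper to compare against directly. Your approach is almost certainly the one the authors intended, given that Lemma~\ref{lem:lcfive} appears immediately before this one and is exactly the right reduction tool. I verified the key steps: the crude bound $\rho_G(S)\le\min(\abs{S},\abs{V(G)\setminus S})$ does give $16\,\ero(G)\le\sum_{k=0}^4\binom{4}{k}\min(k,4-k)=20$ for $\abs{G}=4$, hence $\ero(G)\le 5/4$; the five representatives from Lemma~\ref{lem:lcfive} have average cut-ranks $15/16$, $5/4$, $23/16$, $21/16$, and $25/16$ respectively (all matching the values that appear elsewhere in the paper, e.g.\ $\ero(P_5)=23/16$ in the proof of Theorem~\ref{thm:3/2} and $\ero(C_5)=25/16$ in Figure~\ref{fig:smallgraphs}); and only $25/16\ge 3/2$. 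One small point of logic that you get right but phrase loosely: after disposing of $\abs{G}\le 4$, if a $5$-vertex $G$ had an isolated vertex $v$, then $\ero(G)=\ero(G-v)\le 5/4$ by the $4$-vertex bound, contradicting $\ero(G)\ge 3/2$ — so the hypothesis of Lemma~\ref{lem:lcfive} (no isolated vertices) is indeed met. The use of Proposition~\ref{prop:cutrank} to conclude that $\ero$ is a local-complementation invariant, and of Proposition~\ref{prop:avgdecompose} for the disjoint union $K_2+P_3$, are both appropriate. This is a clean, complete argument.
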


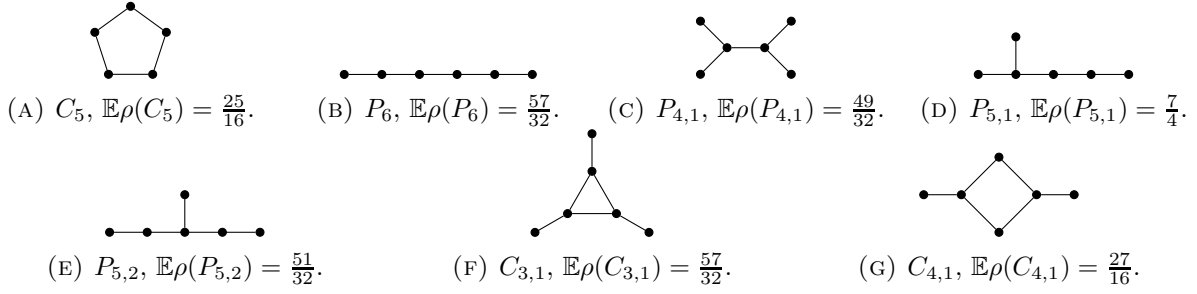
\begin{figure}
  \tikzstyle{v}=[circle, draw, solid, fill=black, inner sep=0pt, minimum width=3pt]
\begin{subfigure}[b]{.24\textwidth}
  \centering
  \begin{tikzpicture}[scale=.5]
    \foreach \j in {0,1,...,4}
    {\node [v] (v\j) at (360/5*\j+18:1){};
    }
	\draw (v0)--(v1)--(v2)--(v3)--(v4)--(v0);
  \end{tikzpicture}
  \caption{$C_5$, $\ero(C_5)=\frac{25}{16}$.}
\end{subfigure}
\begin{subfigure}[b]{.24\textwidth}
  \centering
  \begin{tikzpicture}[scale=.5]
    \foreach \j in {0,1,...,5}
    {\node [v] (v\j) at (\j,0){};
    }
	\draw (v0)--(v5);
  \end{tikzpicture}
  \caption{$P_{6}$, $\ero(P_6)=\frac{57}{32}$.}
\end{subfigure}
\begin{subfigure}[b]{.24\textwidth}
  \centering
  \begin{tikzpicture}[scale=.5]
    \foreach \j in {0,1}
    {\node [v] (v\j) at (\j+1,0){};
    }
	\draw (v0)--(v1);
    \draw (v1)--+(45:1) node[v]{};
	\draw (v1)--+(-45:1) node[v]{};
	\draw (v0)--+(135:1) node[v]{};
	\draw (v0)--+(-135:1) node[v]{};
  \end{tikzpicture}
  \caption{$P_{4,1}$, $\ero(P_{4,1})=\frac{49}{32}$.}
\end{subfigure}
\begin{subfigure}[b]{.24\textwidth}
  \centering
  \begin{tikzpicture}[scale=.5]
    \foreach \j in {0,1,...,4}
    {\node [v] (v\j) at (\j,0){};
    }
	\draw (v0)--(v4);
	\draw (v1)--+(90:1) node[v]{};
  \end{tikzpicture}
  \caption{$P_{5,1}$, $\ero(P_{5,1})=\frac{7}{4}$.}
\end{subfigure}
\begin{subfigure}[b]{.32\textwidth}
  \centering
  \begin{tikzpicture}[scale=.5]
    \foreach \j in {0,1,...,4}
    {\node [v] (v\j) at (\j,0){};
    }
	\draw (v0)--(v4);
	\draw (v2)--+(90:1) node[v]{};
  \end{tikzpicture}
  \caption{$P_{5,2}$, $\ero(P_{5,2})=\frac{51}{32}$.}
\end{subfigure}
\begin{subfigure}[b]{.32\textwidth}
  \centering
  \begin{tikzpicture}[scale=.5]
    \foreach \j in {0,1,2}
    {\node [v] (v\j) at (120*\j+90:0.75){};
    }
	\draw (v0)--(v1)--(v2)--(v0);
	\draw (v0)--+(90:1) node[v]{};
	\draw (v1)--+(-150:1) node[v]{};
	\draw (v2)--+(-30:1) node[v]{};
  \end{tikzpicture}
  \caption{$C_{3,1}$, $\ero(C_{3,1})=\frac{57}{32}$.}
\end{subfigure}
\begin{subfigure}[b]{.32\textwidth}
  \centering
  \begin{tikzpicture}[scale=.5]
    \foreach \j in {0,1,2,3}
    {\node [v] (v\j) at (90*\j:1){};
    }
	\draw (v0)--(v1)--(v2)--(v3)--(v0);
	\draw (v0)--+(0:1) node[v]{};
	\draw (v2)--+(-180:1) node[v]{};
  \end{tikzpicture}
  \caption{$C_{4,1}$, $\ero(C_{4,1})=\frac{27}{16}$.}
\end{subfigure}
\caption{Small graphs and their average cut-rank.}\label{fig:smallgraphs}
\end{figure}
Graphs $C_5$, $P_6$, $P_{4,1}$, $P_{5,1}$, $P_{5,2}$, $C_{3,1}$, and $C_{4,1}$ with their average cut-rank are listed in Figure~\ref{fig:smallgraphs}.
We deduce the following easily.
\begin{corollary}\label{cor:l3/2}The graphs $C_5$, $P_6$, $P_{4,1}$, $P_{5,1}$, $P_{5,2}$, $C_{3,1}$, and $C_{4,1}$ belong to $\mac{L}_{<3/2}\cup\mac{L}_{\le3/2}$.
\end{corollary}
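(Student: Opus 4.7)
The plan is to show, for each of the seven graphs $G$ in the list, that $\ero(G)>3/2$ and that every proper vertex-minor of $G$ has average cut-rank strictly below $3/2$; both together place $G$ in $\mac{L}_{<3/2}\cap\mac{L}_{\le 3/2}$, and hence in their union. The inequality $\ero(G)>3/2$ is read directly from Figure~\ref{fig:smallgraphs}, since each of $25/16$, $57/32$, $49/32$, $7/4$, $51/32$, $57/32$, $27/16$ exceeds $3/2=24/16$.

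For the upper bound on proper vertex-minors, note that since $\abs{G}\le 6$, any proper vertex-minor $H$ has at most five vertices; we may also assume $H$ has no isolated vertices, as deleting them does not change $\ero$. Lemma~\ref{lem:fourandfive} then asserts that $\ero(H)\ge 3/2$ forces $H$ to be locally equivalent to $C_5$, so the task reduces to showing that $C_5$ is not a vertex-minor of $G$. When $G=C_5$ this is immediate because proper vertex-minors have at most four vertices and cannot be locally equivalent to $C_5$. For the remaining six graphs, I would proceed by a bounded case analysis: by Proposition~\ref{prop:elementary}, the elementary vertex-minors obtained by deleting a vertex $v$ are, up to local equivalence, $G-v$, $(G*v)-v$, and $(G\wedge uv)-v$ for a neighbor $u$ of $v$; by Lemma~\ref{lem:lcfive}, each resulting five-vertex graph (with isolated vertices discarded) is locally equivalent to exactly one of $K_2+P_3$, $K_{1,4}$, $P_5$, $E_2$, $C_5$. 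Exploiting the automorphisms of each of $P_6, P_{4,1}, P_{5,1}, P_{5,2}, C_{3,1}, C_{4,1}$ collapses the representative pairs $(v,u)$ to a handful, and one checks in each subcase that the resulting five-vertex graph is locally equivalent to one of $K_2+P_3$, $K_{1,4}$, $P_5$, $E_2$, but never to $C_5$; iterating, $C_5$ is never a vertex-minor.

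The main obstacle is purely organisational, namely keeping the enumeration systematic and brief. A convenient shortcut is that $\ero(C_5)=25/16$ is strictly larger than $\ero(H)$ for any of the other four named five-vertex graphs, so in each subcase it is enough to verify that the computed elementary vertex-minor has strictly smaller average cut-rank than $25/16$ (or differs from $C_5$ in an elementary invariant such as edge count or degree sequence).
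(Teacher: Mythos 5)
Your strategy is correct and is exactly what the paper intends (the paper offers no explicit proof, saying only \dd{}we deduce the following easily\ee{} after Figure~\ref{fig:smallgraphs} and Lemma~\ref{lem:fourandfive}). The structure is right: $\ero(G)>3/2$ for each listed graph is read off Figure~\ref{fig:smallgraphs}; any proper vertex-minor $H$ with no isolated vertices has $\abs{H}\le5$, so by Lemma~\ref{lem:fourandfive} the only way $\ero(H)\ge 3/2$ can occur is if $H$ is locally equivalent to $C_5$; and since $\ero(C_5)=25/16\neq 3/2$, ruling out a $C_5$ vertex-minor actually gives $\ero(H)<3/2$ for every proper vertex-minor, placing $G$ in $\mac{L}_{<3/2}\cap\mac{L}_{\le3/2}$.

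The one genuinely open step in your write-up is the verification that $C_5$ is not a vertex-minor of $P_6,P_{4,1},P_{5,1},P_{5,2},C_{3,1},C_{4,1}$, which you defer to a bounded case analysis via Proposition~\ref{prop:elementary} and Lemma~\ref{lem:lcfive}; that would indeed work, but it is somewhat laborious. A cleaner argument, avoiding the enumeration entirely: five of these six graphs are trees and $C_{4,1}$ is distance-hereditary, so all six have rank-width $1$, while $C_5$ has rank-width $2$; since rank-width does not increase under vertex-minors~\cite{oum2005}, $C_5$ cannot be a vertex-minor of any of them. (For $C_5$ itself, any proper vertex-minor has at most $4$ vertices, so again Lemma~\ref{lem:fourandfive} forbids $\ero\ge 3/2$.) Your proposed shortcut via comparing $\ero$-values is sound but does not save much work, since computing the average cut-rank of each elementary vertex-minor is about as hard as classifying it up to local equivalence; the rank-width observation is the real shortcut.
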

The following lemma is a major step toward the proof of Theorem~\ref{thm:3/2}.

\begin{lemma}\label{lemma:3/2}
  If a graph without isolated vertices has average cut-rank at most $3/2$, then 
  it is isomorphic to a graph locally equivalent to one of $P_5$, $3K_2$, $2P_3$, $K_{1,k+1}$, $K_2+K_{1,k+1}$, and $E_k$ for $k\ge0$. Moreover
\begin{align*}
\mac{L}_{<3/2}&\simeq\{3K_2,K_2+P_3,2P_3,C_5,P_{6},P_{4,1},P_{5,1},P_{5,2},C_{3,1},C_{4,1}\},\\
\mac{L}_{\le 3/2}&\simeq\{4K_2,2K_2+P_3,K_2+P_4,P_3+K_{1,3},P_3+P_4,C_5,P_{6},P_{4,1},P_{5,1},P_{5,2},C_{3,1},C_{4,1}\}.
\end{align*}
\end{lemma}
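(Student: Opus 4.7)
The plan is to split based on the number of connected components of $G$. By Proposition~\ref{prop:avgsmallest}, every connected graph on at least two vertices has average cut-rank at least $1/2$, with equality iff it is $K_2$; combined with Proposition~\ref{prop:avgdecompose}, this bounds the number of components of $G$ by three. In the three-component case, all components must achieve equality $\ero = 1/2$, forcing $G = 3K_2$. In the two-component case, the smaller component has average cut-rank at most $3/4$, which combined with Lemma~\ref{lem:avgcliques} restricts it to being locally equivalent to $K_2$ or $P_3$; applying Lemma~\ref{lemma:1} to the other component then produces $G \sim 2P_3$ or $G \sim K_2 + K_{1,k+1}$.

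The connected case is more involved. If $G$ has no $P_4$ as a vertex-minor, Lemma~\ref{lem:cograph} shows $G$ is locally equivalent to $K_{1,k+1}$ (using that $K_n$ is locally equivalent to $K_{1,n-1}$). Otherwise, fix a representative $G'$ in the local equivalence class of $G$ containing an induced path $P$ of maximum possible length over all representatives. Corollary~\ref{cor:l3/2} supplies forbidden vertex-minors $C_5, P_6, P_{4,1}, P_{5,1}, P_{5,2}, C_{3,1}, C_{4,1}$, which strongly constrain how any vertex outside $P$ may attach to it. Lemma~\ref{lemma:five} rules out more than one neighbor on $P$ without producing either $P_6$ or $C_5$; combined with configurations excluded by $P_{5,1}$ and $P_{5,2}$, the case $|V(P)| \ge 6$ is impossible, the case $|V(P)| = 5$ leaves only $G' \cong P_5$ (also using Lemma~\ref{lem:lcfive}), and the case $|V(P)| = 4$ forces every extra vertex to attach as a leaf at a single fixed vertex of the path, placing $G$ inside some $E_k$.

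For the two explicit obstruction lists, each listed graph is verified directly to lie in $\mac{L}_{<3/2}$ or $\mac{L}_{\le 3/2}$ by computing its average cut-rank (via Lemma~\ref{lem:avgcliques}, Lemma~\ref{lem:ek}, and Figure~\ref{fig:smallgraphs}) and by checking that each of its elementary vertex-minors has average cut-rank strictly below the relevant threshold using Theorem~\ref{thm:avgvertexminor}. Completeness of the list for $\mac{L}_{<3/2}$ is then a finite search: by the first part of the lemma, every forest member of $\mac{L}_{<3/2}$ is locally equivalent to a proper vertex-minor of one of $P_5, 3K_2, 2P_3, K_{1,k+1}, K_2+K_{1,k+1}, E_k$, and such proper vertex-minors realize exactly $3K_2, K_2+P_3, 2P_3$; the remaining members of $\mac{L}_{<3/2}$ are the non-forest obstructions from Figure~\ref{fig:smallgraphs}. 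The passage to $\mac{L}_{\le 3/2}$ follows from Lemma~\ref{lem:setminus}: elements of $\mac{L}_{\le 3/2}\setminus\mac{L}_{<3/2}$ arise from forest members of $\mac{L}_{<3/2}$ of average cut-rank exactly $3/2$ by appending one extra leaf to an attached star or a disjoint $K_2$, giving the additional obstructions $4K_2, 2K_2+P_3, K_2+P_4, P_3+K_{1,3}, P_3+P_4$.

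The main obstacle is the connected $P_4$-vertex-minor case. The difficulty is that forbidding a graph as a vertex-minor is strictly stronger than forbidding it as an induced subgraph, so one cannot simply inspect $G'$ for induced copies of the obstructions; instead, one must verify that no short sequence of local complementations, pivots, and vertex deletions applied to $G'$ produces any of the forbidden graphs. After fixing a representative with a longest induced path, this requires a careful enumeration of attachment patterns of outside vertices, invoking Lemma~\ref{lemma:five} to handle multi-neighbor attachments and occasional single local complementations or pivots to reveal hidden induced copies of $P_{4,1}, C_{3,1}, C_{4,1}, P_{5,1}, P_{5,2}$, or $P_6$.
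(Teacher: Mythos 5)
Your overall strategy — bounding the number of components via Propositions~\ref{prop:avgsmallest} and \ref{prop:avgdecompose}, reducing the two-component case to Lemma~\ref{lemma:1}, and in the connected case choosing a representative with a longest induced path and using the small graphs of Figure~\ref{fig:smallgraphs} to constrain attachments — is essentially the paper's strategy, and the first part of the lemma goes through along your lines once the $|G|\le 5$ base cases (Lemmas~\ref{lem:lcfive}, \ref{lem:fourandfive}) and the detailed attachment enumeration (which you sketch but do not carry out) are filled in.

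However, your argument for completeness of the two obstruction lists has a genuine gap. You claim that ``every forest member of $\mac{L}_{<3/2}$ is locally equivalent to a \emph{proper} vertex-minor of one of $P_5, 3K_2, 2P_3, K_{1,k+1}, K_2+K_{1,k+1}, E_k$.'' This cannot be right: by definition, every member of $\mac{L}_{<3/2}$ has average cut-rank at least $3/2$, while by Theorem~\ref{thm:avgvertexminor} every proper vertex-minor of any graph in the list has average cut-rank \emph{strictly below} $3/2$ (since the maximum attained in the list is $3/2$, by $3K_2$ and $2P_3$). So no member of $\mac{L}_{<3/2}$ at all is a proper vertex-minor of those graphs. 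Relatedly, you try to dispose of ``the remaining members'' by saying they are ``the non-forest obstructions from Figure~\ref{fig:smallgraphs}'' — but $P_6$, $P_{4,1}$, $P_{5,1}$, $P_{5,2}$ are forests, so your forest/non-forest dichotomy does not partition the list at all, and those four graphs are never accounted for by your argument. The first part of the lemma bounds the class $\{\ero\le 3/2\}$ from above, but by itself says nothing about which minimal graphs lie just above the threshold; the paper avoids this problem by determining the members of $\mac{L}_{<3/2}$ and $\mac{L}_{\le 3/2}$ directly inside each branch of the component/longest-path case analysis (three components, two components, connected with $p(G)=3$ or $p(G)=4$), rather than by a posthoc ``finite search.'' You would need to replace the forest-member claim with that in-case analysis (or, at minimum, separate the $\ero=3/2$ members, which are vertex-minors of the list, from the $\ero>3/2$ members, which must be found by enumerating minimal non-members of the list, using the size bound from Theorem~\ref{thm:main2}).
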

\begin{proof}Let $G$ be a graph such that either $\ero(G)\le3/2$ or $G\in\mac{L}_{<3/2}\cup\mac{L}_{\le3/2}$.
  By Lemmas~\ref{lem:lcfive} and \ref{lem:fourandfive}, we may assume that $\abs{G}>5$.
  It is easy to check that $C_5, P_6, 4K_2, 2K_2+P_3\in \mac{L}_{\le3/2}$ and so we may assume that $G$ has no vertex-minor isomorphic to $C_5$, $P_6$, $4K_2$, $2K_2+P_3$. Thus $G$ has at most $3$ components.

  If $G$ has exactly $3$ components, then $G$ has an induced subgraph isomorphic to $3K_2$ which has average cut-rank $3/2$, and if furthermore $G$ has at least $7$ vertices then it has a vertex-minor isomorphic to $2K_2+P_3$ whose average cut-rank is $7/4$. Hence if $\ero(G)\le3/2$ then $G$ is isomorphic to $3K_2$, if $G\in\mac{L}_{<3/2}$ then $G$ is isomorphic to $3K_2$, and if $G\in\mac{L}_{\le3/2}$ then $G$ is isomorphic to a graph locally equivalent to $2K_2+P_3$.

When $G$ has exactly $2$ components, if every component of $G$ has at least $3$ vertices then $G$ has a vertex-minor isomorphic to $2P_3$ whose average cut-rank is $3/2$, and if furthermore $G$ has at least $7$ vertices then $G$ has a vertex-minor isomorphic to $P_3+K_{1,3}$ or $P_3+P_4$ whose average cut-rank is $13/8$ or $15/8$, respectively; if one component of $G$ has only $2$ vertices then $G=K_2+H$ for some graph $H$ with $\ero(H)=\ero(G)-1/2$. By applying Lemma~\ref{lemma:1} to $H$, we deduce that if $\ero(G)\le3/2$ then $G$ is isomorphic to a graph locally equivalent to $K_2+K_{1,k}$ for some $k\ge1$ or $2P_3$, if $G\in\mac{L}_{<3/2}$ then $2P_3$, and if $G\in\mac{L}_{\le3/2}$ then $G$ is isomorphic to a graph locally equivalent to $P_3+K_{1,3}$ or $P_3+P_4$.

Now we assume that $G$ is connected. By Lemma~\ref{lemma:1}, we may assume that $G$ has average cut-rank larger than $1$, so by Lemma~\ref{lem:cograph}, $p(G)\ge3$.
By applying local complementaions if necessary,
we may assume that $G$ has an induced path of length $p(G)$.

If $p(G)\ge 4$, then let $P=abcde$ be an induced path   of length $4$. Then there is a vertex $v$ outside $P$ adjacent to some vertex of $P$. 
If $v$ is adjacent to $a$, then it is easy to check that 
$G$ has a vertex-minor isomorphic to $C_5$ or $P_6$,  contradicting our assumption.
Thus $v$ is nonadjacent to $a$
and by symmetry, nonadjacent to $e$.
By considering all possible $N(v)\cap\{b,c,d\}$, 
we deduce that $G$ has a vertex-minor isomorphic to 
$P_{5,1}$, $P_{5,2}$, $C_{4,1}$, or $C_{3,1}$.
Hence, if $p(G)=4$, then $\ero(G)>3/2$ and in addition if $G\in\mac{L}_{<3/2}\cup\mac{L}_{\le3/2}$ then $G$ is isomorphic to a graph locally equivalent to one of $P_{5,1}$, $P_{5,2}$, $C_{4,1}$, and $C_{3,1}$, by Corollary~\ref{cor:l3/2}.

If $p(G)=3$ then let $P=abcd$ be an induced path of length $3$.
Then by Lemma~\ref{lem:lcfive}, $S:=N_G(P)\setminus V(P)\ne\emptyset$. Pick $v\in S$. 
If $\{a,d\}\subseteq N_G(v)$ then $G$ is isomorphic to a graph locally equivalent to $C_5$, contradicting our assumption. 
Thus we may assume that $v$ is nonadjacent to $d$. 
If $v$ is adjacent to $a$, then we may apply local complementations to find a vertex-minor isomorphic to $P_5$, contradicting the assumption that $p(G)=3$. 
Thus, $v$ is nonadjacent to $a$.
By the same argument, we deduce that $v$ is adjacent to exactly one of $b$ and $c$.
Hence, each vertex in $S$ should be adjacent to only one of $b$, $c$ in $P$.
If all the vertices in $S$ are pairwise nonadjacent and adjacent to the same among $b$, $c$,
then $G$ is isomorphic to $E_k$ for some $k\ge 1$ and thus $\ero(G)<3/2$.
Otherwise, there are two vertices in $S$,
say $u,v$, being adjacent to each other or adjacent to different vertices in $\{b,c\}$.
In the case $u,v$ are adjacent,
if they are adjacent to the same among $b,c$ then
$P_{5,1}$ is isomorphic to an induced subgraph of $G*u$ which contradicts $p(G)=3$,
otherwise $G\wedge uv$ has an induced path of length $5$,
contradicting the assumption; 
in the case $u,v$ are adjacent to different vertices in $\{b,c\}$,
we only have to check when $uv\nin E(G)$, 
then $G$ is isomorphic to a graph locally equivalent to $P_{4,1}$.
Thus, if $p(G)=3$ and $\ero(G)\le3/2$ then
$G$ is isomorphic to a graph locally equivalent to $E_k$ for some $k\ge0$, 
and if $p(G)=4$ and $G\in\mac{L}_{<3/2}\cup\mac{L}_{\le3/2}$ 
then $G$ is isomorphic to a graph locally equivalent to $P_{4,1}$ by Corollary~\ref{cor:l3/2}.
\end{proof}
\smallgraphs*
\begin{proof}%
  It suffices to combine Proposition~\ref{prop:avgdecompose}, and Lemmas~\ref{lem:ek}, \ref{lem:avgcliques}, \ref{lemma:3/2}, and the fact that $\ero(P_5)=23/16=3/2-1/2^4$.
\end{proof}

\section*{Acknowledgments}
The authors would like to thank the anonymous reviewers for helpful comments.

\end{document}